\numberwithin{equation}{section} \theoremstyle{plain}
\newtheorem{theorem}{Theorem}[section]
\newtheorem{lemma}[theorem]{Lemma}
\newtheorem{corollary}[theorem]{Corollary}
\theoremstyle{definition}
\theoremstyle{remark}
\newcommand{\Det}{\operatorname{Det}}
\newcommand{\Dim}{\operatorname{dim}}
\newcommand{\rank}{\operatorname{rank}}
\newcommand{\Ker}{\operatorname{ker}}
\newcommand{\Tr}{\operatorname{Tr}}
\newcommand{\re}{\operatorname{Re}}
\newcommand{\Aa}{\operatorname{I}}
\newcommand{\Bb}{\operatorname{II}}
\newcommand{\Cc}{\operatorname{III}}
\newcommand{\Dd}{\operatorname{IV}}
\newcommand{\Ee}{\operatorname{V}}
\newcommand{\ddet}{\operatorname{det}}
\newcommand{\Id}{\operatorname{Id}}
\newcommand{\vol}{\operatorname{vol}}
\begin{document}

\title[BFK-gluing formula and the curvature tensors]
{The BFK-gluing formula and the curvature tensors on a $2$-dimensional compact hypersurface}
\author{Klaus Kirsten}

\address{GCAP-CASPER, Department of Mathematics, Baylor University, Waco, TX 76796, USA}

\email{Klaus\_Kirsten@Baylor.edu}

\author{Yoonweon Lee}

\address{Department of Mathematics Education, Inha University, Incheon, 402-751, Korea}

\email{yoonweon@inha.ac.kr}

\subjclass[2000]{Primary: 58J20; Secondary: 14F40}
\keywords{regularized zeta-determinant, BFK-gluing formula, Dirichlet-to-Neumann operator, scalar and principal curvatures, heat trace asymptotics}
\thanks{The first author was supported by the Baylor University Summer Sabbatical and Research Leave Program.
The second author was supported by the National Research Foundation of Korea with the Grant number 2016R1D1A1B01008091}

\begin{abstract}
In the proof of the BFK-gluing formula for zeta-determinants of Laplacians there appears a real polynomial
whose constant term is an important ingredient in the gluing formula.
This polynomial is determined by geometric data on an arbitrarily small collar neighborhood of a cutting hypersurface.
In this paper we express the coefficients of this polynomial in terms of the scalar and principal curvatures of the cutting hypersurface embedded in the manifold when this hypersurface is $2$-dimensional.
Similarly, we express some coefficients of the heat trace asymptotics of the Dirichlet-to-Neumann operator in terms of the scalar and principal curvatures of the cutting hypersurface.
\end{abstract}

\maketitle

\section{Introduction}

\setcounter{equation}{0}

\vspace{0.2 cm}

The zeta-determinant of a Laplacian on a compact oriented Riemannian manifold is a global spectral invariant which plays an important role in geometry, for example in the context of analytic torsion \cite{ray71-7-145},
topology and mathematical physics \cite{eliz95b,Ki1}. However, it is almost impossible to compute the zeta-determinant precisely except for
a few cases and hence the gluing formula of this invariant is meaningful at least for some computational reason.
The gluing formula for the zeta-determinants of Laplacians on a compact oriented Riemannian manifold was proved by Burghelea, Friedlander and Kappeler in \cite{BFK} by using the Dirichlet boundary condition and Dirichlet-to-Neumann operator.
We will call this formula the BFK-gluing formula (see also \cite{Ca,Fo}).
In the proof of this gluing formula there appears a polynomial of degree less than half of the dimension of the underlying manifold \cite{Ca},
whose constant term is an important ingredient of the BFK-gluing formula.
This polynomial is determined by an arbitrarily small collar neighborhood of a cutting hypersurface and it vanishes when the cutting hypersurface is odd dimensional
\cite{KL3}. Moreover in \cite{KL3}, when a collar neighborhood of a cutting hypersurface is isometric to a warped product manifold, this polynomial is computed  precisely in terms of a warping function. The coefficients of this polynomial are closely related to the small time asymptotic expansion of the heat trace of Laplacians and the asymptotic expansion
of $\ln \Det R(\lambda)$ for $\lambda \rightarrow \infty$, where $R(\lambda)$ is a one parameter family of the Dirichlet-to-Neumann operator defined in (\ref{E:1.2}) below.

Motivated by the work of \cite{Gi2}, \cite{Ki1} and \cite{PS}, the authors recently recognized that the coefficients of the small time asymptotic expansion of the heat trace of $R(\lambda)$ and the asymptotic expansion of $\ln \Det R(\lambda)$ for $\lambda \rightarrow \infty$
can be expressed by the scalar curvatures and principal curvatures of a cutting hypersurface in an underlying manifold.
In this paper we are going to use the arguments presented in \cite{PS} to
express the coefficients of these asymptotic expansions by using the curvature tensors of the cutting hypersurface. These results together with those of \cite{Gi2} or \cite{Ki1} enable us to express the coefficients of the polynomial
by those curvature tensors.

In principle, the method presented in this paper works for all dimensions. However, given the complications when
computing the homogeneous symbol of $R(\lambda)$ defined on the cutting hypersurface, when determining the coefficients
of the polynomial we are going to restrict our argument
to the case that the cutting hypersurface is a $2$-dimensional compact Riemannian manifold.

Let $(M, g)$ be an $m$-dimensional compact oriented Riemannian manifold with boundary $\partial M$,
where $\partial M$ may be empty.
We choose a closed hypersurface ${\mathcal N}$ of $M$ such that ${\mathcal N} \cap \partial M = \emptyset$.
We denote by $M_{0}$ the closure of $M - {\mathcal N}$ and by $g_{0}$ the metric on $M_{0}$ induced from $g$ so that
$(M_{0}, g_{0})$ is a compact Riemannian manifold with boundary $\partial M \cup {\mathcal N}^{+} \cup {\mathcal N}^{-}$,
where ${\mathcal N}^{+} = {\mathcal N}^{-} = {\mathcal N}$.
We here note that $M_{0}$ may or may not be connected.
We denote by ${\mathcal E} \rightarrow M$ a vector bundle of rank $r_{0}$ and denote by ${\mathcal E}_{0} \rightarrow M_{0}$
the pull back bundle of ${\mathcal E}$ by the identification map $p : M_{0} \rightarrow M$.
We choose an inner product $(~, ~)$ on ${\mathcal E}$, which
together with the Riemannian metric $g$ gives an inner product $\langle ~ , ~ \rangle$ on $L^{2}({\mathcal E})$.
We also denote by $\Delta_{M}$ a Laplacian on $M$ acting on smooth sections of ${\mathcal E}$
with the Dirichlet boundary condition on $\partial M$ in case of $\partial M \neq \emptyset$.
By a Laplacian we mean a non-negative symmetric differential operator of order $2$ with principal symbol
$\sigma_{L}(\Delta_{M})(x, x_{m}, \xi, \xi_{m}) = \parallel \xi \parallel^{2} + |\xi_{m}|^{2}$, where $(x, x_{m})$ is a coordinate on $M$ and
$(\xi, \xi_{m})$ is a coordinate on $T_{(x, x_{m})}^{\ast}M$.
We also denote by $\Delta_{M_{0}, D}$ the Laplacian on $M_{0}$ induced from $\Delta_{M}$ with the Dirichlet boundary condition on
$\partial M \cup {\mathcal N}^{+} \cup {\mathcal N}^{-}$.
We choose a unit normal vector field $\partial_{x_{m}}$ to ${\mathcal N}$ which points outward on ${\mathcal N}^{+}$ and
inward on ${\mathcal N}^{-}$.
For $0 \leq \lambda \in {\mathbb R}$, we define the Dirichlet-to-Neumann operator
$R(\lambda) : C^{\infty}({\mathcal E}|_{{\mathcal N}}) \rightarrow C^{\infty}({\mathcal E}|_{{\mathcal N}})$ as follows.
For $f \in C^{\infty}({\mathcal E}|_{{\mathcal N}})$, choose $\psi \in C^{\infty}({\mathcal E}_{0})$ satisfying
\begin{eqnarray}   \label{E:1.1}
(\Delta_{M_{0}} +\lambda) \psi = 0, \qquad \psi|_{{\mathcal N}^{+}} = \psi|_{{\mathcal N}^{-}} = f, \qquad \psi|_{\partial M} = 0.
\end{eqnarray}
\noindent
In fact, $\psi$ is given by $\psi = {\widetilde f} - (\Delta_{M_{0}, D} + \lambda)^{-1} (\Delta_{M_{0}} + \lambda) {\widetilde f}$,
where ${\widetilde f}$ is an arbitrary extension of $f$ to $M_{0}$.
We define
\begin{eqnarray}   \label{E:1.2}
R(\lambda) f & = & \left( \nabla_{\partial_{x_{m}}} \psi \right)\big|_{{\mathcal N}^{+}} -
\left( \nabla_{\partial_{x_{m}}} \psi \right)\big|_{{\mathcal N}^{-}}.
\end{eqnarray}
\noindent
It is well known that $R(\lambda)$ is a non-negative self-adjoint elliptic operator of order $1$.

For $\nu = [\frac{m}{2}] + 1$, it was shown in \cite{BFK} (see also \cite{Ca}) that
\begin{eqnarray}    \label{E:1.3}
\frac{d^{\nu}}{d \lambda^{\nu}} \left\{ \ln \Det \left( \Delta_{M} + \lambda \right) -
\ln \Det \left( \Delta_{M_{0}, D} + \lambda \right) \right\} & = &  \frac{d^{\nu}}{d \lambda^{\nu}} \ln \Det R(\lambda),
\end{eqnarray}
\noindent
which implies that there exists a real polynomial $P(\lambda) = \sum_{j=0}^{\nu-1} a_{j} \lambda^{j}$ such that
\begin{eqnarray}   \label{E:1.4}
\ln \Det \left( \Delta_{M} + \lambda \right) -
\ln \Det \left( \Delta_{M_{0}, D} + \lambda \right) & = & \sum_{j=0}^{\nu-1} a_{j} \lambda^{j} +  \ln \Det R(\lambda).
\end{eqnarray}
\noindent
{\it Remark} : If $M_{0}$ has two components, say, $M_{1}$ and $M_{2}$, it is understood that
\begin{eqnarray*}
\ln \Det \left( \Delta_{M_{0}, D} + \lambda \right) & = &
\ln \Det \left( \Delta_{M_{1}, D} + \lambda \right) + \ln \Det \left( \Delta_{M_{2}, D} + \lambda \right).
\end{eqnarray*}
\noindent
It is known that $P(\lambda)$ is determined by some geometric data on an arbitrarily small collar neighborhood $U$ of
${\mathcal N}$, and that $P(\lambda) = 0$ when $\Dim M$ is even \cite{KL3}.
For a smooth function $f : [a, ~ b] \rightarrow {\mathbb R}^{+}$, we consider a warped product manifold $[a, ~ b] \times_{f} {\mathcal N}$.
When $U$ is isometric to $[a, ~ b] \times_{f} {\mathcal N}$ and ${\mathcal E}$ is a trivial line bundle,
$P(\lambda)$ was computed explicitly in terms of a warping function $f$ in \cite{KL1}.

Suppose that for $t \rightarrow 0^{+}$,
\begin{eqnarray}  \label{E:1.5}
\Tr \left( e^{- t \Delta_{M}} - e^{- t \Delta_{M_{0}, D}} \right) \sim \sum_{j=0}^{\infty} c_{j} t^{\frac{-m + j}{2}},
\end{eqnarray}
where $c_{0} = 0$.
As $\lambda \rightarrow \infty$, the left hand side of (\ref{E:1.4}) has the following asymptotic expansion
(Lemma 2.1 in \cite{KL2},  eq. (5.1) in \cite{Vo}):
\begin{eqnarray}    \label{E:1.6}
& & \ln \Det \left( \Delta_{M} + \lambda \right) - \ln \Det \left( \Delta_{M_{0}, D} + \lambda \right) ~ \sim ~
- \sum_{\stackrel{j=0}{j\neq m}}^{N} c_{j} \frac{d}{ds}\left. \left( \frac{\Gamma(s - \frac{m-j}{2})}{\Gamma(s)} \right)\right|_{s=0} \lambda^{\frac{m-j}{2}}   \\
& & \hspace{1.5 cm}  + ~ c_{m} \ln \lambda ~ + ~ \sum_{j=0}^{m-1} c_{j}
\left. \left( \frac{\Gamma(s - \frac{m-j}{2})}{\Gamma(s)} \right)\right|_{s=0} \lambda^{\frac{m-j}{2}} \ln \lambda ~ + ~ O(\lambda^{-\frac{N + 1 - m}{2}}),  \nonumber
\end{eqnarray}
\noindent
 where $N \geq m+1$.
We note that the constant term does not appear in the above expansion.
It was shown in the Appendix of \cite{BFK} that $R(\lambda)$ is an elliptic $\Psi$DO of order 1 with parameter of weight $2$ and that
for $\lambda \rightarrow \infty$,
$\ln \Det R(\lambda)$ has the following asymptotic expansion,
\begin{eqnarray}   \label{E:1.7}
\ln \Det R (\lambda) & \sim & \sum_{j=0}^{\infty} \pi_{j} \lambda^{\frac{m-1-j}{2}} + \sum_{j=0}^{m-1} q_{j} \lambda^{\frac{m-1-j}{2}} \ln \lambda,
\end{eqnarray}
\noindent
where each $\pi_{i}$ and $q_{j}$ can be computed by an integral of some density on ${\mathcal N}$ consisting of the homogeneous symbol of the resolvent of $R(\lambda)$.
The comparison of these two asymptotic expansions with (\ref{E:1.4}) shows that
\begin{eqnarray}    \label{E:1.8}
a_{0} & = & - \pi_{m-1}.
\end{eqnarray}
Let $\{ \phi_{1}, \cdots, \phi_{l} \}$ be an orthonormal basis of $\Ker \Delta_{M}$, where $l = \Dim \Ker \Delta_{M}$.
We define a positive definite symmetric $l \times l$ matrix $A_{0}$ as follows,
\begin{eqnarray}    \label{E:1.9}
d_{ij} & = & \langle \phi_{i}|_{{\mathcal N}}, ~ \phi_{j}|_{{\mathcal N}} \rangle_{{\mathcal N}}, \qquad
A_{0} ~ = ~ \left( d_{ij} \right).
\end{eqnarray}
\noindent
Then, it is known \cite{Le1,MM} that
\begin{eqnarray}   \label{E:1.10}
& & \lim_{\lambda \rightarrow 0^{+}} \ln \Det \left( \Delta_{M_{0}, D} + \lambda \right) ~ = ~ \ln \Det \Delta_{M_{0}, D},  \\
& & \lim_{\lambda \rightarrow 0^{+}} \left( \ln \Det \left( \Delta_{M} + \lambda \right)  - \ln \Det R(\lambda) \right) ~ = ~
\ln \ddet A_{0} + \ln \Det \Delta_{M} - \ln \Det R,   \nonumber
\end{eqnarray}
\noindent
where $R := R(0)$.
This fact leads to the final form of the BFK-gluing formula as follows:
\begin{eqnarray}    \label{E:1.11}
\ln \Det \Delta_{M} - \ln \Det \Delta_{M_{0}, D}  & = & - \pi_{m-1} + \ln \ddet A_{0} + \ln \Det R.
\end{eqnarray}
In this paper we are going to compute the homogeneous symbol of $R(\lambda)$ using the boundary normal coordinate system
on a collar neighborhood of ${\mathcal N}$.
Using this result together with the method presented in \cite{PS}, we are going to express some coefficients of the small time
asymptotic expansion of $\Tr e^{- t R(\lambda)}$ and the asymptotic expansion of $\ln \Det R(\lambda)$ for
$\lambda \rightarrow \infty$ by the scalar curvatures and principal curvatures of ${\mathcal N}$ in $M$.
These results enable us to express the coefficients of $P(\lambda)$ by the scalar and principal curvatures on ${\mathcal N}$.
As a byproduct we can describe the value of $\zeta_{R(\lambda)}(s)$ at $s=0$ by using some coefficient in (\ref{E:1.5}).
In fact, to compute the homogeneous symbol of $R(\lambda)$, we need to perform a long tedious computation.
To avoid further complications, at some point we are going to restrict the argument
to the case that ${\mathcal N}$ is a $2$-dimensional manifold, even though the method of this paper works for any dimension.
\section{The homogeneous symbol of the Dirichlet-to-Neumann operator $R(\lambda)$}
In this section we are going to discuss the homogeneous symbol of the one parameter family $R(\lambda)$ of the Dirichlet-to-Neumann operators using
the so-called boundary normal coordinate system on a collar neighborhood of ${\mathcal N}$.
To compute the asymptotic expansion of $\ln \Det R(\lambda)$, we regard $\lambda$ as a parameter of weight $2$ so that
the principal symbol of $\Delta_{{\mathcal N}} + \lambda$ is
$\sigma_{L}(\Delta_{{\mathcal N}}) + \lambda$. In this case, $\Delta_{{\mathcal N}} + \lambda$ and $R(\lambda)$ are called the elliptic operators with parameter of weight $2$.
For details of elliptic operators with parameter, we refer to the Appendix of \cite{BFK} or \cite{GS} and \cite{Sh}.

Recall that ${\mathcal E} \rightarrow M$ is a vector bundle of rank $r_{0}$ and $\Delta_{M}$ is a Laplacian acting on smooth sections of ${\mathcal E}$.
It is well known \cite{BGV, Gi2} that there exists a connection
$\nabla : C^{\infty}({\mathcal E}) \rightarrow C^{\infty}(T^{\ast}M \otimes {\mathcal E})$ and
a bundle endomorphism $E : {\mathcal E} \rightarrow {\mathcal E}$ such that
\begin{eqnarray}   \label{E:2.1}
\Delta_{M} & = & - \left( \Tr \nabla^{2} + E \right).
\end{eqnarray}
Next, we are going to describe $\Delta_{M}$ by using local coordinates and a local frame for ${\mathcal E}$.
We first choose a normal coordinate $x = (x_{1}, \cdots, x_{m-1})$ on an open neighborhood of $p \in {\mathcal N}$ with $p = (0, \cdots, 0)$.
Let $u$ be the parameter along the unit speed geodesic $\gamma_{x}(u)$ starting from $x$ with the direction of $\partial_{x_{m}}$ on a collar neighborhood of ${\mathcal N}$,
which is outward on ${\mathcal N}^{+}$ and inward on ${\mathcal N}^{-}$.
Then, $( x,u) = (x_{1}, \cdots, x_{m-1}, u)$ gives a local coordinate system, which is called the boundary normal coordinate system. We will use $u=x_m$ when
notationally convenient, in particular in summations considered later.
Since ${\mathcal N}$ is compact, we can choose a uniform constant $\epsilon_{0} > 0$ such that $\gamma_{x}(u)$ is well defined
for any $x \in {\mathcal N}$ and  $- \epsilon_{0} \leq u \leq \epsilon_{0}$. Then,
\begin{eqnarray}      \label{E:2.2}
U_{\epsilon_{0}} & := & \{ (x, x_{m}) \mid  - \epsilon_{0} < x_{m} < \epsilon_{0}, ~  x \in {\mathcal N} ~ \}
\end{eqnarray}
\noindent
is a collar neighborhood of
${\mathcal N}$ in $M$ with ${\mathcal N}$ being exactly the level of $x_{m} = 0$.
We denote
\begin{eqnarray}    \label{E:2.3}
{\mathcal N}_{x_{m}} & := & \{ (x, x_{m}) \mid ~  x \in {\mathcal N} ~ \} ,
\end{eqnarray}
\noindent
which is diffeomorphic to ${\mathcal N}$.
In this coordinate system the metric tensor is given as follows:
\begin{eqnarray}   \label{E:2.4}
g & = & \sum_{\alpha,\beta = 1}^{m-1} g_{\alpha\beta}(x, x_{m}) dx^{\alpha} dx^{\beta} + (dx^{m})^{2},
\end{eqnarray}
\noindent
where $g_{\alpha\beta}(x, 0) = \delta_{\alpha\beta} + O(x^{2})$.
In the following, the Greek indices $\alpha, \beta, \cdots$ are always assumed to run from $1$ to $m-1$
and the Roman indices $i, j, k, \cdots$ are assumed to run from $1$ to $m$.
We denote
\begin{eqnarray}   \label{E:2.5}
\left( g_{\alpha\beta}(x, x_{m}) \right)^{-1} & := & \left( g^{\alpha\beta}(x, x_{m}) \right), \qquad
|g|(x, x_{m}) ~ := ~ \ddet \left( g_{\alpha\beta}(x, x_{m}) \right).
\end{eqnarray}
\noindent
We also denote
\begin{eqnarray}    \label{E:2.6}
\frac{\partial}{\partial x_{k}} g_{ij} = g_{ij,k}, \qquad  \frac{\partial^{2}}{\partial x_{k}^{2}} g_{ij} = g_{ij,kk},   \qquad
\frac{\partial}{\partial x_{k}} g^{ij} = g^{ij,k}, \qquad     \frac{\partial^{2}}{\partial x_{k}^{2}} g^{ij} = g^{ij,kk}.
\end{eqnarray}
\noindent
On $U_{\epsilon_0}$ we choose a local frame $\{ \psi_{1}, \cdots, \psi_{r_{0}} \}$ and fix it throughout this paper, furthermore we denote by
$\omega : C^{\infty} \left( (T^{\ast}M \otimes {\mathcal E})|_{U_{\epsilon_{0}}} \right) \rightarrow C^{\infty}({\mathcal E}|_{U_{\epsilon_{0}}})$ the connection form of $\nabla$ on $U_{\epsilon_0}$
with respect to $\{ \psi_{1}, \cdots, \psi_{r_{0}} \}$, {\it i.e.}
\begin{eqnarray}  \label{E:2.7}
\nabla_{\partial_{x_{j}}} \psi_{k} & = & \omega_{j} \psi_{k}, \qquad \text{where} \qquad  \omega_{j} := \omega(\partial_{x_{j}}).
\end{eqnarray}
\noindent
In the local coordinate $(x, x_{m})$ on $U_{\epsilon_{0}}$, $(\ref{E:2.1})$ is expressed as follows \cite{Gi2}.
For $\phi \in C^{\infty}({\mathcal E})$, we have
\begin{eqnarray}    \label{E:2.8}
\Delta_{M} \phi & = & - \sum_{i,j=1}^m \, g^{ij} \left( \nabla_{\partial_{x_{i}}}  \nabla_{\partial_{x_{j}}} \phi -
\nabla_{\nabla_{\partial_{x_{i}}} \partial_{x_{j}}} \phi \right) - E \phi  \\
& = & - \sum_{i,j=1}^m \, g^{ij} \left\{ (\partial_{x_{i}} + \omega_{i}) (\partial_{x_{j}} + \omega_{j}) -
\sum_{l=1}^m \, \Gamma_{ij}^{l} (\partial_{x_{l}} + \omega_{l})  \right\} \phi - E \phi     \nonumber  \\
& = & \sum_{i,j=1}^m \left\{ - g^{ij} \partial_{x_{i}} \partial_{x_{j}}  + \sum_{l=1}^m\, g^{ij} \Gamma_{ij}^{l} \partial_{x_{l}}
- 2 g^{ij} \omega_{j} \partial_{x_{i}}
- g^{ij} \left( \partial_{x_{i}} \omega_{j} + \omega_{i} \omega_{j} - \sum_{l=1}^m \Gamma_{ij}^{l} \omega_{l} \right)\right\} \phi  - E  \phi . \nonumber
\end{eqnarray}
\begin{lemma}  \label{Lemma:2.1}
In the boundary normal coordinate system given in (\ref{E:2.4}), we have
\begin{eqnarray*}
& (1) &\sum_{i,j=1}^m\left(  - g^{ij} \partial_{x_{i}} \partial_{x_{j}}  + \sum_{l=1}^m \, g^{ij} \Gamma_{ij}^{l} \partial_{x_{l}} - 2 g^{ij} \omega_{j} \partial_{x_{i}} \right)  ~ = ~
- \partial_{x_{m}}^{2} - \left( \frac{1}{2} \sum_{\alpha , \beta =1}^{m-1}\, g^{\alpha\beta} g_{\alpha\beta,m} ~ + 2 \omega_{m} \right) \partial_{x_{m}} \\
&  &\hspace{2.0 cm}  - ~\sum_{\alpha , \beta =1}^{m-1} \left( g^{\alpha\beta} \partial_{x_{\alpha}} \partial_{x_{\beta}} +
 \left( \frac{1}{2} g^{\alpha\beta} (\partial_{x_{\alpha}} \ln |g|) +
(\partial_{x_{\alpha}} g^{\alpha\beta}) \right) \partial_{x_{\beta}} + 2 g^{\alpha\beta} \omega_{\beta} \partial_{x_{\alpha}} \right).  \\
& (2) & - \sum_{i,j=1}^m \, g^{ij} \left( \partial_{x_{i}} \omega_{j} + \omega_{i} \omega_{j} - \sum_{l=1}^m\, \Gamma_{ij}^{l} \omega_{l} \right)  \\
& = &  - \left( \partial_{x_{m}} \omega_{m} + \omega_{m} \omega_{m} + \frac{1}{2} \sum_{\alpha , \beta =1}^{m-1} \, g^{\alpha\beta} g_{\alpha\beta,m} \omega_{m} \right)
-\sum_{\alpha , \beta =1}^{m-1}\, g^{\alpha\beta} \left( \partial_{x_{\alpha}} \omega_{\beta} + \omega_{\alpha} \omega_{\beta} -
\sum_{\gamma =1}^{m-1}\, \Gamma_{\alpha\beta}^{\gamma} \omega_{\gamma} \right).
\end{eqnarray*}
\end{lemma}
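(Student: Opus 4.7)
The plan is to expand both expressions appearing in the second line of (\ref{E:2.8}) by separating, in every double sum over $i, j \in \{1,\ldots,m\}$, three cases: the purely normal contribution $i = j = m$, the mixed contributions in which exactly one of $i, j$ equals $m$, and the purely tangential contributions with $i, j \in \{1,\ldots,m-1\}$. The whole proof is then a matter of matching, term by term, the pieces produced this way with the right-hand sides in (1) and (2). The decisive simplification is that in the boundary normal coordinates (\ref{E:2.4}) the metric is block diagonal, so $g_{mm} \equiv 1$, $g_{\alpha m} \equiv 0$, hence $g^{mm} \equiv 1$, $g^{\alpha m} \equiv 0$, and $\ddet(g_{ij}) = |g|$. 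In particular, every mixed piece vanishes automatically, which cuts the bookkeeping in half.

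The only Christoffel symbols one needs explicitly come from $\Gamma_{ij}^{k} = \tfrac{1}{2} g^{kl}(g_{il,j} + g_{jl,i} - g_{ij,l})$ applied to (\ref{E:2.4}); a brief calculation gives $\Gamma_{mm}^{k} \equiv 0$ for all $k$ and $\Gamma_{\alpha\beta}^{m} = -\tfrac{1}{2} g_{\alpha\beta, m}$, while the symbols $\Gamma_{\alpha\beta}^{\gamma}$ with all Greek indices coincide with those of the induced metric on ${\mathcal N}_{x_m}$ and are never needed in closed form. The second nontrivial ingredient, used only in part (1), is the standard contracted identity $\sum_{\alpha,\beta} g^{\alpha\beta} \Gamma_{\alpha\beta}^{\gamma} = -\frac{1}{\sqrt{|g|}} \partial_{x_{\alpha}}\bigl(\sqrt{|g|}\, g^{\alpha\gamma}\bigr) = -\tfrac{1}{2} g^{\alpha\gamma}\,\partial_{x_{\alpha}} \ln |g| - \partial_{x_{\alpha}} g^{\alpha\gamma}$, which produces precisely the first-order coefficient asserted on the right-hand side of (1).

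For part (1), the term $-\sum g^{ij} \partial_{x_{i}} \partial_{x_{j}}$ splits immediately into $-\partial_{x_{m}}^{2} - \sum_{\alpha\beta} g^{\alpha\beta} \partial_{x_{\alpha}} \partial_{x_{\beta}}$. The term $\sum g^{ij} \Gamma_{ij}^{l} \partial_{x_{l}}$ collapses to $\sum_{\alpha\beta} g^{\alpha\beta} \Gamma_{\alpha\beta}^{l} \partial_{x_{l}}$ (by $\Gamma_{mm}^{l} = 0$ and $g^{\alpha m} = 0$), and then splitting $l = m$ from $l = \gamma$ yields $-\tfrac{1}{2} \sum g^{\alpha\beta} g_{\alpha\beta, m} \partial_{x_{m}}$ together with a tangential divergence to which the contracted identity above applies. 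Finally, $-2 g^{ij} \omega_{j} \partial_{x_{i}}$ splits as $-2 \omega_{m} \partial_{x_{m}} - 2 \sum_{\alpha\beta} g^{\alpha\beta} \omega_{\beta} \partial_{x_{\alpha}}$. Collecting the three contributions reproduces the right-hand side of (1) verbatim.

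For part (2), the case $i = j = m$ contributes $-\partial_{x_{m}} \omega_{m} - \omega_{m}^{2}$ because $\Gamma_{mm}^{l} = 0$, the mixed cases vanish by $g^{\alpha m} = 0$, and the tangential case contributes $-\sum g^{\alpha\beta}\bigl(\partial_{x_{\alpha}} \omega_{\beta} + \omega_{\alpha} \omega_{\beta} - \sum_{l} \Gamma_{\alpha\beta}^{l} \omega_{l}\bigr)$. Separating $l = m$ from $l = \gamma$ inside this last sum and using $\Gamma_{\alpha\beta}^{m} = -\tfrac{1}{2} g_{\alpha\beta, m}$ produces exactly the extra term $-\tfrac{1}{2} \sum g^{\alpha\beta} g_{\alpha\beta, m} \omega_{m}$ grouped with the normal part on the right-hand side of (2). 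Nothing in either part is more delicate than careful index bookkeeping; the only place where a non-routine identity is invoked is the contracted Christoffel formula in part (1), so the true ``obstacle'' is simply keeping straight the sign conventions and the distinction between indices ranging over $\{1,\ldots,m-1\}$ and $\{1,\ldots,m\}$.
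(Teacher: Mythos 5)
Your proof is correct and takes essentially the same route as the paper: split each double sum according to whether the indices are normal or tangential, exploit the block-diagonal form of the metric ($g_{\alpha m}=0$, $g_{mm}=1$) to kill the mixed pieces, and use $\Gamma^m_{\alpha\beta}=-\tfrac12 g_{\alpha\beta,m}$ and $\Gamma^l_{mm}=0$. The one place you differ is cosmetic: where you quote the contracted Christoffel identity $\sum_{\alpha,\beta} g^{\alpha\beta}\Gamma_{\alpha\beta}^{\gamma}=-\tfrac12 g^{\alpha\gamma}\partial_{x_\alpha}\ln|g|-\partial_{x_\alpha}g^{\alpha\gamma}$, the paper rederives it inline by differentiating $g^{ij}g_{js}=\delta^i_s$, so this is a shortcut rather than a genuinely different argument.
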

\begin{proof}
Using the boundary normal coordinate system, we get
\begin{eqnarray*}
- \sum_{i,j=1}^m\, g^{ij} \partial_{x_{i}} \partial_{x_{j}} & = & - \partial_{x_{m}}^{2} - \sum_{\alpha , \beta =1}^{m-1} \, g^{\alpha\beta} \partial_{x_{\alpha}} \partial_{x_{\beta}}, \qquad
- 2 \sum_{i,j=1}^m \, g^{ij} \omega_{j} \partial_{x_{i}} ~ = ~ - 2 \omega_{m} \partial_{x_{m}} - 2 \sum_{\alpha , \beta =1}^{m-1}\,  g^{\alpha\beta} \omega_{\beta} \partial_{x_{\alpha}}, \\
\sum_{i,j,l=1}^m \, g^{ij} \Gamma_{ij}^{l} \partial_{x_{l}} & = & \sum_{i,j=1}^m \, g^{ij} \Gamma_{ij}^{m} \partial_{x_{m}}  +  \sum_{i,j=1}^m \sum_{\alpha =1}^{m-1} g^{ij} \Gamma_{ij}^{\alpha} \partial_{x_{\alpha}}.
\end{eqnarray*}
Since
\begin{eqnarray}     \label{E:2.33}
\Gamma_{ij}^{k} & = & \frac{1}{2} \sum_{s=1}^{m} g^{ks} \left( g_{js,i} + g_{si,j} - g_{ij,s} \right),
\end{eqnarray}
we have
\begin{eqnarray*}
\Gamma^{m}_{ij} & = & - \frac{1}{2} g_{ij,m},
\end{eqnarray*}
which leads to
\begin{eqnarray}   \label{E:2.9}
\sum_{i,j=1}^m \,g^{ij} \Gamma^{m}_{ij} & = & - \frac{1}{2}\sum_{i,j=1}^m \,  g^{ij} g_{ij,m} ~ = ~ - \frac{1}{2} \sum_{\alpha , \beta =1}^{m-1} \, g^{\alpha\beta} g_{\alpha\beta,m}.
\end{eqnarray}
Hence, we get
\begin{eqnarray*}
\sum_{i,j=1}^m\, g^{ij} \Gamma^{\alpha}_{ij} & = & \frac{1}{2} \sum_{i,j=1}^m \, g^{ij} \sum_{s=1}^{m} g^{\alpha s} \left( g_{js,i} + g_{si,j} - g_{ij,s} \right) \\
& = &
\frac{1}{2} \sum_{i,j,s=1}^{m} g^{\alpha s} \left( g^{ij} \partial_{x_{i}} g_{js} +  g^{ij} \partial_{x_{j}} g_{si} -  g^{ij} \partial_{x_{s}} g_{ij} \right)  \\
& = & \frac{1}{2} \sum_{i,j,s=1}^{m} \left( - (\partial_{x_{i}} g^{ij})  g^{\alpha s} g_{js} -  (\partial_{x_{j}} g^{ij} ) g^{\alpha s} g_{si} -
 g^{\alpha s} g^{ij} \partial_{x_{s}} g_{ij} \right)  \\
& = & \frac{1}{2} \left( - \sum_{i=1}^m\, \partial_{x_{i}} g^{i\alpha} -  \sum_{j=1}^m \, \partial_{x_{j}} g^{\alpha j} \right) -
\frac{1}{2} \sum_{i,j,s=1}^{m} \,g^{\alpha s} g^{ij} \partial_{x_{s}} g_{ij} \\
& = & - \sum_{\gamma =1}^{m-1} \, \partial_{x_{\gamma}} g^{\gamma\alpha} - \frac{1}{2} \sum_{\beta=1}^{m-1} g^{\alpha \beta} \frac{\partial}{\partial {x_{\beta}}} \ln \big| g \big|.
\end{eqnarray*}
The second equality is obtained from (\ref{E:2.9}).
\end{proof}
\begin{corollary}  \label{Corollary:2.2}
In the boundary normal coordinate system on $U_{\epsilon_{0}}$, $\Delta_{M} + \lambda$ is expressed as follows:
\begin{eqnarray*}
\Delta_{M} + \lambda
& = & - \partial_{x_{m}}^{2} \Id ~ + ~ \left( A(x, x_{m}) - ~ 2 \omega_{m} \right)  \partial_{x_{m}} ~ + ~  D \left( x, x_{m},\frac{\partial}{\partial x}, \lambda \right)  \\
& & - \left( \partial_{x_{m}} \omega_{m} + \omega_{m} \omega_{m} - A(x, x_{m}) \omega_{m} \right) ,
\end{eqnarray*}
where $\Id$ is an $r_{0} \times r_{0} $ identity matrix and
\begin{eqnarray}   \label{E:2.10}
A(x, x_{m}) & = & \left\{ - \frac{1}{2} \sum_{\alpha, \beta = 1}^{m-1} g^{\alpha\beta}(x, x_{m}) ~ g_{\alpha\beta,m}(x, x_{m}) \right\} \Id ,  \\
D\left(x, x_{m}, \frac{\partial}{\partial x}, \lambda \right) & = &
\left\{ \left( - \sum_{\alpha, \beta = 1}^{m-1} g^{\alpha\beta}(x, x_{m}) \partial_{x_{\alpha}} \partial_{x_{\beta}} + \lambda \right)\right. \label{E:2.110}   \\
& &\left.-\sum_{\alpha, \beta = 1}^{m-1} \left( \frac{1}{2} g^{\alpha\beta}(x, x_{m}) (\partial_{x_{\alpha}} \ln |g|(x, x_{m}) ) +
(\partial_{x_{\alpha}} g^{\alpha\beta}(x, x_{m})) \right) \partial_{x_{\beta}} \right\} \Id  \nonumber  \\
& & - 2 \sum_{\alpha, \beta = 1}^{m-1} g^{\alpha \beta}(x, x_{m}) \omega_{\beta} \partial_{x_{\alpha}}
- \sum_{\alpha,\beta = 1}^{m-1} g^{\alpha\beta} (x, x_{m}) \left( \partial_{x_{\alpha}} \omega_{\beta} + \omega_{\alpha} \omega_{\beta} - \sum_{\gamma =1}^{m-1}\, \Gamma_{\alpha\beta}^{\gamma} \omega_{\gamma} \right) - E .     \nonumber
\end{eqnarray}
\end{corollary}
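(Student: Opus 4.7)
The plan is to verify the corollary by substituting the two identities of Lemma~\ref{Lemma:2.1} into the local expression (\ref{E:2.8}) for $\Delta_{M}$ and then relabeling the resulting pieces according to the definitions of $A$ and $D$.

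First, I would start from (\ref{E:2.8}), which decomposes $\Delta_{M}\phi$ into a second/first-order differential block
\[
\sum_{i,j=1}^m\left(- g^{ij}\partial_{x_i}\partial_{x_j} + \sum_{l=1}^m g^{ij}\Gamma_{ij}^{l}\partial_{x_l} - 2g^{ij}\omega_j\partial_{x_i}\right)\phi
\]
and a zero-order connection block
\[
-\sum_{i,j=1}^m g^{ij}\Bigl(\partial_{x_i}\omega_j + \omega_i\omega_j - \sum_{l=1}^m \Gamma_{ij}^{l}\omega_l\Bigr)\phi,
\]
together with the term $-E\phi$. I would then insert Lemma~\ref{Lemma:2.1}(1) for the first block and Lemma~\ref{Lemma:2.1}(2) for the second; no further geometry is needed.

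Next I would identify how each piece fits into the target expression. From Lemma~\ref{Lemma:2.1}(1), the purely normal part produces $-\partial_{x_m}^{2}$ together with the $\partial_{x_m}$-coefficient $-\tfrac{1}{2}\sum_{\alpha,\beta}g^{\alpha\beta}g_{\alpha\beta,m} - 2\omega_m$, which by the definition of $A(x,x_m)$ in (\ref{E:2.10}) is exactly $A(x,x_m) - 2\omega_m$; the remaining tangential pieces of Lemma~\ref{Lemma:2.1}(1) assemble into the tangential part of $D(x,x_m,\partial/\partial x,\lambda)$ in (\ref{E:2.110}) once the parameter $\lambda$ is added. From Lemma~\ref{Lemma:2.1}(2), the tangential connection contribution together with $-E$ gives the final two lines of (\ref{E:2.110}), while the remaining normal connection contribution is
\[
-\Bigl(\partial_{x_m}\omega_m + \omega_m\omega_m + \tfrac{1}{2}\sum_{\alpha,\beta}g^{\alpha\beta}g_{\alpha\beta,m}\,\omega_m\Bigr).
\]
Using once more the scalar identity $\tfrac{1}{2}\sum_{\alpha,\beta}g^{\alpha\beta}g_{\alpha\beta,m} = -A(x,x_m)$, this rewrites as $-(\partial_{x_m}\omega_m + \omega_m\omega_m - A(x,x_m)\omega_m)$, matching the last summand in the corollary.

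There is no real obstacle here: the corollary is essentially a bookkeeping consequence of Lemma~\ref{Lemma:2.1}. The only points requiring care are the consistent treatment of $A(x,x_m)$ as an $r_0\times r_0$ scalar matrix (so that it commutes with $\omega_m$ and the sign manipulations above are legitimate) and the decision to absorb $\lambda$ into $D$ rather than display it separately. Once these conventions are fixed, collecting terms by differential order yields the stated expression.
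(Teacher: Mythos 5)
Your proposal is correct and matches the paper's approach: Corollary~\ref{Corollary:2.2} is indeed a direct bookkeeping consequence of substituting the two identities of Lemma~\ref{Lemma:2.1} into the local expression (\ref{E:2.8}), regrouping by differential order, recognizing the $\partial_{x_m}$-coefficient and normal zero-order block via the definition of $A(x,x_m)$, and absorbing $-E$ and $+\lambda$ into $D$. The paper offers no separate proof because it regards the corollary as immediate from the lemma, which is exactly what you do.
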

We extend the unit normal vector field $\partial_{x_{m}}$ to $U_{\epsilon_{0}}$ along the geodesic $\gamma_{x}(u)$, which we denote by $\partial_{x_{m}}$ again.
We recall that the geodesic $\gamma_{x}(u)$ and the corresponding coordinate $x_{m}$ flow from ${\mathcal N}^{+}$ to ${\mathcal N}^{-}$ and
$M_{0}$ is a compact manifold with boundary $\partial M \cup {\mathcal N}^{+} \cup {\mathcal N}^{-}$.
This shows that for $x^{\pm} \in {\mathcal N}^{\pm}$, $(x^{+}, r)$ with $- \epsilon_{0} < r \leq 0$ belongs to $M_{0}$ and similarly
$(x^{-}, r)$ with $0 \leq r < \epsilon_{0}$ belongs to $M_{0}$.
We keep this fact in mind and define $M_{0^{\mp}}$,  $M_{x_{m}^{-}}$ for $x_{m}^{-} < 0$ and $M_{x_{m}^{+}}$ for $x_{m}^{+} > 0$ as
\begin{eqnarray}   \label{E:2.71}
M_{0^{-}} = M_{0^{+}}  :=  M_{0}, \qquad
M_{x_{m}^{-}}  :=  M - \cup_{x_{m}^{-} < u < 0} {\mathcal N}_{u}, \qquad
M_{x_{m}^{+}}  :=  M - \cup_{0 < u < x_{m}^{+}} {\mathcal N}_{u} .
\end{eqnarray}
\noindent
We write ${\mathcal N}^{\pm}$ as ${\mathcal N}_{0}^{\pm}$.
Then, $M_{x_{m}^{-}}$ and $M_{x_{m}^{+}}$ are compact manifolds with boundary $\partial M \cup {\mathcal N}_{x_{m}^{-}} \cup {\mathcal N}_{0}^{-}$
and $\partial M \cup {\mathcal N}_{0}^{+} \cup {\mathcal N}_{x_{m}^{+}}$, respectively.
We put ${\mathcal E}_{x_{m}^{\mp}} := {\mathcal E}|_{M_{x_{m}^{\mp}}}$.
We define $Q^{\pm}_{x_{m}^{\mp}}(\lambda) : C^{\infty}({\mathcal E}|_{{\mathcal N}_{x_{m}^{\mp}}}) \rightarrow C^{\infty}({\mathcal E}|_{{\mathcal N}_{x_{m}^{\mp}}})$,
$\Psi^{\pm}_{x_{m}^{\mp}}(\lambda) : C^{\infty}({\mathcal E}|_{{\mathcal N}_{x_{m}^{\mp}}}) \rightarrow C^{\infty}({\mathcal E}|_{{\mathcal N}_{0}^{\mp}})$ as follows.
For $f^{\mp} \in C^{\infty}({\mathcal E}|_{{\mathcal N}_{x_{m}^{\mp}}})$,  choose $\psi^{\mp} \in C^{\infty}({\mathcal E}_{x_{m}^{\mp}})$  such that
\begin{eqnarray}    \label{E:2.72}
& & \left( \Delta_{M} + \lambda \right) \psi^{\mp} = 0, \qquad \psi^{\mp}|_{\partial M} = 0, \qquad
\psi^{\mp}|_{{\mathcal N}_{x_{m}^{\mp}}} = f^{\mp},  \qquad    \psi^{\mp}|_{{\mathcal N}_{0}^{\mp}} = 0.
\end{eqnarray}
\noindent
We define
\begin{eqnarray}  \label{E:2.73}
& & Q^{+}_{x_{m}^{-}}(\lambda)(f^{-}) = \left( \nabla_{{\partial}_{x_{m}}} \psi^{-} \right)|_{{\mathcal N}_{x_{m}^{-}}}, \qquad
\Psi^{+}_{x_{m}^{-}}(\lambda)(f^{-}) = \left(- \nabla_{{\partial}_{x_{m}}} \psi^{-} \right)|_{{\mathcal N}_{0}^{-}},  \\
& & Q^{-}_{x_{m}^{+}}(\lambda)(f^{+}) = \left(- \nabla_{{\partial}_{x_{m}}} \psi^{+} \right)|_{{\mathcal N}_{x_{m}^{+}}},  \qquad
\Psi^{-}_{x_{m}^{+}}(\lambda)(f^{+}) = \left(\nabla_{{\partial}_{x_{m}}} \psi^{-} \right)|_{{\mathcal N}_{0}^{+}}.   \nonumber
\end{eqnarray}
\noindent
The Dirichlet-to-Neumann operator $R(\lambda)$ defined in (\ref{E:1.2}) is
\begin{eqnarray}   \label{E:2.100}
R(\lambda) = Q^{+}_{0}(\lambda) + Q^{-}_{0}(\lambda) + \Psi^{+}_{0}(\lambda) + \Psi^{-}_{0}(\lambda).
\end{eqnarray}
\noindent
{\it Remark} : If $M_{0}$ has two components, then $\Psi^{+}_{x_{m}^{-}}(\lambda) = \Psi^{-}_{x_{m}^{+}}(\lambda) = 0$.
\begin{lemma}   \label{Lemma:2.70}
$\Psi^{\pm}_{0}(\lambda)$ are smoothing operators.
\end{lemma}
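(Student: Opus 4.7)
The plan is to show that the Schwartz kernels of $\Psi^{\pm}_{0}(\lambda)$ are smooth functions, by exploiting the fact that the input boundary datum and the hypersurface on which the outgoing normal derivative is evaluated lie on two disjoint components of $\partial M_{0}$. I will treat $\Psi^{+}_{0}(\lambda)$; the argument for $\Psi^{-}_{0}(\lambda)$ is entirely symmetric. Taking the limit $x_{m}^{-}\rightarrow 0^{-}$, the hypersurface $\mathcal{N}_{x_{m}^{-}}$ tends to $\mathcal{N}_{0}^{+}$ (since the $x_{m}^{-}<0$ collar in $M_{0}$ is attached to $\mathcal{N}_{0}^{+}$), so that $\Psi^{+}_{0}(\lambda)$ acts as $f^{-}\mapsto -(\nabla_{\partial_{x_{m}}}\psi^{-})|_{\mathcal{N}_{0}^{-}}$, where $\psi^{-}\in C^{\infty}(\mathcal{E}_{0})$ is the unique solution of
\begin{eqnarray*}
(\Delta_{M}+\lambda)\psi^{-}~=~0,\qquad \psi^{-}|_{\mathcal{N}_{0}^{+}}~=~f^{-},\qquad \psi^{-}|_{\mathcal{N}_{0}^{-}}~=~0,\qquad \psi^{-}|_{\partial M}~=~0.
\end{eqnarray*}

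I would then factor $\Psi^{+}_{0}(\lambda)$ as the Poisson operator $P^{+}(\lambda)$ for the Dirichlet problem for $\Delta_{M_{0},D}+\lambda$ with data supported only on $\mathcal{N}_{0}^{+}$, composed with the operator which takes $-\nabla_{\partial_{x_{m}}}$ and restricts to $\mathcal{N}_{0}^{-}$. In terms of the Dirichlet Green's function $G_{M_{0}}(x,y;\lambda)$ of $\Delta_{M_{0},D}+\lambda$, the Schwartz kernel of $P^{+}(\lambda)$ on $\mathcal{N}_{0}^{+}$ is, up to sign, $\nabla_{\partial_{x_{m}}}^{y}G_{M_{0}}(x,y;\lambda)\big|_{y\in \mathcal{N}_{0}^{+}}$, so that the kernel of $\Psi^{+}_{0}(\lambda)$ becomes the mixed second normal derivative
\begin{eqnarray*}
K(x,y)~=~\pm\,\nabla_{\partial_{x_{m}}}^{x}\,\nabla_{\partial_{x_{m}}}^{y}\,G_{M_{0}}(x,y;\lambda)\big|_{x\in \mathcal{N}_{0}^{-},\ y\in \mathcal{N}_{0}^{+}}.
\end{eqnarray*}

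The essential, and really the only, point is that $\mathcal{N}_{0}^{+}$ and $\mathcal{N}_{0}^{-}$ are disjoint as subsets of $M_{0}$, even though they are diffeomorphic copies of a single hypersurface $\mathcal{N}$ in $M$; this is precisely the effect of cutting $M$ along $\mathcal{N}$. Since $G_{M_{0}}(\,\cdot\,,\,\cdot\,;\lambda)$ is smooth on $(M_{0}\times M_{0})\setminus\mathrm{diag}$ and is smooth up to the boundary off the diagonal by standard elliptic boundary regularity for the Dirichlet problem, the kernel $K$ is a smooth function on $\mathcal{N}_{0}^{-}\times \mathcal{N}_{0}^{+}$. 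Hence $\Psi^{+}_{0}(\lambda)$ is smoothing, and the same reasoning with $\mathcal{N}_{0}^{+}$ and $\mathcal{N}_{0}^{-}$ interchanged handles $\Psi^{-}_{0}(\lambda)$. I do not anticipate any serious obstacle; if $M_{0}$ splits into two components with $\mathcal{N}_{0}^{+}$ and $\mathcal{N}_{0}^{-}$ lying in different components, then $\psi^{-}$ vanishes identically on the component containing $\mathcal{N}_{0}^{-}$ and $\Psi^{\pm}_{0}(\lambda)=0$, in accord with the remark preceding the lemma.
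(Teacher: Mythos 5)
Your argument is correct, and it takes a genuinely different route from the paper. You observe that $\Psi^{+}_{0}(\lambda)$ has Schwartz kernel (up to sign) equal to the mixed normal derivative $\nabla_{\partial_{x_m}}^{x}\nabla_{\partial_{x_m}}^{y}G_{M_{0}}(x,y;\lambda)$ restricted to $\mathcal{N}_{0}^{-}\times\mathcal{N}_{0}^{+}$, and since these two boundary hypersurfaces are \emph{disjoint} in $\partial M_{0}$ and the Dirichlet Green's function of $\Delta_{M_{0},D}+\lambda$ is smooth up to the boundary away from the diagonal, the kernel is globally smooth. This is a direct, conceptual argument that exposes the geometric reason $\Psi^{\pm}_{0}(\lambda)$ are smoothing: the domain and range live on separated pieces of $\partial M_{0}$. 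The paper instead proceeds symbolically: by differentiating the relation $\Psi^{+}_{x_{m}}(\lambda)\bigl(\psi(x,x_{m})|_{\mathcal{N}_{x_{m}}}\bigr)=(-\nabla_{\partial_{x_{m}}}\psi)|_{\mathcal{N}_{0}^{-}}$ in $x_{m}$ and setting $x_{m}=0$, it obtains the operator identity $\Psi^{+}_{0}(\lambda)Q^{+}_{0}(\lambda)=-\partial_{x_{m}}\Psi^{+}_{0}(\lambda)+\Psi^{+}_{0}(\lambda)\omega_{m}$, and then argues recursively from the ellipticity of $Q^{+}_{0}(\lambda)$ that every homogeneous symbol term of $\Psi^{+}_{0}(\lambda)$ must vanish. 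Both proofs are valid; yours is shorter and arguably cleaner and more robust (it gives smoothness of the kernel directly, not just vanishing of an asymptotic symbol), while the paper's fits naturally into the symbolic calculus it sets up for computing $R(\lambda)$ and also produces a transport-type identity for the one-parameter family $\Psi^{+}_{x_{m}}(\lambda)$ as a byproduct. Your handling of the disconnected case is also consistent with the remark preceding the lemma.
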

\begin{proof}
We are going to show that $\Psi^{+}_{0}(\lambda)$ is a smoothing operator. The same method works for $\Psi^{-}_{0}(\lambda)$.
For $f \in C^{\infty}({\mathcal E}|_{{\mathcal N}^{+}_{0}})$, choose $\psi \in C^{\infty}({\mathcal E}_{0})$ such that
\begin{eqnarray}   \label{E:2.76}
\left( \Delta_{M} + \lambda \right) \psi = 0, \qquad \psi|_{\partial M} = 0, \qquad \psi|_{{\mathcal N}_{0}^{+}} = f, \qquad
\psi|_{{\mathcal N}_{0}^{-}} = 0.
\end{eqnarray}
For $x_{m} \leq 0$,
\begin{eqnarray}  \label{E:2.74}
\Psi^{+}_{x_{m}}(\lambda) \left( \Psi(x, x_{m})|_{{\mathcal N}_{x_{m}}} \right) & = &
\left(- \nabla_{{\partial}_{x_{m}}} \psi \right)|_{{\mathcal N}_{0}^{-}}.
\end{eqnarray}
We note that the right hand side of (\ref{E:2.74}) does not depend on $x_{m}$ since the normal derivative is taken on
${\mathcal N}^{-}$. Taking the derivative of (\ref{E:2.74}) with respect to $x_{m}$ and using $\nabla_{\partial_{x_{m}}} = \partial_{x_{m}} + \omega_{m}$, we get
\begin{eqnarray*}
0 & = & \left( \partial_{x_{m}} \Psi^{+}_{x_{m}}(\lambda) \right) \left( \Psi(x, x_{m})|_{{\mathcal N}_{x_{m}}} \right) +
\Psi^{+}_{x_{m}}(\lambda) \left( \partial_{x_{m}} \Psi(x, x_{m}) \right)|_{{\mathcal N}_{x_{m}}}  \nonumber \\
& = & \left( \partial_{x_{m}} \Psi^{+}_{x_{m}}(\lambda) \right) \left( \Psi(x, x_{m})|_{{\mathcal N}_{x_{m}}} \right) +
\Psi^{+}_{x_{m}}(\lambda) \left( \nabla_{\partial_{x_{m}}} - \omega_{m} \right) \Psi(x, x_{m})|_{{\mathcal N}_{x_{m}}}   \nonumber   \\
& = & \left\{ \partial_{x_{m}} \Psi^{+}_{x_{m}}(\lambda)  + \Psi^{+}_{x_{m}}(\lambda) Q^{+}_{x_{m}}(\lambda)  -
\Psi^{+}_{x_{m}}(\lambda) \omega_{m} \right\} \Psi(x, x_{m})|_{{\mathcal N}_{x_{m}}}.
\end{eqnarray*}
\noindent
When $x_{m} = 0$, the above equality leads to
\begin{eqnarray}    \label{E:2.75}
\Psi^{+}_{0}(\lambda) Q^{+}_{0}(\lambda) & = & - \partial_{x_{m}} \Psi^{+}_{0}(\lambda) + \Psi^{+}_{0}(\lambda) \omega_{m}.
\end{eqnarray}
\noindent
We note that
$\Psi^{+}_{0}(\lambda)$, $\partial_{x_{m}} \Psi^{+}_{0}(\lambda)$, $\Psi^{+}_{0}(\lambda) ~ \omega_{m}$ and $Q^{+}_{0}(\lambda)$ are $\Psi$DO's of order $1$, and $Q^{+}_{0}(\lambda)$ is an elliptic operator.
If we consider the homogeneous symbol of order $2$ in (\ref{E:2.75}), we can see that the order $1$ part of the homogeneous symbol of $\Psi^{+}_{0}(\lambda)$ is zero. Using (\ref{E:2.75}) repeatedly, one can show recursively that
all parts of the homogeneous symbol of $\Psi^{+}_{0}(\lambda)$ vanish.
Hence, $\Psi^{+}_{0}(\lambda)$ is a smoothing operator.
\end{proof}
The above result with (\ref{E:2.100}) shows that it is enough to consider only $Q_{0}^{\pm}(\lambda)$
when we compute the homogeneous symbol of $R(\lambda)$.
To compute the homogeneous symbols of $Q^{\pm}_{x_{m}}(\lambda)$, we are going to find two Riccati type equations involving
$Q^{\pm}_{x_{m}}(\lambda)$ as follows.
This idea goes back to I.M. Gelfand.
We recall that for $f \in C^{\infty}({\mathcal E}|_{{\mathcal N}_{0}})$,
we choose $\psi(x, x_{m})$ satisfying the property (\ref{E:2.76}).
Using $\nabla_{\partial_{x_{m}}} = \partial_{x_{m}} + \omega_{m}$ and the definition of $Q^{+}_{x_{m}}(\lambda)$ for $x_{m} \leq 0$, we get
\begin{eqnarray*}
\left( (\partial_{x_{m}} + \omega_{m}) \psi \right)\big|_{{\mathcal N}_{x_{m}}}  & = &
\left( \nabla_{\partial_{x_{m}}} \psi \right)\big|_{{\mathcal N}_{x_{m}}}
 ~ = ~ Q^{+}_{x_{m}}(\lambda) \left( \psi|_{{\mathcal N}_{x_{m}}} \right) .
\end{eqnarray*}
\noindent
Taking $\nabla_{\partial_{m}} = \partial_{m} + \omega_{m}$ one more time, we have
\begin{eqnarray*}
\left\{ ( \partial_{x_{m}} + \omega_{m} )^{2} \psi \right\}\big|_{{\mathcal N}_{x_{m}}} & = &
(\partial_{x_{m}} + \omega_{m}) Q^{+}_{x_{m}}(\lambda) \left( \psi|_{{\mathcal N}_{x_{m}}} \right)   \\
& = & ( \partial_{x_{m}} Q^{+}_{x_{m}}(\lambda) )  \left( \psi|_{{\mathcal N}_{x_{m}}} \right) +
Q^{+}_{x_{m}}(\lambda) \left( \partial_{x_m} \psi|_{{\mathcal N}_{x_{m}}} \right)
+ \omega_{m} Q^{+}_{x_{m}}(\lambda) \left( \psi|_{{\mathcal N}_{x_{m}}} \right)   \\
& = & \left( \partial_{x_{m}} Q^{+}_{x_{m}}(\lambda) + Q^{+}_{x_{m}}(\lambda)^{2}
 - Q^{+}_{x_{m}}(\lambda) \omega_{m} + \omega_{m} Q^{+}_{x_{m}}(\lambda) \right) \left( \psi|_{{\mathcal N}_{x_{m}}} \right).
\end{eqnarray*}
\noindent
On the other hand, Corollary \ref{Corollary:2.2} with (\ref{E:2.76}) shows that
\begin{eqnarray*}
\left\{ ( \partial_{x_{m}} + \omega_{m} )^{2} \psi \right\}\big|_{{\mathcal N}_{x_{m}}} & = &
\left\{ ( \partial_{x_{m}}^{2} + (\partial_{x_{m}} \omega_{m}) + 2 \omega_{m} \partial_{x_{m}} +
\omega_{m} \omega_{m})  \psi \right\}\big|_{{\mathcal N}_{x_{m}}}  \\
& = & \left\{\left(D\left(x, x_{m}, \frac{\partial}{\partial x}, \lambda\right) + A(x, x_{m}) \partial_{x_{m}} +
A(x, x_{m}) \omega_{m}\right) \psi \right\}\big|_{{\mathcal N}_{x_{m}}}   \\
& = & \left\{ \left(D\left( x, x_{m}, \frac{\partial}{\partial x}, \lambda \right) + A(x, x_{m}) Q^{+}_{x_{m}}(\lambda)\right) \psi \right\}\big|_{{\mathcal N}_{x_{m}}},
\end{eqnarray*}
\noindent
which leads to
\begin{eqnarray}   \label{E:2.14}
Q^{+}_{x_{m}}(\lambda)^{2}  +  \partial_{x_{m}} Q^{+}_{x_{m}}(\lambda) - Q^{+}_{x_{m}}(\lambda) \omega_{m} + \omega_{m} Q^{+}_{x_{m}}(\lambda)  =
D\left( x, x_{m}, \frac{\partial}{\partial x}, \lambda \right) + A(x, x_{m}) Q^{+}_{x_{m}}(\lambda).
\end{eqnarray}
We next consider the case of $Q^{-}_{x_{m}}(\lambda)$ for $x_{m} \geq 0$ in the same way.
For $f \in C^{\infty}({\mathcal E}_{{\mathcal N}_{0}^{-}})$, we choose $\phi(x, x_{m}) \in C^{\infty}({\mathcal E}_{0})$ satisfying
\begin{eqnarray*}
\left( \Delta_{M} + \lambda \right) \phi = 0, \qquad \phi|_{\partial M} = 0, \qquad \phi|_{{\mathcal N}_{0}^{+}} = 0, \qquad
\phi|_{{\mathcal N}_{0}^{-}} = f.
\end{eqnarray*}
\noindent
From the definition of $Q^{-}_{x_{m}}(\lambda)$, we get
\begin{eqnarray*}
\left( \nabla_{\partial_{x_{m}}} \phi \right)\big|_{{\mathcal N}_{x_{m}}} & = &
\left( (\partial_{x_{m}} + \omega_{m}) \phi \right)\big|_{{\mathcal N}_{x_{m}}}
 ~ = ~ - ~ Q^{-}_{x_{m}}(\lambda) \left( \phi|_{{\mathcal N}_{x_{m}}} \right) ,
\end{eqnarray*}
\noindent
which leads to
\begin{eqnarray*}
\left\{ ( \partial_{x_{m}} + \omega_{m} )^{2} \phi \right\}\big|_{{\mathcal N}_{x_{m}}} & = &
- (\partial_{x_{m}} + \omega_{m}) Q^{-}_{x_{m}}(\lambda) \left( \phi|_{{\mathcal N}_{x_{m}}} \right)   \\
& = & - ( \partial_{x_{m}} Q^{-}_{x_{m}}(\lambda) )  \left( \phi|_{{\mathcal N}_{x_{m}}} \right) -
Q^{-}_{x_{m}}(\lambda) \left( \partial_{x_{m}} \phi|_{{\mathcal N}_{x_{m}}} \right)
- \omega_{m} Q^{-}_{x_{m}}(\lambda) \left( \phi|_{{\mathcal N}_{x_{m}}} \right)   \\
& = & \left( - \partial_{x_{m}} Q^{-}_{x_{m}}(\lambda) + Q^{-}_{x_{m}}(\lambda)^{2}
 + Q^{-}_{x_{m}}(\lambda) \omega_{m} - \omega_{m} Q^{-}_{x_{m}}(\lambda) \right) \left( \phi|_{{\mathcal N}_{x_{m}}} \right).
\end{eqnarray*}
\noindent
On the other hand, Corollary \ref{Corollary:2.2} shows that
\begin{eqnarray*}
\left\{ ( \partial_{x_{m}} + \omega_{m} )^{2} \phi \right\}\big|_{{\mathcal N}_{x_{m}}} & = &
\left\{ (\partial_{x_{m}}^{2} + (\partial_{x_{m}} \omega_{m}) +
2 \omega_{m} \partial_{x_{m}} + \omega_{m} \omega_{m})  \phi \right\}\big|_{{\mathcal N}_{x_{m}}}  \\
& = & \left\{ \left(D\left( x, x_{m}, \frac{\partial}{\partial x}, \lambda\right) +
A(x, x_{m}) \partial_{x_{m}} + A(x, x_{m}) \omega_{m}\right) \phi \right\}\big|_{{\mathcal N}_{x_{m}}}   \\
& = & \left\{ \left(D\left( x, x_{m}, \frac{\partial}{\partial x}, \lambda \right) - A(x, x_{m}) Q^{-}_{x_{m}}(\lambda)\right) \phi \right\}\big|_{{\mathcal N}_{x_{m}}},
\end{eqnarray*}
\noindent
which leads to
\begin{eqnarray}   \label{E:2.15}
Q^{-}_{x_{m}}(\lambda)^{2}  -  \partial_{x_{m}} Q^{-}_{x_{m}}(\lambda) - \omega_{m} Q^{-}_{x_{m}}(\lambda) + Q^{-}_{x_{m}}(\lambda) \omega_{m}  =
D\left( x, x_{m}, \frac{\partial}{\partial x}, \lambda\right) - A(x, x_{m}) Q^{-}_{u}(\lambda).
\end{eqnarray}
\noindent
Eqs.~(\ref{E:2.14}) and (\ref{E:2.15}) give the following two Riccati type equations,
which are very useful in computing the homogeneous symbols of $Q^{\pm}_{x_{m}}(\lambda)$.
\begin{lemma}   \label{Lemma:2.3}
$Q^{\pm}_{x_{m}}(\lambda)$ satisfy the following equations.
\begin{eqnarray*}
Q^{+}_{x_{m}}(\lambda)^{2} & = & D\left( x, x_{m},\frac{\partial}{\partial x}, \lambda\right)
+ A(x, x_{m}) Q^{+}_{x_{m}}(\lambda) - \partial_{x_{m}} Q^{+}_{x_{m}}(\lambda) + \left( Q^{+}_{x_{m}}(\lambda) \omega_{m} - \omega_{m} Q^{+}_{x_{m}}(\lambda)\right) ,        \nonumber    \\
Q^{-}_{x_{m}}(\lambda)^{2} & = & D\left( x, x_{m}, \frac{\partial}{\partial x}, \lambda\right)
- A(x, x_{m}) Q^{-}_{x_{m}}(\lambda) + \partial_{x_{m}} Q^{-}_{x_{m}}(\lambda) - \left( Q^{-}_{x_{m}}(\lambda) \omega_{m} - \omega_{m} Q^{-}_{x_{m}}(\lambda)\right) .
\end{eqnarray*}
\end{lemma}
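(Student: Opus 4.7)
The plan is to derive each identity by computing $(\partial_{x_m} + \omega_m)^2 \psi|_{{\mathcal N}_{x_m}}$ in two distinct ways and equating them, where $\psi$ is the harmonic extension used in the definition of $Q^\pm_{x_m}(\lambda)$.

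For the first equation, I would start from $f \in C^\infty({\mathcal E}|_{{\mathcal N}_0^+})$ and let $\psi$ satisfy (\ref{E:2.76}). The defining property of $Q^+_{x_m}(\lambda)$ translates, via $\nabla_{\partial_{x_m}} = \partial_{x_m} + \omega_m$, to $(\partial_{x_m} + \omega_m)\psi|_{{\mathcal N}_{x_m}} = Q^+_{x_m}(\lambda)(\psi|_{{\mathcal N}_{x_m}})$. Applying $\partial_{x_m} + \omega_m$ once more, using the product rule, and replacing $\partial_{x_m}\psi|_{{\mathcal N}_{x_m}}$ by $(Q^+_{x_m}(\lambda) - \omega_m)(\psi|_{{\mathcal N}_{x_m}})$, yields the first expression $\bigl(\partial_{x_m}Q^+_{x_m}(\lambda) + Q^+_{x_m}(\lambda)^2 + [\omega_m, Q^+_{x_m}(\lambda)]\bigr)(\psi|_{{\mathcal N}_{x_m}})$, in which the commutator arises precisely from the non-commutativity of $\omega_m$ and $Q^+_{x_m}(\lambda)$.

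The second route uses the PDE directly: expand $(\partial_{x_m} + \omega_m)^2 = \partial_{x_m}^2 + (\partial_{x_m}\omega_m) + 2\omega_m\partial_{x_m} + \omega_m^2$, then use the formula for $\Delta_M + \lambda$ from Corollary \ref{Corollary:2.2} together with $(\Delta_M + \lambda)\psi = 0$ to eliminate $\partial_{x_m}^2\psi$. The resulting expression is $\bigl(D(x,x_m,\partial/\partial x,\lambda) + A(x,x_m)(\partial_{x_m} + \omega_m)\bigr)\psi|_{{\mathcal N}_{x_m}}$. Substituting $Q^+_{x_m}(\lambda)(\psi|_{{\mathcal N}_{x_m}})$ for the remaining covariant normal derivative and equating with the previous expression gives the first Riccati-type identity after a short rearrangement.

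The second identity is obtained by the same procedure starting from the $\phi$ of the definition of $Q^-_{x_m}(\lambda)$; the only essential change is the sign flip $(\partial_{x_m} + \omega_m)\phi|_{{\mathcal N}_{x_m}} = -Q^-_{x_m}(\lambda)(\phi|_{{\mathcal N}_{x_m}})$, due to the reversed orientation convention at ${\mathcal N}^-$, which propagates through the computation and reverses the signs of the $AQ^-_{x_m}(\lambda)$, $\partial_{x_m}Q^-_{x_m}(\lambda)$, and commutator contributions. The only genuine obstacle is careful bookkeeping of operator ordering and signs; in particular, one must resist treating $\omega_m$ and $Q^\pm_{x_m}(\lambda)$ as commuting, since the term $[\omega_m, Q^\pm_{x_m}(\lambda)]$ is essential for the final form of the identities and will also matter when these equations are iterated to extract the homogeneous symbols in later sections.
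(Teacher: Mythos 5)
Your proposal is correct and follows precisely the same route as the paper: apply $(\partial_{x_m}+\omega_m)$ twice to the harmonic extension, once using the defining relation for $Q^{\pm}_{x_m}(\lambda)$ (which produces $\partial_{x_m}Q^{\pm}$, $(Q^{\pm})^2$, and the commutator $[\omega_m,Q^{\pm}_{x_m}(\lambda)]$) and once using Corollary \ref{Corollary:2.2} together with $(\Delta_M+\lambda)\psi=0$ (which produces $D+A\,\nabla_{\partial_{x_m}}$), then equate and rearrange. The sign-flip mechanism you describe for $Q^-_{x_m}(\lambda)$ matches the paper's computation exactly, so there is nothing to add.
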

\noindent
{\it Remark} :
(1) We may identify ${\mathcal N}_{x_{m}}$ with ${\mathcal N}_{0}$ along the geodesic $\gamma_{x}(u)$.
Using this identification, we may regard $Q^{\pm}_{x_{m}}(\lambda)$ to be one parameter families of operators defined on $C^{\infty}({\mathcal E}|_{{\mathcal N}_{0}})$.\\
\noindent
(2) When $M$ is a closed manifold and $M_{0}$ has two components $M_{1}$ and $M_{2}$,
$Q_{0}^{+}(0)$ is equal to the Dirichlet-to-Neumann operator defined in \cite{LU} or \cite{PS}.
In this case, the first equation in the above lemma is equal to the equation given in (1.4) of \cite{LU}.

We now use Lemma \ref{Lemma:2.3} to compute the homogeneous symbols of $Q^{\pm}_{x_{m}}(\lambda)$.
We denote the homogeneous symbols of $D \left( x, x_{m},\frac{\partial}{\partial x} ,\lambda \right)$,
$Q^{+}_{x_{m}}(\lambda)$ and $Q^{-}_{x_{m}}(\lambda)$ as follows (cf. (\ref{E:2.110})).
\begin{eqnarray}   \label{E:2.16}
\sigma \left(D\left(  x, x_{m},\frac{\partial}{\partial x},  \lambda\right) \right) & = & p_{2}(x, x_{m}, \xi, \lambda) ~ + ~
 p_{1}(x, x_{m}, \xi) ~ + ~ p_{0}(x, x_{m}, \xi),    \\
\sigma \left( Q^{+}_{x_{m}}(\lambda) \right) & \sim & \alpha_{1}(x, x_{m}, \xi, \lambda) ~ + ~
\alpha_{0}(x, x_{m}, \xi, \lambda) ~ + ~ \alpha_{-1}(x, x_{m}, \xi, \lambda) ~ + ~  \cdots,   \nonumber  \\
\sigma \left( Q^{-}_{x_{m}}(\lambda) \right) & \sim & \beta_{1}(x, x_{m}, \xi, \lambda) ~ + ~
\beta_{0}(x, x_{m}, \xi, \lambda) ~ + ~ \beta_{-1}(x, x_{m}, \xi, \lambda) ~ + ~  \cdots,   \nonumber
\end{eqnarray}
\noindent
where
\begin{eqnarray}   \label{E:2.17}
& & p_{2}(x, x_{m}, \xi, \lambda) ~ = ~ \left( \sum_{\alpha, \beta = 1}^{m-1} g^{\alpha\beta}(x,  x_{m}) \xi_{\alpha} \xi_{\beta} + \lambda \right) \Id
 ~ = ~ \left( | \xi |^{2} + \lambda \right) \Id,   \\
& & p_{1}(x,  x_{m}, \xi) ~ = ~ - i
\sum_{\alpha, \beta = 1}^{m-1} \left( \frac{1}{2} g^{\alpha\beta}(x,  x_{m}) \partial_{x_{\alpha}} \ln |g|(x,  x_{m}) +
\partial_{x_{\alpha}} g^{\alpha\beta}(x,  x_{m}) \right) \xi_{\beta} \Id
- 2 i \sum_{\alpha, \beta = 1}^{m-1} g^{\alpha\beta} \omega_{\alpha} \xi_{\beta} ,   \nonumber \\
& & p_{0}(x,  x_{m}, \xi) ~ = ~
- \sum_{\alpha \beta =1}^{m-1} \,  g^{\alpha ,\beta} \left( \partial_{x_{\alpha}} \omega_{\beta} + \omega_{\alpha} \omega_{\beta} - \sum_{\gamma =1}^{m-1} \,\Gamma_{\alpha\beta}^{\gamma} \omega_{\gamma} \right) - E . \nonumber
\end{eqnarray}
\noindent
Then, we have
\begin{eqnarray}   \label{E:2.18}
\sigma \left( \partial_{x_{m}} Q^{+}_{x_{m}}(\lambda) \right) & = & \partial_{x_{m}} \alpha_{1}(x, x_{m}, \xi, \lambda) +
\partial_{x_{m}} \alpha_{0}(x, x_{m}, \xi, \lambda)  +  \partial_{x_{m}} \alpha_{-1}(x,  x_{m}, \xi, \lambda)  +   \cdots ,     \\
\sigma \left( \partial_{x_{m}} Q^{-}_{x_{m}}(\lambda) \right) & = & \partial_{x_{m}} \beta_{1}(x,  x_{m}, \xi, \lambda)  +
\partial_{x_{m}} \beta_{0}(x, x_{m}, \xi, \lambda)  +  \partial_{x_{m}} \beta_{-1}(x,  x_{m}, \xi, \lambda)  +   \cdots .   \nonumber
\end{eqnarray}
\noindent
We put $D_{x} := \frac{1}{i} \partial_{x}$ and denote by $(x, \xi)$ a coordinate for $T^{\ast}{\mathcal N}$.
It is well known \cite{Gi1,Sh} that
\begin{eqnarray}   \label{E:2.19}
\sigma \left( Q^{+}_{x_{m}}(\lambda)^{2} \right) & \sim & \sum_{k=0}^{\infty} \sum_{\stackrel{|\omega|+i+j=k}{i, j \geq 0}} \frac{1}{\omega !} \partial^{\omega}_{\xi} \alpha_{1-i}(x, x_{m}, \xi, \lambda) \cdot D_{x}^{\omega}\alpha_{1-j}(x, x_{m}, \xi, \lambda)       \\
& = & \alpha_{1}^{2} + \left(\partial_{\xi} \alpha_{1} \cdot D_{x} \alpha_{1} + 2 \alpha_{1} \cdot \alpha_{0} \right)     \nonumber \\
& & + ~ \left( 2 \alpha_{1} \alpha_{-1} + \alpha_{0}^{2} - i (\partial_{\xi} \alpha_{0} ) (\partial_{x} \alpha_{1} )-
i (\partial_{\xi} \alpha_{1} )(\partial_{x} \alpha_{0}) - \sum_{| \omega | =2}\,
\frac{1}{\omega !} (\partial^{\omega}_{\xi} \alpha_{1})( \partial^{\omega}_{x} \alpha_{1}) \right) ~ + ~ \cdots , \nonumber \\
\sigma \left( Q^{-}_{x_{m}}(\lambda)^{2} \right) & \sim & \sum_{k=0}^{\infty} \sum_{\stackrel{|\omega|+i+j=k}{i, j \geq 0}} \frac{1}{\omega !} \partial^{\omega}_{\xi} \beta_{1-i}( x,x_m, \xi, \lambda) \cdot
D_{x}^{\omega}\beta_{1-j}(x,x_m, \xi, \lambda)   \\
& = & \beta_{1}^{2} + \left(\partial_{\xi} \beta_{1} \cdot D_{x} \beta_{1} + 2 \beta_{1} \cdot \beta_{0} \right) \nonumber \\
& & + ~ \left( 2 \beta_{1} \beta_{-1} + \beta_{0}^{2} - i (\partial_{\xi} \beta_{0} )(\partial_{x} \beta_{1}) -
i (\partial_{\xi} \beta_{1} )(\partial_{x} \beta_{0}) - \sum_{|\omega |=2}
\frac{1}{\omega !} (\partial^{\omega}_{\xi} \beta_{1})( \partial^{\omega}_{x} \beta_{1}) \right) ~ + ~ \cdots .  \nonumber
\end{eqnarray}
\noindent
By comparing the degree of homogeneity of the equations in Lemma \ref{Lemma:2.3},
we can compute the homogeneous symbols of $Q^{+}_{x_{m}}(\lambda)$ and $Q^{-}_{x_{m}}(\lambda)$.
For example, the first three terms are given as follows (cf. (2.2) and (2.3) in \cite{PS}).
From now on we often omit $\Id$ to simplify the presentation.
\begin{eqnarray}   \label{E:2.20}
& & \alpha_{1}(x, x_{m}, \xi, \lambda) ~ = ~ \sqrt{|\xi|^2 + \lambda}, \\
& & \alpha_{0}(x, x_{m}, \xi, \lambda) ~ = ~ \frac{1}{2 \sqrt{|\xi|^2 + \lambda}}
\left\{ p_{1}(x, x_{m}, \xi) + A(x, x_{m}) \sqrt{|\xi|^2 + \lambda}  \right.     \nonumber  \\
& & \hspace{1.0 cm} \left. - \partial_{\xi} \sqrt{|\xi|^2 + \lambda} \cdot \frac{1}{i} \partial_{x}
\sqrt{|\xi|^2 + \lambda} - \partial_{x_{m}} \sqrt{|\xi|^2 + \lambda} \right\},    \nonumber  \\
& & \alpha_{-1}(x, x_{m}, \xi, \lambda) ~ = ~ \frac{1}{2 \sqrt{|\xi|^2 + \lambda}}
\Big\{ \sum_{| \omega |=2} \frac{1}{\omega !} (\partial_{\xi}^{\omega} \alpha_{1} )(\partial_{x}^{\omega} \alpha_{1} )+
i (\partial_{\xi} \alpha_{0})( \partial_{x} \alpha_{1} )+ i (\partial_{\xi} \alpha_{1})( \partial_{x} \alpha_{0})
- \alpha_{0}^{2}   \nonumber  \\
& & \hspace{1.0 cm} ~ +p_{0} + A(x, x_{m}) \alpha_{0} - \partial_{x_{m}} \alpha_{0} + (\alpha_{0} \omega_{m} - \omega_{m} \alpha_{0}) \Big\}.   \nonumber
\end{eqnarray}
\noindent
{\it Remark} : When $\lambda = 0$ and $x_{m} = 0$, the homogeneous symbol of $Q_{0}^{+}(0)$ given above is equal to the one for the Dirichlet-to-Neumann operator given in
(2.2) - (2.4) of \cite{PS}.\\
\noindent
Similarly,
\begin{eqnarray}  \label{E:2.21}
\beta_{1}(x, x_{m}, \xi, \lambda) & = & \sqrt{|\xi|^2 + \lambda},    \\
\beta_{0}(x, x_{m}, \xi, \lambda) & = & \frac{1}{2 \sqrt{|\xi|^2 + \lambda}}
\left\{ p_{1}(x, x_{m}, \xi) - A(x, x_{m}) \sqrt{|\xi|^2 + \lambda}  \right.    \nonumber \\
& & \left. - \partial_{\xi} \sqrt{|\xi|^2 + \lambda} \cdot \frac{1}{i} \partial_{x}
\sqrt{| \xi|^2 + \lambda} + \partial_{x_{m}} \sqrt{| \xi|^2 + \lambda} \right\},    \nonumber  \\
\beta_{-1}(x, x_m, \xi, \lambda) & = & \frac{1}{2 \sqrt{|\xi|^2 + \lambda}}
\Big\{ \sum_{| \omega | =2} \frac{1}{\omega !} (\partial_{\xi}^{\omega} \beta_{1} )(\partial_{x}^{\omega} \beta_{1} )+
i (\partial_{\xi} \beta_{0})( \partial_{x} \beta_{1}) + i (\partial_{\xi} \beta_{1})( \partial_{x} \beta_{0})
- \beta_{0}^{2}   \nonumber  \\
& & ~ + p_{0} - A(x, x_{m}) \beta_{0} + \partial_{x_{m}} \beta_{0} - ( \beta_{0} \omega_{m} - \omega_{m} \beta_{0}) \Big\} .  \nonumber
\end{eqnarray}
\noindent
Since $R(\lambda) = Q_{0}^{+}(\lambda) + Q^{-}_{0}(\lambda)$ up to a smoothing operator, the homogeneous symbol of $R(\lambda)$ is given as follows.
\begin{eqnarray}    \label{E:2.22}
\sigma(R(\lambda))(x, 0, \xi, \lambda)  & \sim & \theta_{1}(x, 0, \xi, \lambda)
+ \theta_{0}(x, 0, \xi, \lambda) + \theta_{-1}(x, 0, \xi, \lambda) + \cdots,
\end{eqnarray}
\noindent
where $\theta_{1-j}(x, 0, \xi, \lambda) = \alpha_{1-j}(x, 0, \xi, \lambda) + \beta_{1-j}(x, 0, \xi, \lambda)$.
The first three terms of $\sigma(R(\lambda))(x, 0, \xi, \lambda)$ are given as follows.
\begin{eqnarray}    \label{E:2.23}
\theta_{1} & = & \alpha_{1} + \beta_{1} ~ = ~ 2 \sqrt{|\xi|^2 + \lambda},   \\
\theta_{0} & = & \alpha_{0} + \beta_{0} ~ = ~ \frac{1}{\sqrt{|\xi |^2 + \lambda}}
\left\{ p_{1}  - \partial_{\xi} \sqrt{|\xi|^2 + \lambda} \cdot \frac{1}{i} \partial_{x} \sqrt{|\xi|^2 + \lambda} \right\},  \nonumber \\
\theta_{-1} & = & \alpha_{-1} + \beta_{-1} ~ = ~ \frac{1}{2 \sqrt{|\xi|^2 + \lambda}} \Big\{\sum_{| \omega | =2} \frac{2}{\omega !} (\partial_{\xi}^{\omega} \alpha_{1} )(\partial_{x}^{\omega} \alpha_{1} )~ + ~
i \partial_{\xi} ( \alpha_{0} + \beta_{0} ) \partial_{x} \alpha_{1} +
i (\partial_{\xi} \alpha_{1}) \partial_{x} ( \alpha_{0} + \beta_{0} )  \nonumber   \\
& &  ~ - ( \alpha_{0}^{2} + \beta_{0}^{2}) + 2 p_{0} + A(x,0) (\alpha_{0} - \beta_{0}) - \partial_{x_{m}} (\alpha_{0} - \beta_{0}) \Big\},  \nonumber
\end{eqnarray}
\noindent
where
\begin{eqnarray}   \label{E:2.24}
(\alpha_{0} - \beta_{0})(x, x_{m}, \xi, \lambda) & = & A(x, x_{m}) ~ - ~
\frac{\partial_{x_{m}} \sqrt{|\xi|^2 + \lambda}}{\sqrt{|\xi|^2 + \lambda}},
\end{eqnarray}
\noindent
and hence $(\alpha_{0} - \beta_{0}) \omega_{m} - \omega_{m} (\alpha_{0} - \beta_{0}) = 0$ since $\alpha_{0} - \beta_{0}$ is a scalar function.

We denote the homogeneous symbol of the resolvent $\left( \mu - R(\lambda) \right)^{-1}$ by
\begin{eqnarray}    \label{E:2.25}
\sigma \left( (\mu - R(\lambda))^{-1} \right)(x, \xi, \lambda, \mu) & \sim & r_{-1}(x, \xi, \lambda, \mu) + r_{-2}(x, \xi, \lambda, \mu) +
r_{-3}(x, \xi, \lambda, \mu) + \cdots.
\end{eqnarray}
\noindent
Then,
\begin{eqnarray}    \label{E:2.26}
r_{-1}(x, \xi, \lambda, \mu) & = & \left( \mu - 2 \sqrt{|\xi|^2 + \lambda} \right)^{-1},  \\
r_{-1-j}(x, \xi, \lambda, \mu) & = & \left( \mu - 2 \sqrt{|\xi|^2 + \lambda} \right)^{-1}\,\, \sum_{k=0}^{j-1} \,\,\sum_{|\omega|+l+k=j} \frac{1}{\omega!}
\partial_{\xi}^{\omega} \theta_{1-l}\cdot D_{x}^{\omega} r_{-1-k},    \nonumber
\end{eqnarray}
\noindent
which shows that the first three terms are given as follows:
\begin{eqnarray}    \label{E:2.27}
r_{-1} & = & \left( \mu - 2 \sqrt{|\xi|^2 + \lambda} \right)^{-1},  \\
r_{-2} & = & \left( \mu - 2 \sqrt{|\xi|^2 + \lambda} \right)^{-1}
\left\{ \partial_{\xi} \theta_{1} \cdot D_{x} r_{-1} + \theta_{0} \cdot r_{-1} \right\} , \nonumber  \\
r_{-3} & = & \left( \mu - 2 \sqrt{|\xi|^2 + \lambda} \right)^{-1} \Big\{  \sum_{| \omega | =2} \frac{1}{\omega !} \partial_{\xi}^{\omega} \theta_{1} \cdot D_{x}^{\omega} r_{-1}
+ \partial_{\xi} \theta_{1} \cdot D_{x} r_{-2}\nonumber\\
& &\hspace{5.0cm} + \partial_{\xi} \theta_{0} \cdot D_{x} r_{-1}  + \theta_{0} \cdot r_{-2}
+ \theta_{-1} \cdot r_{-1} \Big\}.   \nonumber
\end{eqnarray}
It was shown in the Appendix of \cite{BFK} that the coefficients $\pi_{j}$ and $q_{j}$ in (\ref{E:1.7}) are computed by the following integrals:
\begin{eqnarray}    \label{E:2.28}
\pi_{j} & = & - \frac{\partial}{\partial s}\bigg|_{s=0} \int_{{\mathcal N}}  \left( \frac{1}{(2 \pi)^{m-1}} \int_{T_{x}^{\ast}{\mathcal N}}
\frac{1}{2 \pi i} \int_{\gamma} \mu^{-s} \Tr r_{-1-j} \left( x, \xi, \frac{\lambda}{|\lambda|}, \mu\right) d\mu d \xi \right)  d \vol({\mathcal N}) ,       \\
q_{j} & = & \frac{1}{2} ~ \int_{{\mathcal N}} \left( \frac{1}{(2 \pi)^{m-1}} \int_{T_{x}^{\ast}{\mathcal N}}
\frac{1}{2 \pi i} \int_{\gamma} \mu^{-s} \Tr r_{-1-j} \left(x, \xi, \frac{\lambda}{|\lambda|}, \mu\right) d\mu d \xi \right) d \vol({\mathcal N})  \bigg|_{s=0}. \nonumber
\end{eqnarray}
\noindent
In the next section, we are going to compute $\pi_{j}$ and $q_{j}$ in the boundary normal coordinate system when ${\mathcal N}$ is a $2$-dimensional closed manifold.
\section{The asymptotic expansion of $\ln \Det R(\lambda)$ for $\lambda \rightarrow \infty$}
For $p \in {\mathcal N}$, we choose a normal coordinate $x = ( x_{1}, \cdots, x_{m-1})$ on an open neighborhood of $p$ with
$p = (0, \cdots, 0)$. We next choose the boundary normal coordinate system $(x, x_{m}) = (x_{1}, \cdots, x_{m-1}, x_{m})$ on $U_{\epsilon_{0}}$ as introduced in the beginning of Section 2.
Then, $\{ \partial_{x_{1}}, \cdots, \partial_{x_{m-1}}, \partial_{x_{m}} \}$ is a local basis for $TM\big|_{U_{\epsilon_{0}}}$.
We denote by $\nabla^{LC}$ the Levi-Civita connection on $(M, g)$ and define the Riemann curvature tensor by
\begin{eqnarray}   \label{E:3.2}
 R(\partial_{x_{i}}, \partial_{x_{j}}) \partial_{x_{k}} & := & \nabla^{LC}_{\partial_{x_{i}}} \nabla^{LC}_{\partial_{x_{j}}} \partial_{x_{k}} - \nabla^{LC}_{\partial_{x_{j}}} \nabla^{LC}_{\partial_{x_{i}}} \partial_{x_{k}} -
 \nabla^{LC}_{[\partial_{x_{i}}, \partial_{x_{j}}]} \partial_{x_{k}}.
\end{eqnarray}
\noindent
We note that at the point $p \in {\mathcal N}$,
$\{ \partial_{x_{1}}|_{p}, \cdots, \partial_{x_{m-1}}|_{p}, \partial_{x_{m}}|_{p} \}$ is an orthonormal basis for $T_{p}M$.
In this case, we define the components of the Riemannian curvature tensor and Ricci tensor at $p \in {\mathcal N}$ as follows.
\begin{eqnarray}    \label{E:3.3}
R_{\alpha\beta\gamma\delta}(p) & := & \langle R(\partial_{x_{\alpha}}, \partial_{x_{\beta}}) \partial_{x_{\gamma}}, \partial_{x_{\delta}} \rangle|_{p}, \qquad
R_{\alpha\beta}(p) ~ := ~ \sum_{\gamma=1}^{m-1} \langle R(\partial_{x_{\gamma}}, \partial_{x_{\alpha}}) \partial_{x_{\beta}}, \partial_{x_{\gamma}} \rangle|_{p}.
\end{eqnarray}
\noindent
The scalar curvatures $\tau_{{\mathcal N}}(p)$ of ${\mathcal N}$ and $\tau_{M}(p)$ of $M$ at $p$ are defined by
\begin{eqnarray}   \label{E:3.4}
\tau_{{\mathcal N}}(p) & := & \sum_{\alpha=1}^{m-1} R_{\alpha\alpha}(p)
~ = ~ \sum_{\alpha, \gamma=1}^{m-1} R_{\gamma\alpha\alpha\gamma}(p) ~ = ~ - ~ \sum_{\alpha, \gamma=1}^{m-1} R_{\gamma\alpha\gamma\alpha}(p),  \\
\tau_{M}(p) & := & \sum_{i,j=1}^{m}  \langle R(\partial_{x_{i}}, \partial_{x_{j}}) \partial_{x_{j}}, \partial_{x_{i}} \rangle|_{p} .   \nonumber
\end{eqnarray}
\noindent
It is well known \cite{PS,Vi} that for $1 \leq \alpha, \beta \leq m-1$,
\begin{eqnarray}     \label{E:3.6}
g_{\alpha\beta}(x,0) & = & \delta_{\alpha\beta} + \sum_{\mu, \nu = 1}^{m-1} \frac{1}{3} R_{\alpha\mu\beta\nu} x^{\mu} x^{\nu} + O(|x|^{3}),  \\
g^{\alpha\beta}(x,0) & = & \delta_{\alpha\beta} - \sum_{\mu, \nu = 1}^{m-1} \frac{1}{3} R_{\alpha\mu\beta\nu} x^{\mu} x^{\nu} + O(|x|^{3}),     \nonumber   \\
|g|(x,0) & = & 1 - \frac{1}{3} \sum_{\mu, \nu = 1}^{m-1} R_{\mu\nu} x^{\mu} x^{\nu} + O(|x|^{3}),  \nonumber
\end{eqnarray}
\noindent
which shows that
\begin{eqnarray}     \label{E:3.7}
g^{\alpha\beta,\gamma\epsilon}(p) & = & - ~ \frac{1}{3} ( R_{\alpha\gamma\beta\epsilon}(p) + R_{\alpha\epsilon\beta\gamma}(p)),\qquad
\partial_{x^{\mu}} \partial_{x^{\nu}} |g|(p) ~ = ~ - \frac{2}{3} R_{\mu\nu}(p).
\end{eqnarray}
Since $\partial_{x_{m}}$ is an inward normal vector field on ${\mathcal N}^{-} \subset M_{0}$, we are going to define the principal curvatures of ${\mathcal N}^{-} \subset M_{0}$. The principal curvatures of ${\mathcal N}^{+} \subset M_{0}$ are those of
${\mathcal N}^{-} \subset M_{0}$ with opposite sign.
We define the shape operator $S_{p} : T_{p} {\mathcal N}^{-} \rightarrow T_{p}{\mathcal N}^{-}$ as follows:
\begin{eqnarray}    \label{E:3.9}
S_{p}(\partial_{x_{\alpha}}|_{p}) & := & - ~ (\nabla^{LC}_{\partial_{x_{\alpha}}} \partial_{x_{m}})\big|_{p} ~ = ~
- \sum_{j = 1}^{m} \Gamma_{\alpha m}^{j}(p) ~ \partial_{x_{j}}|_{p}          \nonumber \\
& = & - \frac{1}{2} \sum_{j, s = 1}^{m} g^{js}(p) ( g_{m s, \alpha} + g_{s \alpha, m} - g_{\alpha m, s})(p) ~ \partial_{x_{j}}|_{p}   \nonumber  \\
& = & - \frac{1}{2} \sum_{j=1}^{m} g_{j \alpha, m}(p)  ~ \partial_{x_{j}}|_{p}
~ = ~ - \frac{1}{2} \sum_{\beta=1}^{m-1} g_{\alpha\beta, m}(p)  ~ \partial_{x_{\beta}}|_{p}  ,
\end{eqnarray}
\noindent
where we used the relation (\ref{E:2.33}) with $g^{js}(p) = \delta_{js}$ and  $g_{mj,k}(x, x_{m}) = 0$.
Since $(g_{\alpha\beta, m}(p))$ is a symmetric matrix, it is diagonalizable.
We initially choose a normal coordinate system on an open neighborhood of $p$ in ${\mathcal N}$ such that
\begin{eqnarray}   \label{E:3.10}
g_{\alpha \beta, m}(p) & = & \begin{cases} -2 \kappa_{\alpha} & \quad \text{for} \quad \alpha = \beta \\
0 & \quad \text{for} \quad \alpha \neq \beta . \end{cases}
\end{eqnarray}
\noindent
Then, $S_{p}(\partial_{x_{\alpha}}|_{p}) = \kappa_{\alpha} \partial_{x_{\alpha}}|_{p}$, where $\kappa_{\alpha}$ is a principal curvature of ${\mathcal N}^-$
embedded in $M$ at $p$.
Since $g^{\alpha\beta,m}(p) = - g_{\alpha\beta, m}(p)$ \cite{PS}, we get $g^{\alpha \beta, m}(p) = 2 \kappa_{\alpha} \delta_{\alpha\beta}$.
We define the $r$-mean curvature $H_{r}(p)$ of ${\mathcal N}^-$ by
\begin{eqnarray*}
H_{r}(p) & = & \frac{1}{\binom{m-1}{r}} \sigma_{r}(\kappa_{1}, \cdots, \kappa_{m-1})
~ = ~ \frac{r ! (m-1-r)!}{(m-1)!} \sigma_{r}(\kappa_{1}, \cdots, \kappa_{m-1}),
\end{eqnarray*}
\noindent
where $\sigma_{r} : {\mathbb R}^{m-1} \rightarrow {\mathbb R}$ is the $r$-th elementary symmetric function defined by
$\sigma_{r}(u_{1}, \cdots. u_{m-1}) = \sum_{1 \leq i_{1} < \cdots < i_{r} \leq m-1} u_{i_{1}} \cdots u_{i_{r}}$ \cite{ALM}.
In particular,
$H_{1}(p)$ and $H_{2}(p)$ are defined by
\begin{eqnarray}    \label{E:3.11}
H_{1}(p) & = & \frac{1}{m-1} \sum_{\alpha = 1}^{m-1} \kappa_{\alpha},
\qquad H_{2}(p) ~ = ~ \frac{2}{(m-1)(m-2)} \sum_{1 \leq \alpha < \beta \leq m-1} \kappa_{\alpha} \kappa_{\beta}.
\end{eqnarray}
\noindent
It was shown in \cite{PS} that
\begin{eqnarray}     \label{E:3.12}
& & \sum_{\alpha=1}^{m-1} g^{\alpha\alpha,m m}(p) ~ = ~ 8 \sum_{\alpha=1}^{m-1} \kappa_{\alpha}^{2} -
\sum_{\alpha=1}^{m-1} g_{\alpha\alpha,m m}(p),  \\
& & -\sum_{\alpha=1}^{m-1} g_{\alpha\alpha,m m}(p) ~ = ~
\tau_{M}(p) - \tau_{{\mathcal N}}(p) - 2 (m-1)^{2} H_{1}^{2}(p) + 3 (m-1)(m-2)H_{2}(p),    \nonumber \\
& & \int_{{\mathbb R}^{m-1}} |\xi|^{k} \sum_{\alpha , \beta ,\gamma ,\epsilon =1}^{m-1}\, g^{\alpha\beta,\gamma\epsilon} \xi_{\alpha} \xi_{\beta} \xi_{\gamma} \xi_{\epsilon} ~ d \xi ~ = ~ 0 \quad \text{for} \quad k < -3-m .   \nonumber
\end{eqnarray}
\noindent
We summarize these facts as follows (\cite{PS}).
\begin{lemma}   \label{Lemma:3.1}
We consider the boundary normal coordinate system on an open neighborhood $U_{\epsilon_{0}}$ of $p \in {\mathcal N}$ with metric tensor $g = (g_{ij})$
and $p = (0, \cdots, 0)$. Then, we have the following equalities:
\begin{eqnarray*}
& (1) & g^{\alpha\beta,\alpha\beta}(p) = - ~ \frac{1}{3} R_{\alpha\beta\beta\alpha}(p), \qquad
 g^{\alpha\alpha, \beta\beta}(p) = ~ \frac{2}{3} R_{\alpha\beta\beta\alpha}(p),  \\
& (2) & \partial_{x_{\alpha}} \partial_{x_{\alpha}} \ln \big| g \big|(p) = - \frac{2}{3} R_{\alpha\alpha}(p), \qquad
\tau_{{\mathcal N}}(p) = \sum_{\alpha, \beta=1}^{m-1} R_{\alpha\beta\beta\alpha}(p) = - \sum_{\alpha, \gamma=1}^{m-1} R_{\alpha\beta\alpha\beta}(p), \\
& (3) & g^{\alpha\beta,m}(p) = 2 \kappa_{\alpha} \delta_{\alpha\beta} = - g_{\alpha\beta,m}(p),  \qquad
 \int_{{\mathbb R}^{m-1}} |\xi|^{k} \sum_{\alpha ,\beta ,\gamma ,\epsilon =1}^{m-1} g^{\alpha\beta,\gamma\epsilon} \xi_{\alpha} \xi_{\beta} \xi_{\gamma} \xi_{\epsilon} d\xi ~ = ~ 0 \quad \text{for} \quad k < -3-m, \\
& (4) & \sum_{\alpha=1}^{m-1} g^{\alpha\alpha,m m}(p) = 8 \sum_{\alpha=1}^{m-1} \kappa_{\alpha}^{2} -
\sum_{\alpha=1}^{m-1} g_{\alpha\alpha,m m}(p),  \\
& (5) & \sum_{\alpha=1}^{m-1} g_{\alpha\alpha,m m}(p) = - ~ \left\{ \tau_{M}(p) - \tau_{{\mathcal N}}(p) - 2 (m-1)^{2} H_{1}^{2}(p) + 3 (m-1)(m-2) H_{2}(p) \right\},
\end{eqnarray*}
where $\tau_{M}(p)$ and $\tau_{{\mathcal N}}(p)$ are scalar curvatures of $M$ and ${\mathcal N}$ at $p \in {\mathcal N}$, respectively.
\end{lemma}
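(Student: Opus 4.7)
Plan: The lemma assembles consequences of the expansions (3.6)--(3.7), the diagonalization (3.10), and the formula (3.12), most of which have already been essentially proved in the text leading up to the statement. I would address the items in order. Items (1) and (2) are index specializations of (3.7) and (3.6) combined with the standard symmetries of the Riemann tensor. For (1), taking $(\gamma,\epsilon)=(\alpha,\beta)$ in (3.7) and using $R_{\alpha\alpha\beta\beta}=0$ gives the first identity; taking $(\gamma,\epsilon)=(\beta,\beta)$ produces a doubled term $-\tfrac{2}{3}R_{\alpha\beta\alpha\beta}(p)$, and $R_{\alpha\beta\alpha\beta}=-R_{\alpha\beta\beta\alpha}$ converts this into the second. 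For (2), (3.6) gives $\ln |g|(x,0) = -\tfrac{1}{3}R_{\mu\nu}(p)x^{\mu}x^{\nu}+O(|x|^{3})$ since $|g|(p)=1$, and differentiating twice yields $-\tfrac{2}{3}R_{\alpha\alpha}(p)$; the chain of equalities for $\tau_{\mathcal N}(p)$ is simply a rephrasing of (3.4) via $R_{abcd}=-R_{abdc}$.

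For (3), the relation $g^{\alpha\beta,m}(p)=-g_{\alpha\beta,m}(p)$ is obtained by differentiating $\sum_{\gamma} g^{\alpha\gamma}g_{\gamma\beta}=\delta^{\alpha}_{\beta}$ in $x_{m}$ at $p$ and using $g^{\alpha\gamma}(p)=\delta_{\alpha\gamma}$; combined with (3.10) this yields $g^{\alpha\beta,m}(p)=2\kappa_{\alpha}\delta_{\alpha\beta}$. The integral identity follows from a stronger pointwise vanishing: by (3.7), $\sum g^{\alpha\beta,\gamma\epsilon}(p)\,\xi_{\alpha}\xi_{\beta}\xi_{\gamma}\xi_{\epsilon}$ is a linear combination of $R_{\alpha\gamma\beta\epsilon}\xi_{\alpha}\xi_{\beta}\xi_{\gamma}\xi_{\epsilon}$, which vanishes because the tensor is antisymmetric in $(\alpha,\gamma)$ while $\xi_{\alpha}\xi_{\gamma}$ is symmetric; hence the integral is zero for every $k$, in particular for $k<-3-m$. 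For (4), differentiating $\sum_{\gamma}g^{\alpha\gamma}g_{\gamma\beta}=\delta^{\alpha}_{\beta}$ twice in $x_{m}$ and evaluating at $p$ yields $g^{\alpha\beta,mm}(p)+2\sum_{\gamma}g^{\alpha\gamma,m}(p)g_{\gamma\beta,m}(p)+g_{\alpha\beta,mm}(p)=0$; the middle sum evaluates to $-4\kappa_{\alpha}^{2}\delta_{\alpha\beta}$ by (3) and (3.10), and setting $\alpha=\beta$ and summing over $\alpha$ gives (4).

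For (5), I would simply invoke (3.12), which is cited from \cite{PS}. A self-contained derivation would proceed by writing $\tau_{M}(p)=\tau_{\mathcal N}(p)+2\sum_{\alpha}R^{M}_{\alpha m m\alpha}(p)$ (the $R^{M}_{mmmm}$ term vanishes and off-diagonal indices contribute in pairs), computing $R^{M}_{\alpha m m\alpha}(p)$ directly from the Christoffel symbols of the boundary normal metric (where $g_{jm,k}=0$ and $g_{mm}\equiv 1$ simplify the Christoffel computation dramatically), and converting the resulting combinations of $\kappa_{\alpha}$ into $H_{1}^{2}$ and $H_{2}$ via Newton's identity $(\sum\kappa_{\alpha})^{2}=\sum\kappa_{\alpha}^{2}+2\sum_{\alpha<\beta}\kappa_{\alpha}\kappa_{\beta}$. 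This last item is the only step with non-trivial computational content, and it is essentially the Gauss equation in disguise; the main obstacle is careful bookkeeping of sign conventions in the shape operator (3.9) and in the Christoffel-symbol computation of the ambient sectional curvatures $R^{M}_{\alpha m m\alpha}(p)$, in order to match the precise coefficients of $H_{1}^{2}$ and $H_{2}$ on the right-hand side of (5).
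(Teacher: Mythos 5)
Your derivations are correct and consistent with the paper's treatment: the paper does not prove Lemma \ref{Lemma:3.1} in detail but presents it as a summary of facts cited from \cite{PS}, together with the expansions (\ref{E:3.6})--(\ref{E:3.7}), the diagonalization (\ref{E:3.10}), and the quoted identity (\ref{E:3.12}), which is exactly the source material you work from. Your index specializations in (1)--(2), the differentiation of $\sum_{\gamma} g^{\alpha\gamma}g_{\gamma\beta}=\delta^{\alpha}_{\beta}$ once and twice in $x_{m}$ for (3) and (4), and the reliance on (\ref{E:3.12}) for (5) are all sound; the only addition beyond the paper is your cleaner observation that the quartic contraction in (3) vanishes pointwise by the antisymmetry of $R_{\alpha\gamma\beta\epsilon}$ in $(\alpha,\gamma)$, which subsumes the integral statement.
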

Using the above equalities and the integral formula (\ref{E:2.28}), we are going to compute the coefficients $\pi_{j}$ and $q_{k}$ in (\ref{E:1.7}).
For this purpose we need to compute the homogeneous symbol of $(\mu - R(\lambda))^{-1}$ by using the metric tensors and Lemma \ref{Lemma:3.1}, and then apply it
to the formula (\ref{E:2.28}).
In fact, this computation is very tedious and complicated. In order to avoid some of the complications and to simplify the presentation, we restrict the argument to the case
of $m = 3$, even though the method presented here works for a manifold of arbitrary dimension.
In the remaining part of this paper we assume that ${\mathcal N}$ is a $2$-dimensional closed Riemannian manifold. In this case (\ref{E:1.4}), (\ref{E:1.6})
and (\ref{E:1.7}) are rewritten as follows,
\begin{eqnarray}   \label{E:3.13}
& & \ln \Det \left( \Delta_{M} + \lambda \right) -
\ln \Det \left( \Delta_{M_{0}, D} + \lambda \right) ~ = ~ a_{0} + a_{1} \lambda +  \ln \Det R(\lambda),  \\
& & \ln \Det \left( \Delta_{M} + \lambda \right) - \ln \Det \left( \Delta_{M_{0}, D} + \lambda \right) ~ \sim ~
- c_{1} \lambda \ln \lambda + c_{3} \ln \lambda + c_{1} \lambda + 2 \sqrt{\pi} c_{2} \lambda^{\frac{1}{2}} - \sqrt{\pi} c_{4} \lambda^{-\frac{1}{2}} + \cdots,  \nonumber  \\
& & \ln \Det R (\lambda) ~ \sim ~ q_{0} \lambda \ln \lambda + q_{1} \lambda^{\frac{1}{2}} \ln \lambda + q_{2} \ln \lambda + \pi_{0} \lambda +
\pi_{1} \lambda^{\frac{1}{2}} +  \pi_{2} +  \pi_{3} \lambda^{-\frac{1}{2}} + \cdots,   \nonumber
\end{eqnarray}
\noindent
where  $a_{j} ~ = ~ \int_{{\mathcal N}} a_{j}(p) ~ d \vol({\mathcal N}) ~$ and
\begin{eqnarray}    \label{E:3.14}
~ \Tr \left( e^{-t \Delta_{M}} - e^{-t \Delta_{M_{0}, D}} \right) \sim \sum_{j=0}^{\infty} c_{j} t^{\frac{-3+j}{2}} \qquad \text{for} \quad t \rightarrow 0^{+},
\end{eqnarray}
\noindent
with $c_{0} = 0$ and $c_{j} = \int_{{\mathcal N}} c_{j}(p) ~ d \vol({\mathcal N})$; see eq.~(\ref{E:1.5}). The density $c_{j}(p)$
for $p \in {\mathcal N}$ is computed in Theorem 3.4.1 of \cite{Gi2} or Section 4.2 of \cite{Ki1} as follows. For $r_{0} = \rank {\mathcal E}$,
\begin{eqnarray}   \label{E:3.15}
c_{0}(p) = 0, \qquad c_{1}(p) = \frac{r_{0}}{8 \pi},  \qquad  c_{2}(p) = 0 .
\end{eqnarray}
\begin{lemma}    \label{Lemma:3.2}
For each $p \in {\mathcal N}$, the density $c_{3}(p)$ is given as
\begin{eqnarray*}
c_{3}(p) & = & r_{0} \left( \frac{\tau_{M}(p)}{64 \pi} + \frac{\tau_{{\mathcal N}}(p)}{192 \pi} - \frac{H_{1}^{2}(p)}{64 \pi} + \frac{H_{2}(p)}{64 \pi} \right)
+ \frac{1}{8 \pi} \Tr E (p).
\end{eqnarray*}
\end{lemma}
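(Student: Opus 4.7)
The plan is to express $c_{3}(p)$ as $-2$ times the Dirichlet boundary density at $p$ of the $t^{0}$ coefficient in the heat-trace expansion of $\Delta_{M_{0},D}$ along $\mathcal{N}^{\pm}$, and then rewrite the resulting geometric invariants in terms of the objects listed in the statement. First I would note that $\Delta_{M}$ and $\Delta_{M_{0},D}$ have identical local form away from $\mathcal{N}$, so the interior contributions to the two heat traces in (\ref{E:3.14}) cancel pointwise, as do any Dirichlet-boundary contributions from a non-empty $\partial M$ (shared by both operators). Hence $c_{3}(p)$ equals $-1$ times the total Dirichlet boundary $a_{3}$-density at $p$ from $\mathcal{N}^{+}$ and $\mathcal{N}^{-}$. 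Because the principal curvatures on $\mathcal{N}^{+}$ and $\mathcal{N}^{-}$ differ only by a sign, but the only second-fundamental-form invariants entering $a_{3}^{\partial}$ with constant smearing are the quadratic pairings $L_{aa}L_{bb}$ and $L_{ab}L_{ab}$, the two densities agree and I pick up an overall factor of $2$.

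Next I would invoke the explicit Dirichlet-boundary $a_{3}$ formula of Theorem~3.4.1 in \cite{Gi2} (equivalently Section~4.2 of \cite{Ki1}): in dimension $m = 3$ the density is a linear combination, with universal numerical coefficients, of $\Tr E$, $\tau_{M}$, the normal Ricci contraction $\sum_{\alpha}R_{\alpha m m \alpha}$, and the quadratic invariants $L_{aa}L_{bb}$, $L_{ab}L_{ab}$ of the shape operator. I would then substitute the dimension-specific identities $L_{aa}L_{bb} = (\kappa_{1}+\kappa_{2})^{2} = 4H_{1}^{2}$ and $L_{ab}L_{ab} = \kappa_{1}^{2}+\kappa_{2}^{2} = 4H_{1}^{2}-2H_{2}$, valid since $m-1 = 2$, and use the Gauss equation combined with $\tau_{M} = \sum_{\alpha,\beta}R_{\beta\alpha\alpha\beta} + 2\sum_{\alpha}R_{\alpha m m \alpha}$ to rewrite $\sum_{\alpha}R_{\alpha m m \alpha}$ as $\frac{1}{2}(\tau_{M}-\tau_{\mathcal{N}}) + H_{2}$. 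Grouping the resulting coefficients against a common denominator then yields the claimed expression, with $96/(2\cdot 384 \cdot 4\pi) = 1/(8\pi)$ for the $\Tr E$ piece and the geometric piece collapsing to $\frac{r_{0}}{768\pi}(12\tau_{M} + 4\tau_{\mathcal{N}} - 12H_{1}^{2} + 12H_{2})$.

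The main obstacle is keeping the sign and normalization conventions consistent. The paper's operator convention $\Delta_{M} = -(\nabla^{2} + E)$ reverses the sign of $E$ relative to Gilkey's convention $D = -\nabla^{2} + E$; the shape-operator convention $S_{p} = -\nabla^{LC}_{\cdot}\partial_{x_{m}}$ with $\partial_{x_{m}}$ inward on $\mathcal{N}^{-}$ fixes the sign of each $\kappa_{\alpha}$; and in deriving the expression for $\sum_{\alpha}R_{\alpha m m \alpha}$ one must use the Gauss-equation sign consistent with these choices (so that, as a sanity check, a unit sphere in flat $\mathbb{R}^{3}$ gives $\tau_{M}=0$, $\tau_{\mathcal{N}}=2$, $H_{2}=1$, and $\sum_{\alpha}R_{\alpha m m \alpha}=0$). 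Once these conventions are aligned, matching the coefficient of each invariant against the stated answer is a routine algebraic check using Lemma~\ref{Lemma:3.1}.
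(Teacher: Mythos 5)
Your proposal is correct and follows the same two main steps as the paper: start from Gilkey's Dirichlet boundary $a_{3}$ density (Theorem~3.4.1 of \cite{Gi2}), extract the factor of $2$ from the two faces ${\mathcal N}^{\pm}$ (noting that the second-fundamental-form invariants that survive with constant smearing are quadratic and hence insensitive to the sign flip in the $\kappa_{\alpha}$), and then eliminate the normal Ricci contraction in favor of $\tau_{M}$, $\tau_{\mathcal N}$, $H_{1}^{2}$, $H_{2}$. Where you genuinely diverge is in that last elimination: the paper computes $\sum_{\alpha} R_{\alpha 3 \alpha 3}(p)$ from scratch in boundary normal coordinates, expanding the Christoffel symbols explicitly (the chain of identities (\ref{E:3.17}), (\ref{E:3.55})) and then substituting Lemma~\ref{Lemma:3.1}(5), whereas you invoke the Gauss equation together with the splitting $\tau_{M} = \sum_{\alpha,\beta} R_{\alpha\beta\beta\alpha} + 2\sum_{\alpha} R_{\alpha m m \alpha}$ to produce the same identity $\sum_{\alpha} R_{\alpha m m \alpha} = \tfrac{1}{2}(\tau_{M} - \tau_{\mathcal N}) + H_{2}$ directly. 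Your route is shorter and more covariant, and has the advantage of making the result transparently independent of the coordinate choice; the paper's route has the advantage of being consistent with the method already set up for Lemma~\ref{Lemma:3.1}, and of not requiring the reader to track sign conventions in the Gauss equation separately. Both yield $\frac{r_{0}}{768\pi}(12\tau_{M} + 4\tau_{\mathcal N} - 12 H_{1}^{2} + 12 H_{2})$ for the geometric piece. One small slip: the displayed normalization for the $\Tr E$ piece should read $\frac{2\cdot 96}{384\cdot 4\pi} = \frac{1}{8\pi}$, not $\frac{96}{2\cdot 384\cdot 4\pi}$ (which equals $\frac{1}{32\pi}$); the final coefficient $\frac{1}{8\pi}$ you report is nonetheless correct.
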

\begin{proof}
From Theorem 3.4.1 of \cite{Gi2}, we get
\begin{eqnarray}   \label{E:3.16}
c_{3}(p) & = & (-2) \cdot  r_{0} \cdot \frac{-1}{384} \cdot \frac{1}{4 \pi} \left\{ 16 \tau_{M}(p) + 8 \sum_{\alpha = 1}^{2} R_{\alpha 3 \alpha 3}(p) +
7 (\kappa_{1} + \kappa_{2})^{2}  - 10 (\kappa_{1}^{2} + \kappa_{2}^{2}) \right\} ~ + \frac{1}{8 \pi} \Tr E (p)     \nonumber \\
& = & r_{0} \left\{ \frac{\tau_{M}(p)}{48 \pi} + \frac{1}{96 \pi} \sum_{\alpha = 1}^{2} R_{\alpha 3 \alpha 3}(p) - \frac{H_{1}^{2}(p)}{64 \pi} +
\frac{5 H_{2}(p)}{192 \pi} \right\} + \frac{1}{8 \pi} \Tr E (p) .
\end{eqnarray}
\noindent
Using (\ref{E:2.4}) at a point $p \in {\mathcal N}$, we get
\begin{eqnarray}     \label{E:3.17}
\sum_{\alpha = 1}^{2} R_{\alpha 3 \alpha 3}(p) & = &
\sum_{\alpha =1}^2\, ~ \langle \nabla^{LC}_{\partial_{x_{\alpha}}} \nabla^{LC}_{\partial_{x_{3}}} \partial_{x_{\alpha}} -
\nabla^{LC}_{\partial_{x_{3}}} \nabla^{LC}_{\partial_{x_{\alpha}}} \partial_{x_{\alpha}}, ~ \partial_{x_{3}} \rangle\big|_{p}  \\
& = & \sum_{\alpha=1}^{2} \sum_{l=1}^{3} \langle \nabla^{LC}_{\partial_{x_{\alpha}}} (\Gamma_{3 \alpha}^{l} \partial_{x_{l}}) -
\nabla^{LC}_{\partial_{x_{3}}} (\Gamma_{\alpha \alpha}^{l} \partial_{x_{l}}), ~ \partial_{x_{3}} \rangle\big|_{p}    \nonumber  \\
& = & \sum_{\alpha=1}^{2} \left\{ \Gamma^{3}_{3 \alpha, \alpha}(p) - \Gamma_{\alpha \alpha, 3}^{3}(p) \right\} +
\sum_{\alpha=1}^{2} \sum_{l=1}^{3} \left\{ \Gamma^{l}_{3\alpha}(p) \Gamma^{3}_{\alpha l}(p) - \Gamma^{l}_{\alpha \alpha}(p) \Gamma^{3}_{3 l}(p) \right\}.  \nonumber
\end{eqnarray}
\noindent
By the relation (\ref{E:2.33}), we get the following equalities:
\begin{eqnarray}      \label{E:3.55}
\Gamma^{3}_{3j} (x) & = & \frac{1}{2} \sum_{s=1}^{3} g^{3s} \left( g_{js,3} + g_{s3,j} - g_{3j,s} \right) ~ = ~ \frac{1}{2} \left( g_{j3,3} + g_{33,j} - g_{3j,3} \right)
~ = ~ 0,  \\
\Gamma^{3}_{\alpha \alpha}(x) & = & \frac{1}{2} \sum_{s=1}^{3} g^{3 s} \left( g_{\alpha s, \alpha} + g_{s \alpha, \alpha} - g_{\alpha \alpha, s} \right)
~ = ~ \frac{1}{2} \left( g_{\alpha 3, \alpha} + g_{3 \alpha, \alpha} - g_{\alpha \alpha, 3} \right) ~ = ~ - \frac{1}{2} g_{\alpha \alpha, 3}(x),    \nonumber   \\
\Gamma^{l}_{3 \alpha}(p) & = &  \frac{1}{2} \sum_{s=1}^{3} g^{ls} \left( g_{\alpha s, 3} + g_{s 3, \alpha} - g_{3 \alpha,s} \right)\big|_{p}
~ = ~ \frac{1}{2} \left( g_{\alpha l, 3} + g_{l 3, \alpha} - g_{3 \alpha, l} \right)\big|_{p}
~ = ~ \frac{1}{2} g_{\alpha l, 3}(p) ~ = ~ - \kappa_{\alpha} \delta_{\alpha l},     \nonumber  \\
\Gamma^{3}_{\alpha l}(p) & = &  \frac{1}{2} \sum_{s=1}^{3} g^{3 s} \left( g_{l s, \alpha} + g_{s \alpha, l} - g_{\alpha l,s} \right)\big|_{p}
~ = ~ \frac{1}{2} \left( g_{l 3, \alpha} + g_{3 \alpha, l} - g_{\alpha l, 3} \right)\big|_{p}
~ = ~ - \frac{1}{2} g_{\alpha l, 3}(p) ~ = ~ \kappa_{\alpha} \delta_{\alpha l},    \nonumber
\end{eqnarray}
\noindent
which shows that
\begin{eqnarray*}
\sum_{\alpha = 1}^{2} R_{\alpha 3 \alpha 3}(p) & = & - \sum_{\alpha = 1}^{2} \Gamma^{3}_{\alpha \alpha, 3}(p) - (\kappa_{1}^{2} + \kappa_{2}^{2})
~ = ~  \frac{1}{2} \sum_{\alpha = 1}^{2} g_{\alpha \alpha, 3 3}(p) - 4 H_{1}^{2}(p) + 2 H_{2}(p)  \\
& = & - ~ \frac{1}{2} \left( \tau_{M}(p) - \tau_{{\mathcal N}}(p) \right) - H_{2}(p),
\end{eqnarray*}
\noindent
where in the last equality we used the statement (5) in Lemma \ref{Lemma:3.1}.
This together with (\ref{E:3.16}) yields the result.
\end{proof}
We now compute the densities $\pi_{j}(p)$ and $q_{j}(p)$ for $p \in {\mathcal N}$  by using (\ref{E:2.27}) and (\ref{E:2.28}).
The following lemma is straightforward and will turn out to be very helpful.
\begin{lemma}   \label{Lemma:3.33}
Let ${\mathbb C}^+ = {\mathbb C} -\{r\in {\mathbb R} | r\leq 0\}$. For $z\in {\mathbb C}^+$ let $\gamma$ be a counterclockwise contour in ${\mathbb C}^+$ with
$z$ inside $\gamma$.
Then for $\re s > 2$ the following integrals are all well defined and one computes:
\begin{eqnarray*}
&(1)& \frac{1}{2 \pi i} \int_{\gamma} \frac{\mu^{-s}}{\mu - z} d\mu = z^{-s}, \quad
\frac{1}{2 \pi i} \int_{\gamma} \frac{\mu^{-s}}{(\mu - z)^{2}} d\mu = - s z^{-s-1}, \quad
\frac{1}{2 \pi i} \int_{\gamma} \frac{\mu^{-s}}{(\mu - z)^{3}} d\mu = \frac{1}{2} s (s + 1) z^{-s-2},  \\
&(2)& \frac{1}{4 \pi^{2}} \int_{{\mathbb R}^{2}} ( |\xi|^{2} + 1)^{-\frac{s}{2}} d \xi ~ = ~ \frac{1}{2 \pi} \frac{1}{s-2},  \qquad
 \frac{1}{4 \pi^{2}} \int_{{\mathbb R}^{2}} ( |\xi|^{2} + 1)^{-\frac{s}{2} -1} d \xi ~ = ~ \frac{1}{2 \pi} \frac{1}{s},  \\
& & \frac{1}{4 \pi^{2}} \int_{{\mathbb R}^{2}} \xi_{1}^{2} ( |\xi|^{2} + 1)^{-\frac{s}{2} - 2} d \xi ~ = ~ \frac{1}{2 \pi} \frac{1}{s (s + 2)},  \\
& & \frac{1}{4 \pi^{2}} \int_{{\mathbb R}^{2}} \xi_{1}^{2} \xi_{2}^{2} ( |\xi|^{2} + 1)^{-\frac{s}{2} - 3} d \xi ~ = ~ \frac{1}{2 \pi} \frac{1}{s (s + 2) (s + 4)},  \\
& & \frac{1}{4 \pi^{2}} \int_{{\mathbb R}^{2}} \xi_{1}^{4} ( |\xi|^{2} + 1)^{-\frac{s}{2} - 3} d \xi ~ = ~ \frac{3}{2 \pi} \frac{1}{s (s + 2)(s + 4)}.
\end{eqnarray*}
\end{lemma}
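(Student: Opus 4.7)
My plan is to treat the two parts of the lemma independently, since they are genuinely independent computations: part (1) is complex analysis on $\mathbb{C}^{+}$, and part (2) is real integration on $\mathbb{R}^{2}$. In both parts the hypothesis $\re s > 2$ is used only to guarantee absolute convergence of the integrals that actually diverge without it; the first integral in (2) is the critical one.

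For part (1), since $\mu^{-s} = e^{-s \log \mu}$ is holomorphic on $\mathbb{C}^{+}$ (principal branch of $\log$) and the contour $\gamma$ stays in $\mathbb{C}^{+}$ enclosing $z$, the integrand $\mu^{-s}/(\mu-z)^{n+1}$ has a single pole of order $n+1$ at $\mu = z$ inside $\gamma$. The generalized Cauchy integral formula then gives
\[
\frac{1}{2\pi i}\int_{\gamma}\frac{\mu^{-s}}{(\mu-z)^{n+1}}\,d\mu \;=\; \frac{1}{n!}\frac{d^{n}}{d\mu^{n}}\mu^{-s}\bigg|_{\mu=z},
\]
which evaluated for $n=0,1,2$ yields exactly $z^{-s}$, $-s\,z^{-s-1}$, and $\tfrac{1}{2}s(s+1)z^{-s-2}$.

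For part (2), the plan is to pass to polar coordinates $\xi = r(\cos\theta,\sin\theta)$ on $\mathbb{R}^{2}$ and reduce every integral to a Beta-type integral in $r$. The angular integrals $\int_{0}^{2\pi}\cos^{2k}\theta\sin^{2l}\theta\,d\theta$ are standard ($\pi$ for the mixed $k=l=1$ case after accounting for the $\tfrac{1}{4}$ from $\cos^2\sin^2=\tfrac{1}{4}\sin^2(2\theta)$, and analogous reductions for pure powers), while the radial integrals reduce under $u = r^{2}+1$ to
\[
\int_{0}^{\infty}\frac{r^{2k+1}}{(r^{2}+1)^{a}}\,dr \;=\; \tfrac{1}{2}\,B(k+1,\,a-k-1) \;=\; \tfrac{1}{2}\,\frac{k!\,\Gamma(a-k-1)}{\Gamma(a)}.
\]
For the first identity I take $k=0$, $a=s/2$ to get $\tfrac{1}{s-2}$; for $\int\xi_{1}^{2}(|\xi|^{2}+1)^{-s/2-2}$ I use $k=1$, $a=s/2+2$, etc. Each case then produces a product $\Gamma(a-k-1)/\Gamma(a)$ that collapses to a rational function of $s$ via the recursion $\Gamma(w+1)=w\Gamma(w)$, matching the stated right-hand sides.

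I do not expect any genuine obstacle here. The only point that requires minor care is keeping track of the combinatorial factors coming from the angular averages, in particular the factor $3$ in the $\xi_{1}^{4}$ integral (which originates from $\int_{0}^{2\pi}\cos^{4}\theta\,d\theta = 3\pi/4$ versus $\int_{0}^{2\pi}\cos^{2}\theta\sin^{2}\theta\,d\theta = \pi/4$), and matching the convergence window $\re s > 2$ to the worst case — the first radial integral. Once these bookkeeping factors are in place, all five identities in (2) follow from a single Beta-integral template.
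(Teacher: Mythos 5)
Your proposal is correct, and since the paper declares the lemma ``straightforward'' without supplying a proof, your argument fills exactly the intended role. Part (1) is indeed a direct application of the generalized Cauchy integral formula to the holomorphic function $\mu^{-s}$ on $\mathbb{C}^+$, and part (2) reduces correctly under polar coordinates and the substitution $u = r^2$ (or $u = r^2 + 1$) to Beta-type integrals, with the angular factors $\int_0^{2\pi}\cos^2\theta\,d\theta = \pi$, $\int_0^{2\pi}\cos^2\theta\sin^2\theta\,d\theta = \pi/4$, and $\int_0^{2\pi}\cos^4\theta\,d\theta = 3\pi/4$ supplying the stated rational coefficients. One small remark: the phrase ``$\pi$ for the mixed $k=l=1$ case after accounting for the $\tfrac14$'' is ambiguous as written (the value is $\pi/4$, and $\pi$ is the value of $\int_0^{2\pi}\sin^2(2\theta)\,d\theta$ before the $\tfrac14$ is applied), but your subsequent bookkeeping makes clear you have the right constants; you might rephrase to avoid the apparent contradiction. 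Your observation that $\re s > 2$ is dictated by the first radial integral while the remaining integrals in (2) converge already for $\re s > 0$ is also accurate.
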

\subsection{The coefficients $\pi_{0}$ and $q_{0}$}
From (\ref{E:2.27}) and (\ref{E:2.28}), we note that
\begin{eqnarray*}
r_{-1} \left(x, \xi, \frac{\lambda}{|\lambda|}, \mu \right)|_{x=p} & = & (\mu - 2 \sqrt{|\xi|^{2} + 1})^{-1}
\end{eqnarray*}
\noindent
which shows that
\begin{eqnarray*}
\frac{1}{(2 \pi)^{2}} \int_{T_{p}^{\ast}{\mathcal N}}
\frac{1}{2 \pi i} \int_{\gamma} \mu^{-s} \Tr r_{-1} \left( x, \xi, \frac{\lambda}{|\lambda|}, \mu \right)|_{x=p}
\,\, d\mu d \xi & = &
\frac{r_{0}}{(2 \pi)^{2}} \int_{T_{p}^{\ast}{\mathcal N}} 2^{-s} (|\xi|^{2} + 1)^{-\frac{s}{2}} d \xi  \\
& = & \frac{r_{0}}{2 \pi} \cdot \frac{2^{-s}}{s-2}.
\end{eqnarray*}
\noindent
Hence,
\begin{eqnarray}     \label{E:3.19}
\pi_{0}(p) ~ = ~ - r_{0} \left( \frac{1}{4 \pi} \ln 2 - \frac{1}{8 \pi} \right), \qquad
q_{0}(p) ~ = ~ - \frac{r_{0}}{8 \pi}.
\end{eqnarray}
\noindent
Denoting the volume form on ${\mathcal N}$ as before by $d \vol({\mathcal N})$, these results yield the following equalities by integration,
\begin{eqnarray}     \label{E:3.20}
\pi_{0} & = & \int_{{\mathcal N}} \pi_{0}(p) ~ d \vol({\mathcal N}) ~ = ~
r_{0} \left( - \frac{1}{4 \pi} \ln 2 + \frac{1}{8 \pi} \right) \vol({\mathcal N}),  \\
q_{0} & = & \int_{{\mathcal N}} q_{0}(p) ~ d \vol({\mathcal N}) ~ = ~ - \frac{r_{0}}{8 \pi} \vol({\mathcal N}).   \nonumber
\end{eqnarray}
\noindent
{\it Remark} : If we compare the coefficients of $\lambda \ln \lambda$ in the two asymptotic expansions in (\ref{E:3.13}), we see that $q_{0} = - c_{1}$,
which together with (\ref{E:3.15}) implies the second equality of (\ref{E:3.20}).
\subsection{The coefficients $\pi_{1}$ and $q_{1}$}
Since $r_{-2} \left( x, \xi, \frac{\lambda}{|\lambda|}, \mu \right)$ is an odd function with respect to $\xi$,
the densities vanish, {\it i.e.} $\pi_{1}(p) = q_{1}(p) = 0$, and hence  $\pi_{1} = q_{1} = 0$.
\subsection{The coefficients $\pi_{2}$ and $q_{2}$}
We recall from (\ref{E:2.27}) that
\begin{eqnarray*}
& & r_{-3} (x, \xi, \lambda, \mu)|_{x=p}   \\
& = & \left( \mu - 2 \sqrt{|\xi|^{2} + \lambda} \right)^{-1} \Big\{ \sum_{|\omega |=2} \frac{1}{\omega !} \partial_{\xi}^{\omega} \theta_{1} \cdot D_{x}^{\omega} r_{-1}
+ \partial_{\xi} \theta_{1} \cdot D_{x} r_{-2} \\
& &\hspace{5.0cm}+ \partial_{\xi} \theta_{0} \cdot D_{x} r_{-1} + \theta_{0} \cdot r_{-2}
 + \theta_{-1} \cdot r_{-1} \Big\}\big|_{x = p}  \\
& =: & (\Aa) + (\Bb) + (\Cc) + (\Dd) + (\Ee).
\end{eqnarray*}
\noindent
For two integrable functions $f(\xi)$ and $g(\xi)$ on ${\mathbb R}^{2}$, we define an equivalence relation "$~ \approx ~$" as follows:
\begin{eqnarray*}
f & \approx & g  \qquad \text{if and only if} \qquad \int_{{\mathbb R}^{2}} f(\xi) ~ d \xi ~ = ~ \int_{{\mathbb R}^{2}} g(\xi) ~ d \xi.
\end{eqnarray*}
\noindent
In the remaining part of this section we perform a complicated and tedious computation in order to find $(\Aa ) - (\Ee )$.
We keep in mind that we are using the boundary normal coordinates on an open neighborhood of $p \in {\mathcal N}$.
\begin{eqnarray*}
(\Aa) & = & (\mu - 2 \sqrt{|\xi|^{2} + \lambda})^{-1} \left( \sum_{|\omega |=2} \frac{1}{\omega !} \partial_{\xi}^{\omega} \theta_{1} \cdot D_{x}^{\omega} r_{-1} \right)\bigg|_{x = p}  \\
& = & (\mu - 2 \sqrt{|\xi|^{2} + \lambda})^{-1} \left\{ \frac{1}{2}
\sum_{\alpha =1}^{2}\, \partial_{\xi_{\alpha}} \partial_{\xi_{\alpha}} \left( 2 \sqrt{|\xi|^{2} + \lambda} \right) \cdot (-1)
\partial_{x_{\alpha}} \partial_{x_{\alpha}} \left(\mu - 2 \sqrt{|\xi|^{2} + \lambda} \right)^{-1}  \right.  \\
& & \left. + \partial_{\xi_{1}} \partial_{\xi_{2}} \left( 2 \sqrt{|\xi|^{2} + \lambda} \right) \cdot (-1)
\partial_{x_{1}} \partial_{x_{2}} \left(\mu - 2 \sqrt{|\xi|^{2} + \lambda} \right)^{-1} \right\}\bigg|_{x = p} . \\
\end{eqnarray*}
\noindent
It is straightforward that
\begin{eqnarray*}
& & - \frac{1}{2} \partial_{\xi_{\alpha}} \partial_{\xi_{\alpha}} \left( 2 \sqrt{|\xi|^{2} + \lambda} \right)\bigg|_{x=p} ~ = ~
\frac{- g^{\alpha \alpha}}{\sqrt{|\xi|^{2} + \lambda}} + \frac{(\sum_{\gamma =1}^2\,g^{\alpha\gamma} \xi_{\gamma})(\sum_{\kappa =1}^2\,g^{\alpha\kappa} \xi_{\kappa})}
{(|\xi|^{2} + \lambda)^{3/2}} \bigg|_{x=p},   \\
& & \partial_{x_{\alpha}} \partial_{x_{\alpha}} \left(\mu - 2 \sqrt{|\xi|^{2} + \lambda} \right)^{-1}\big|_{x = p} ~ = ~ \frac{\sum_{\mu ,\nu =1}^2\,
g^{\mu\nu, \alpha\alpha} \xi_{\mu}\xi_{\nu}}{\left(\mu - 2 \sqrt{|\xi|^{2} + \lambda} \right)^{2} \sqrt{|\xi|^{2} + \lambda}} \bigg|_{x=p},
\end{eqnarray*}
which shows that
\begin{eqnarray*}
& & - \frac{1}{2} \sum_{\alpha =1}^2\, \partial_{\xi_{\alpha}} \partial_{\xi_{\alpha}} \left( 2 \sqrt{|\xi|^{2} + \lambda} \right) \cdot
\partial_{x_{\alpha}} \partial_{x_{\alpha}} \left(\mu - 2 \sqrt{|\xi|^{2} + \lambda} \right)^{-1}\big|_{x = p}  \\
& \approx & \frac{- \sum_{\alpha ,\kappa =1}^2\, g^{\kappa\kappa,\alpha\alpha} \xi_{\kappa}^{2}}{\left(\mu - 2 \sqrt{|\xi|^{2} + \lambda} \right)^{2} (|\xi|^{2} + \lambda)} +
\frac{\sum_{\alpha ,\theta =1}^2 \,g^{\theta\theta,\alpha\alpha} \xi_{\alpha}^{2} \xi_{\theta}^{2}}{\left(\mu - 2 \sqrt{|\xi|^{2} + \lambda} \right)^{2} (|\xi|^{2} + \lambda)^{2}}  \bigg|_{x=p}\\
& \approx & \frac{- \frac{2}{3} \tau_{{\mathcal N}}(p) \xi_{1}^{2}}{\left(\mu - 2 \sqrt{|\xi|^{2} + \lambda} \right)^{2} (|\xi|^{2} + \lambda)} +
\frac{\frac{2}{3} \tau_{{\mathcal N}}(p) \xi_{1}^{2} \xi_{2}^{2}}{\left(\mu - 2 \sqrt{|\xi|^{2} + \lambda} \right)^{2} (|\xi|^{2} + \lambda)^{2}}.
\end{eqnarray*}
\noindent
Similarly,
\begin{eqnarray*}
& & - ~ \partial_{\xi_{1}} \partial_{\xi_{2}} \left( 2 \sqrt{|\xi|^{2} + \lambda} \right) \cdot
\partial_{x_{1}} \partial_{x_{2}} \left(\mu - 2 \sqrt{|\xi|^{2} + \lambda} \right)^{-1}  \big|_{x=p}\\
& \approx & \frac{4 g^{12,12} \xi_{1}^{2} \xi_{2}^{2}}{\left(\mu - 2 \sqrt{|\xi|^{2} + \lambda} \right)^{2} (|\xi|^{2} + \lambda)^{2}} \big|_{x=p} ~ = ~
\frac{- \frac{4}{3} R_{1221} \xi_{1}^{2} \xi_{2}^{2}}{\left(\mu - 2 \sqrt{|\xi|^{2} + \lambda} \right)^{2} (|\xi|^{2} + \lambda)^{2}} \big|_{x=p} \\
& = & \frac{- \frac{2}{3} \tau_{{\mathcal N}}(p) \xi_{1}^{2} \xi_{2}^{2}}{\left(\mu - 2 \sqrt{|\xi|^{2} + \lambda} \right)^{2} (|\xi|^{2} + \lambda)^{2}}.
\end{eqnarray*}
\noindent
Adding the above two terms, we get
\begin{eqnarray*}
(\Aa) & \approx &  \frac{- \frac{2}{3} ~ \tau_{{\mathcal N}}(p) ~ \xi_{1}^{2}}{(\mu - 2 \sqrt{|\xi|^{2} + \lambda})^{3}(|\xi|^{2} + \lambda)} ,
\end{eqnarray*}
\noindent
which leads to
\begin{eqnarray}    \label{E:3.22}
\frac{1}{(2 \pi)^{2}}\int_{T_{p}^{\ast}{\mathcal N}}
\frac{1}{2 \pi i} \int_{\gamma} \mu^{-s} ~ \Tr (\Aa)\left( x, \xi, \frac{\lambda}{|\lambda|}, \mu \right) ~ d\mu ~ d \xi & = &
- ~ r_{0} \cdot \frac{\tau_{{\mathcal N}}(p)}{24 \pi} \cdot 2^{-s} \cdot \frac{s+1}{s+2}.
\end{eqnarray}
\noindent
The other terms are computed in the same way. Each term is evaluated at $x = p \in {\mathcal N}$,
which we sometimes omit for simple presentation. The results of the other cases are given as follows.
\begin{eqnarray*}
(\Bb) & = & \frac{\partial_{\xi} \theta_{1} \cdot D_{x} r_{-2}}{\mu - 2 \sqrt{|\xi|^{2} + \lambda}}\bigg|_{x = p}  \\
& = & \frac{(-i) \sum_{\alpha =1}^2\, \left( \partial_{\xi_{\alpha}} 2 \sqrt{|\xi|^{2} + \lambda}\right) \cdot \partial_{x_{\alpha}}
\left( ( \mu - 2 \sqrt{|\xi|^{2} + \lambda})^{-1} ( \sum_{\beta =1}^2\, \partial_{\xi_{\beta}} \theta_{1} \cdot D_{x_{\beta}} r_{-1} + \theta_{0} \cdot r_{-1}) \right)}{\mu - 2 \sqrt{|\xi|^{2} + \lambda}}  \\
& \approx & \frac{- 2 \left( \sum_{\alpha ,\gamma =1}^2\, g^{\gamma\alpha,\gamma\alpha} + \frac{1}{2} \sum_{\alpha =1}^2\, \partial_{x_{\alpha}} \partial_{x_{\alpha}} \ln \big| g \big|(p) \right) \xi_{1}^{2}}{(\mu - 2 \sqrt{|\xi|^{2} + \lambda})^{3} (|\xi|^{2} + \lambda)}
~ + ~ \frac{-4 \sum_{\alpha =1}^2\, \left( \partial_{x_{\alpha}} \omega_{\alpha} \right) \xi_{\alpha}^{2}}{(\mu - 2 \sqrt{|\xi|^{2} + \lambda})^{3} (|\xi|^{2} + \lambda)}\\
& = & \frac{- 2 \left( - \sum_{\alpha ,\gamma =1}^2\, \frac{1}{3} R_{\gamma\alpha\alpha\gamma} + \frac{1}{2} \sum_{\alpha =1}^2\, (-\frac{2}{3} R_{\alpha\alpha}) \right) \xi_{1}^{2}}
{(\mu - 2 \sqrt{|\xi|^{2} + \lambda})^{3} (|\xi|^{2} + \lambda)}
~ + ~ \frac{-4 \sum_{\alpha =1}^2\, \left( \partial_{x_{\alpha}} \omega_{\alpha} \right) \xi_{\alpha}^{2}}{(\mu - 2 \sqrt{|\xi|^{2} + \lambda})^{3} (|\xi|^{2} + \lambda)} \\
& \approx & \frac{\frac{4}{3} \tau_{{\mathcal N}}(p) \xi_{1}^{2} -4 \sum_{\alpha =1}^2 \left( \partial_{x_{\alpha}} \omega_{\alpha} \right) \xi_{1}^{2}}{(\mu - 2 \sqrt{|\xi|^{2} + \lambda})^{3} (|\xi|^{2} + \lambda)},
\end{eqnarray*}
\noindent
which leads to
\begin{eqnarray}   \label{E:3.23}
& & \frac{1}{(2 \pi)^{2}}\int_{T_{p}^{\ast}{\mathcal N}}
 \frac{1}{2 \pi i} \int_{\gamma} \mu^{-s} ~ \Tr (\Bb)\left(x, \xi, \frac{\lambda}{|\lambda|}, \mu \right) ~ d\mu ~ d \xi    \nonumber  \\
& = & r_{0} \cdot \frac{\tau_{{\mathcal N}}(p)}{12 \pi} \cdot 2^{-s} \cdot \frac{s+1}{s+2}
~ - ~ \frac{1}{4 \pi} \left( \Tr \sum_{\alpha} \partial_{x_{\alpha}} \omega_{\alpha}\big|_{x = p} \right)
\cdot 2^{-s} \cdot \frac{s+1}{s+2}.
\end{eqnarray}
\begin{eqnarray}   \label{E:3.24}
(\Cc) & = & \frac{\sum_{\alpha =1}^2\, \partial_{\xi_{\alpha}} \theta_{0} \cdot D_{x_{\alpha}} r_{-1}}{\mu - 2 \sqrt{|\xi|^{2} + \lambda}}\bigg|_{x = p} ~ = ~
\frac{(-i) \sum_{\alpha =1}^2\, \partial_{\xi_{\alpha}} \theta_{0} \cdot \partial_{x_{\alpha}}
(\mu - 2 \sqrt{|\xi|^{2} + \lambda})^{-1}}{\mu - 2 \sqrt{|\xi|^{2} + \lambda}}\bigg|_{x = p}      \nonumber \\
& = & \frac{(-i) \sum_{\alpha =1}^2\, \partial_{\xi_{\alpha}} \theta_{0} \cdot \sum_{\gamma ,\delta =1}^2\, g^{\gamma \delta, \alpha} \xi_{\gamma} \xi_{\delta}}{(\mu - 2 \sqrt{|\xi|^{2} + \lambda})^{3} \sqrt{|\xi|^{2} + \lambda}}\bigg|_{x = p} ~ = ~ 0,
\end{eqnarray}
\noindent
since $g^{\gamma \delta, \alpha}(p) = 0$.
\begin{eqnarray*}
(\Dd) & = & \frac{\theta_{0} \cdot r_{-2}}{\mu - 2 \sqrt{|\xi|^{2} + \lambda}}\big|_{x = p} ~ = ~
\frac{\theta_{0}^{2}}{(\mu - 2 \sqrt{|\xi|^{2} + \lambda})^{3}}\big|_{x = p} ~ \approx ~
\frac{\left(-4 \sum_{\alpha =1}^2\, \omega_{\alpha} \omega_{\alpha} \right) \xi_{1}^{2}}{(\mu - 2 \sqrt{|\xi|^{2} + \lambda})^{3} (|\xi|^{2} + \lambda)},
\end{eqnarray*}
which leads to
\begin{eqnarray}  \label{E:3.26}
\frac{1}{(2 \pi)^{2}}\int_{T_{p}^{\ast}{\mathcal N}}
\frac{1}{2 \pi i} \int_{\gamma} \mu^{-s} ~ \Tr (\Dd)\left(x, \xi, \frac{\lambda}{|\lambda|}, \mu\right) ~ d\mu ~ d \xi  =
- \frac{1}{4 \pi} \left( \Tr \sum_{\alpha =1}^2\, \omega_{\alpha} \omega_{\alpha}\big|_{x = p} \right) \cdot 2^{-s} \cdot \frac{s+1}{s+2}.
\end{eqnarray}
\begin{eqnarray*}
& & (\Ee) ~ = ~ \frac{\theta_{-1} \cdot r_{-1}}{\mu - 2 \sqrt{|\xi|^{2} + \lambda}}\big|_{x = p} ~ = ~
\frac{\theta_{-1}}{(\mu - 2 \sqrt{|\xi|^{2} + \lambda})^{2}}\big|_{x = p} ~ = ~
\frac{1}{2(\mu - 2 \sqrt{|\xi|^{2} + \lambda})^{2} \sqrt{|\xi|^{2} + \lambda}} \times  \\
& & \Big\{ - \sum_{| \omega| =2} \frac{2}{\omega !} \partial_{\xi}^{\omega} \alpha_{1} \cdot D_{x}^{\omega} \alpha_{1}
 - \partial_{\xi} \theta_{0} \cdot D_{x} \alpha_{1} - \partial_{\xi} \alpha_{1} \cdot D_{x} \theta_{0}
- (\alpha_{0}^{2} + \beta_{0}^{2})  \\
& & \hspace{1.0cm}+ A(x,0) (\alpha_{0} - \beta_{0}) - \partial_{x_{m}} (\alpha_{0} - \beta_{0}) + 2 p_{0} \Big\}\big|_{x = p}  \\
& & =: ~ (1) + (2) + (3) + (4) + (5) + (6) + (7).
\end{eqnarray*}
\begin{eqnarray*}
(1) & = & \frac{- \sum_{| \omega | =2} \frac{2}{\omega !} \partial_{\xi}^{\omega} \alpha_{1} \cdot D_{x}^{\omega} \alpha_{1}}{2(\mu - 2 \sqrt{|\xi|^{2} + \lambda})^{2} \sqrt{|\xi|^{2} + \lambda}}\big|_{x = p}
~ \approx ~ \frac{\tau_{{\mathcal N}}(p)}{6} \cdot \frac{\xi_{1}^{2}}{(\mu - 2 \sqrt{|\xi|^{2} + \lambda})^{2} ( |\xi|^{2} + \lambda)^{3/2}} ,
\end{eqnarray*}
\noindent
which leads to
\begin{eqnarray*}
\frac{1}{(2 \pi)^{2}}\int_{T_{p}^{\ast}{\mathcal N}}
\frac{1}{2 \pi i} \int_{\gamma} \mu^{-s} ~ \Tr (1)\left(x, \xi, \frac{\lambda}{|\lambda|}, \mu\right) ~ d\mu ~ d \xi & = &
- r_{0} \cdot \frac{\tau_{{\mathcal N}}(p)}{24 \pi} \cdot 2^{-s} \cdot \frac{1}{s+2}.
\end{eqnarray*}
\begin{eqnarray*}
(2) & = & \frac{- \partial_{\xi} \theta_{0} \cdot D_{x} \alpha_{1}}
{2(\mu - 2 \sqrt{|\xi|^{2} + \lambda})^{2} \sqrt{|\xi|^{2} + \lambda}}\big|_{x = p}
~ = ~ \frac{i \sum_{\alpha =1}^2\, \partial_{\xi_{\alpha}} \theta_{0} \cdot \sum_{\beta ,\gamma =1}^2\,
g^{\beta\gamma,\alpha} \xi_{\beta} \xi_{\gamma}}{4(\mu - 2 \sqrt{|\xi|^{2} + \lambda})^{2} (|\xi|^{2} + \lambda)}\big|_{x = p}
~ = ~ 0,
\end{eqnarray*}
\noindent
since $g^{\beta\gamma,\alpha}(p) = 0$.
\begin{eqnarray*}
(3) & = & \frac{- \sum_{\alpha =1}^2\, \partial_{\xi_{\alpha}} \alpha_{1} \cdot D_{x_{\alpha}} \theta_{0}}{2(\mu - 2 \sqrt{|\xi|^{2} + \lambda})^{2} \sqrt{|\xi|^{2} + \lambda}}\bigg|_{x = p}  \\
 &\approx & \left\{ \frac{\sum_{\alpha ,\gamma =1}^2\, g^{\gamma\alpha,\gamma\alpha} \xi_{\alpha}^{2} + \frac{1}{2} \sum_{\alpha =1}^2\,
 \partial_{\alpha} \partial_{\alpha} \log | g |(x) \xi_{\alpha}^{2}}{2(\mu - 2 \sqrt{|\xi|^{2} + \lambda})^{2} \left(|\xi|^{2} + \lambda\right)^{3/2}}  - \frac{\sum_{\kappa ,\theta ,\gamma ,\alpha =1}^2\, g^{\kappa\theta,\gamma\alpha}\xi_{\alpha} \xi_{\gamma} \xi_{\kappa} \xi_{\theta}}{4(\mu - 2 \sqrt{|\xi|^{2} + \lambda})^{2} \left(|\xi|^{2} + \lambda\right)^{5/2}}  \right\} \Id\\
 & & + ~
\frac{\sum_{\alpha =1}^2\, \left( \partial_{x_{\alpha}} \omega_{\alpha} \right) \xi_{\alpha}^{2}}{(\mu - 2 \sqrt{|\xi|^{2} + \lambda})^{2} \left(|\xi|^{2} + \lambda\right)^{3/2}}      \\
& \approx & \frac{\sum_{\alpha ,\gamma =1}^2\, (- \frac{1}{3}) R_{\gamma\alpha\alpha\gamma} \xi_{\alpha}^{2} + \frac{1}{2} \sum_{\alpha =1}^2\, (-\frac{2}{3}) R_{\alpha\alpha}  \xi_{\alpha}^{2}}{2(\mu - 2 \sqrt{|\xi|^{2} + \lambda})^{2} \left(|\xi|^{2} + \lambda\right)^{3/2}} \Id  ~ + ~
\frac{\sum_{\alpha =1}^2\, \left( \partial_{x_{\alpha}} \omega_{\alpha} \right) \xi_{\alpha}^{2}}{(\mu - 2 \sqrt{|\xi|^{2} + \lambda})^{2} \left(|\xi|^{2} + \lambda\right)^{3/2}}  \\
& \approx & \frac{- \frac{1}{3} \tau_{{\mathcal N}}(p)  \xi_{1}^{2}}{(\mu - 2 \sqrt{|\xi|^{2} + \lambda})^{2} \left(|\xi|^{2} + \lambda\right)^{3/2}} \Id
+ \frac{\sum_{\alpha =1}^2\, \left( \partial_{x_{\alpha}} \omega_{\alpha} \right) \xi_{1}^{2}}{(\mu - 2 \sqrt{|\xi|^{2} + \lambda})^{2} \left(|\xi|^{2} + \lambda\right)^{3/2}}\big|_{x = p},
\end{eqnarray*}
\noindent
which leads to
\begin{eqnarray*}
& & \frac{1}{(2 \pi)^{2}}\int_{T_{p}^{\ast}{\mathcal N}}
\frac{1}{2 \pi i} \int_{\gamma} \mu^{-s} ~ \Tr (3)\left(x, \xi, \frac{\lambda}{|\lambda|}, \mu\right) ~ d\mu ~ d \xi \\
& = & r_{0} \cdot \frac{\tau_{{\mathcal N}}(p)}{12 \pi} \cdot 2^{-s} \cdot \frac{1}{s+2}
- \frac{1}{4 \pi} \left( \sum_{\alpha =1}^2\, \Tr \partial_{x_{\alpha}} \omega_{\alpha}\big|_{x = p} \right)
\cdot 2^{-s} \cdot \frac{1}{s+2} ~ .
\end{eqnarray*}
\begin{eqnarray*}
(4) & = & \frac{- (\alpha_{0}^{2} + \beta_{0}^{2})}{2(\mu - 2 \sqrt{|\xi|^{2} + \lambda})^{2} \sqrt{|\xi|^{2} + \lambda}}\bigg|_{x = p}  \\
& = & \frac{\left( \sum_{\alpha =1}^2\, \omega_{\alpha} \xi_{\alpha} \right)^{2}}{(\mu - 2 \sqrt{|\xi|^{2} + \lambda})^{2} \left(|\xi|^{2} + \lambda\right)^{3/2}}
~ + ~ \frac{- \left( \frac{1}{2} A(x,0)^{2} - \frac{A(x,0)}{\alpha_{1}} \partial_{x_{m}} \alpha_{1} +
\frac{1}{2 \alpha_{1}^{2}} \left( \partial_{x_{m}} \alpha_{1} \right)^{2}\right)}{2(\mu - 2 \sqrt{|\xi|^{2} + \lambda})^{2} \sqrt{|\xi|^{2} + \lambda}} \\
& \approx & \frac{- H_{1}^{2}}{(\mu - 2 \sqrt{|\xi|^{2} + \lambda})^{2} \sqrt{|\xi|^{2} + \lambda}}
~ + ~ \frac{(2 H_{1}^{2}) \xi_{1}^{2} + (\sum_{\alpha =1}^2\, \omega_{\alpha} \omega_{\alpha}) \xi_{1}^{2}}{(\mu - 2 \sqrt{|\xi|^{2} + \lambda})^{2} \left(|\xi|^{2} + \lambda\right)^{3/2}}    \\
& & - ~ \frac{(4 H_{1}^{2} - 2 H_{2}) \xi_{1}^{4} + 2 H_{2} \xi_{1}^{2} \xi_{2}^{2}}{4 (\mu - 2 \sqrt{|\xi|^{2} + \lambda})^{2} \left(|\xi|^{2} + \lambda\right)^{5/2}} ,
\end{eqnarray*}
\noindent
which leads to
\begin{eqnarray*}
& & \frac{1}{(2 \pi)^{2}}\int_{T_{p}^{\ast}{\mathcal N}}
\frac{1}{2 \pi i} \int_{\gamma} \mu^{-s} ~ \Tr (4)\left(x, \xi, \frac{\lambda}{|\lambda|}, \mu\right) ~ d\mu ~ d \xi ~ = ~
 - \frac{1}{4 \pi} \sum_{\alpha =1}^2\, \Tr \left( \omega_{\alpha} \omega_{\alpha}\big|_{x=p} \right) \cdot 2^{-s} \cdot \frac{1}{s+2}  \\
&  & \hspace{1.5 cm}  + ~ r_{0} \left\{ \frac{H_{1}^{2}(p)}{4 \pi} \cdot 2^{-s} \left( \frac{s}{s+2} + \frac{3}{(s+2)(s+4)} \right) - \frac{H_{2}(p)}{4 \pi} \cdot 2^{-s} \frac{1}{(s+2)(s+4)} \right\} .
\end{eqnarray*}
\begin{eqnarray*}
(5) & = & \frac{A(x,0) (\alpha_{0} - \beta_{0})}{2(\mu - 2 \sqrt{|\xi|^{2} + \lambda})^{2} \sqrt{|\xi|^{2} + \lambda}}\bigg|_{x=p}
~ = ~
\frac{A(x,0)^{2} - \frac{A(x,0)}{\alpha_{1}} \partial_{x_{m}} \alpha_{1}}{2(\mu - 2 \sqrt{|\xi|^{2} + \lambda})^{2} \sqrt{|\xi|^{2} + \lambda}}\bigg|_{x=p}  \\
& \approx & \frac{ 2 H_{1}^{2}}{(\mu - 2 \sqrt{|\xi|^{2} + \lambda})^{2} \sqrt{|\xi|^{2} + \lambda}}  +
\frac{\left( -  2 H_{1}^{2} \right) \xi_{1}^{2}}{(\mu - 2 \sqrt{|\xi|^{2} + \lambda})^{2} \left(|\xi|^{2} + \lambda\right)^{3/2}} ,
\end{eqnarray*}
\noindent
which leads to
\begin{eqnarray*}
 \frac{1}{(2 \pi)^{2}}\int_{T_{p}^{\ast}{\mathcal N}}
\frac{1}{2 \pi i} \int_{\gamma} \mu^{-s} ~ \Tr (5)\left(x, \xi, \frac{\lambda}{|\lambda|}, \mu\right) ~ d\mu ~ d \xi
& = & - ~ r_{0} \cdot \frac{H_{1}^{2}(p)}{2 \pi} \cdot 2^{-s} \cdot \frac{s+1}{s+2}.
\end{eqnarray*}
\begin{eqnarray*}
(6) & = & \frac{- \partial_{x_{m}} (\alpha_{0} - \beta_{0})}{2(\mu - 2 \sqrt{|\xi|^{2} + \lambda})^{2} \sqrt{|\xi|^{2} + \lambda}}\bigg|_{x=p} ~ = ~
\frac{- \partial_{x_{m}} \left( A(x,0) - \frac{1}{\alpha_{1}} \partial_{x_{m}} \alpha_{1} \right)}{2(\mu - 2 \sqrt{|\xi|^{2} + \lambda})^{2} \sqrt{|\xi|^{2} + \lambda}}\bigg|_{x=p}  \\
& \approx & \left\{ \frac{-4H_{1}^{2} + H_{2} - \frac{1}{2} (\tau_{M}(x) - \tau_{{\mathcal N}}(x))}{2(\mu - 2 \sqrt{|\xi|^{2} + \lambda})^{2} \sqrt{|\xi|^{2} + \lambda}}
~ + ~ \frac{\left( 12 H_{1}^{2} - 5 H_{2} + \frac{1}{2} (\tau_{M}(x) - \tau_{{\mathcal N}}(x)) \right) \xi_{1}^{2}}{2(\mu - 2 \sqrt{|\xi|^{2} + \lambda})^{2} \left(|\xi|^{2} + \lambda\right)^{3/2}} \right. \\
& & \left. + ~ \frac{(-4H_{1}^{2} + 2H_{2}) \xi_{1}^{4} - 2 H_{2} \xi_{1}^{2} \xi_{2}^{2}}{(\mu - 2 \sqrt{|\xi|^{2} + \lambda})^{2} \left(|\xi|^{2} + \lambda\right)^{5/2}} \right\}\bigg|_{x=p} ,
\end{eqnarray*}
\noindent
which leads to
\begin{eqnarray*}
& & \frac{1}{(2 \pi)^{2}}\int_{T_{p}^{\ast}{\mathcal N}}
\frac{1}{2 \pi i} \int_{\gamma} \mu^{-s} ~ \Tr (6)\left(x, \xi, \frac{\lambda}{|\lambda|}, \mu\right) ~ d\mu ~ d \xi  \\
& = & r_{0} \left\{ \frac{H_{1}^{2}(p)}{2 \pi} \cdot 2^{-s} \cdot \frac{s+1}{s+4} ~ - ~ \frac{H_{2}(p)}{8 \pi} \cdot 2^{-s} \cdot \frac{s^{2} + s - 4}{(s+2)(s+4)} ~
 + ~ \frac{1}{16 \pi} \left( \tau_{M}(p) - \tau_{{\mathcal N}}(p) \right) \cdot 2^{-s} \cdot \frac{s+1}{s+2} \right\}.
\end{eqnarray*}
\begin{eqnarray*}
(7) & = & \frac{2p_{0}}{2(\mu - 2 \sqrt{|\xi|^{2} + \lambda})^{2} \sqrt{|\xi|^{2} + \lambda}}\bigg|_{x=p} ~ = ~
- \frac{\sum_{\alpha , \beta =1}^2\,g^{\alpha\beta}(\partial_{x_{\alpha}} \omega_{\beta} + \omega_{\alpha} \omega_{\beta} -
\sum_{\gamma =1}^2\,\Gamma^{\gamma}_{\alpha\beta} \omega_{\gamma}) + E}{(\mu - 2 \sqrt{|\xi|^{2} + \lambda})^{2} \sqrt{|\xi|^{2} + \lambda}}\bigg|_{x=p}  \\
& = & - \frac{ \sum_{\alpha , \beta =1}^2\, (\partial_{x_{\alpha}} \omega_{\alpha} + \omega_{\alpha} \omega_{\alpha}) + E}{(\mu - 2 \sqrt{|\xi|^{2} + \lambda})^{2} \sqrt{|\xi|^{2} + \lambda}}\bigg|_{x=p},
\end{eqnarray*}
\noindent
which leads to
\begin{eqnarray*}
& &  \frac{1}{(2 \pi)^{2}}\int_{T_{p}^{\ast}{\mathcal N}}
 \frac{1}{2 \pi i} \int_{\gamma} \mu^{-s} ~ \Tr (7)\left(x, \xi, \frac{\lambda}{|\lambda|}, \mu\right) ~ d\mu ~ d \xi  \\
& = & \frac{1}{4 \pi} \Tr \left\{\sum_{\alpha =1}^2\,(\partial_{x_{\alpha}} \omega_{\alpha}\big|_{x=p} + \omega_{\alpha} \omega_{\alpha}\big|_{x=p}) + E(p) \right\} \cdot 2^{-s}.
\end{eqnarray*}
\noindent
Hence,
\begin{eqnarray}   \label{E:3.255}
& & \frac{1}{(2 \pi)^{2}}\int_{T_{p}^{\ast}{\mathcal N}}
\frac{1}{2 \pi i} \int_{\gamma} \mu^{-s} ~ \Tr (\Ee)\left(x, \xi, \frac{\lambda}{|\lambda|}, \mu \right) ~ d\mu ~ d \xi   \\
& = & r_{0} \cdot 2^{-s} \left\{
\frac{\tau_{{\mathcal N}}(p)}{24 \pi} \frac{1}{s+2} ~ + ~ \frac{1}{16 \pi} \left( \tau_{M}(p) - \tau_{{\mathcal N}}(p) \right)  \frac{s+1}{s+2}
~ + ~ \frac{H_{1}^{2}(p)}{4 \pi} \frac{(s-1)(s+1)}{(s+2)(s+4)}
~ - ~ \frac{H_{2}(p)}{8 \pi}  \frac{s-1}{s+4} \right\}  \nonumber  \\
& & + ~ \frac{1}{4 \pi} \Tr \sum_{\alpha =1}^2 \left( \partial_{x_{\alpha}} \omega_{\alpha} + \omega_{\alpha} \omega_{\alpha} \right)\big|_{x=p} \cdot 2^{-s}  \frac{s+1}{s+2}
  ~ + ~ \frac{1}{4 \pi} \left( \Tr E(p) \right) \cdot 2^{-s}.  \nonumber
\end{eqnarray}
\noindent
Summing up from (\ref{E:3.22}) to (\ref{E:3.255}), we have
\begin{eqnarray}    \label{E:3.27}
J_{2}(s, p) & := & \frac{1}{(2 \pi)^{2}} \int_{T_{p}^{\ast}{\mathcal N}}
\frac{1}{2 \pi i} \int_{\gamma} \mu^{-s} \Tr r_{-1-2} \left(p, \xi, \frac{\lambda}{|\lambda|}, \mu\right) ~ d\mu ~ d \xi \\
& = & r_{0} \cdot 2^{-s} \left\{
\frac{\tau_{{\mathcal N}}(p)}{24 \pi} ~ + ~ \frac{1}{16 \pi} \left( \tau_{M}(p) - \tau_{{\mathcal N}}(p) \right) \frac{s+1}{s+2}
 ~ + ~ \frac{H_{1}^{2}(p)}{4 \pi} \frac{(s-1)(s+1)}{(s+2)(s+4)}
- \frac{H_{2}(p)}{8 \pi} \frac{s-1}{s+4} \right\}  \nonumber  \\
& & ~ + ~ \frac{1}{4 \pi} \left( \Tr E(p) \right) \cdot 2^{-s}.  \nonumber
\end{eqnarray}
\noindent
This leads to the following result, which is one of the main results in this paper.
\begin{theorem} \label{Theorem:3.3}
Let ${\mathcal N}$ be a $2$-dimensional closed Riemannian manifold. Then the densities $q_{2}(p)$ and $\pi_{2}(p)$ are given as follows:
\begin{eqnarray*}
 q_{2}(p) & = & \frac{1}{2} J_{2}(s, p)\big|_{s=0} ~ = ~ \frac{1}{2} \cdot r_{0} \left\{ \frac{\tau_{M}(p)}{32 \pi} ~ + ~ \frac{\tau_{{\mathcal N}}(p)}{96 \pi}
~ - ~ \frac{H_{1}^{2}(p)}{32 \pi} ~ + ~ \frac{H_{2}(p)}{32 \pi} \right\} ~ + ~ \frac{1}{8 \pi} \left( \Tr E(p) \right) , \\
\pi_{2}(p) & = & - \frac{\partial}{\partial s}\big|_{s=0} J_{2}(s, p) \\
& = & r_{0} \left\{
\left(  \frac{\tau_{M}(p)}{32 \pi} ~ + ~ \frac{\tau_{{\mathcal N}}(p)}{96 \pi} ~ - ~ \frac{H_{1}^{2}(p)}{32 \pi} ~ + ~ \frac{H_{2}(p)}{32 \pi}  \right) \ln 2
~ - ~ \frac{1}{64 \pi} \left( \tau_{M}(p) - \tau_{{\mathcal N}}(p) \right) \right.    \\
& & \left. \hspace{2.0 cm} ~ - ~ \frac{3 H_{1}^{2}(p)}{128 \pi} ~ + ~ \frac{5 H_{2}(p)}{128 \pi} \right\}
 ~ + ~ \frac{1}{4 \pi} \left( \Tr E(p) \right) \cdot  \ln 2.   \nonumber
\end{eqnarray*}
\end{theorem}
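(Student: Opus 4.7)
The proof is essentially routine once the key computational work has already been assembled in equation (\ref{E:3.27}). The plan is to apply the integral formulas (\ref{E:2.28}) directly to the expression for $J_2(s,p)$, which gives the pointwise density before integration over $\mathcal{N}$. Specifically, by the definition
\[
q_2(p) = \frac{1}{2} J_2(s,p)\big|_{s=0}, \qquad \pi_2(p) = -\frac{\partial}{\partial s}\bigg|_{s=0} J_2(s,p),
\]
both densities are obtained as elementary functions of $s$ evaluated or differentiated at a single point.

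For $q_2(p)$, I would simply substitute $s=0$ into the rational factors appearing in (\ref{E:3.27}). The values $\frac{s+1}{s+2}\big|_{s=0} = \frac{1}{2}$, $\frac{(s-1)(s+1)}{(s+2)(s+4)}\big|_{s=0} = -\frac{1}{8}$, and $\frac{s-1}{s+4}\big|_{s=0} = -\frac{1}{4}$ immediately combine with the coefficients of $\tau_M(p)-\tau_{\mathcal{N}}(p)$, $H_1^2(p)$, and $H_2(p)$. The only nontrivial arithmetic is to combine $\frac{\tau_{\mathcal{N}}(p)}{24\pi}$ with the $\tau_{\mathcal{N}}(p)$ contribution from $\frac{\tau_M(p)-\tau_{\mathcal{N}}(p)}{32\pi}$ to obtain $\frac{\tau_{\mathcal{N}}(p)}{96\pi}$, after which the claimed formula for $q_2(p)$ drops out.

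For $\pi_2(p)$, I would apply the product rule to $J_2(s,p) = 2^{-s} \cdot F(s)$, where $F(s)$ collects the bracketed curvature terms together with the trace-of-$E$ term. Since $\frac{d}{ds}2^{-s}\big|_{s=0} = -\ln 2$, the derivative splits as
\[
-\frac{\partial}{\partial s}\bigg|_{s=0} J_2(s,p) = (\ln 2)\, F(0) \;-\; F'(0),
\]
so that $(\ln 2) F(0)$ reproduces $2 q_2(p) \ln 2$, giving exactly the coefficient of $\ln 2$ in the claimed formula. It remains to evaluate $F'(0)$, which reduces to the three elementary derivatives
\[
\frac{d}{ds}\bigg|_{s=0}\frac{s+1}{s+2} = \tfrac{1}{4}, \qquad \frac{d}{ds}\bigg|_{s=0}\frac{(s-1)(s+1)}{(s+2)(s+4)} = \tfrac{3}{32}, \qquad \frac{d}{ds}\bigg|_{s=0}\frac{s-1}{s+4} = \tfrac{5}{16},
\]
and the resulting $-F'(0)$ produces the remaining terms $-\frac{1}{64\pi}(\tau_M(p)-\tau_{\mathcal{N}}(p)) - \frac{3H_1^2(p)}{128\pi} + \frac{5H_2(p)}{128\pi}$.

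Since all the geometric content — the identification of $g^{\alpha\beta,\gamma\epsilon}(p)$, $g^{\alpha\beta,m}(p)$, $g_{\alpha\alpha,mm}(p)$ with curvature quantities via Lemma \ref{Lemma:3.1}, and the integral identities of Lemma \ref{Lemma:3.33} — has already been absorbed into $J_2(s,p)$, there is no real obstacle here: the proof is a direct substitution and one differentiation. The only place to be careful is bookkeeping when combining the constant curvature contribution $\frac{\tau_{\mathcal{N}}(p)}{24\pi}$ (which has no $s$-dependent rational factor) with the $s$-dependent ones, to avoid errors in the rational coefficients of $\tau_{\mathcal{N}}(p)$ in both formulas.
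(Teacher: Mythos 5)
Your proposal is correct and follows exactly the paper's (implicit) argument: starting from the explicit form of $J_{2}(s,p)$ in equation (\ref{E:3.27}), one obtains $q_{2}(p)$ by substituting $s=0$ and $\pi_{2}(p)$ by differentiating with the product rule applied to $2^{-s}F(s)$. Your evaluations $\tfrac{s+1}{s+2}\big|_{0}=\tfrac12$, $\tfrac{(s-1)(s+1)}{(s+2)(s+4)}\big|_{0}=-\tfrac18$, $\tfrac{s-1}{s+4}\big|_{0}=-\tfrac14$, the combination $\tfrac{1}{24\pi}-\tfrac{1}{32\pi}=\tfrac{1}{96\pi}$, and the three derivatives $\tfrac14$, $\tfrac{3}{32}$, $\tfrac{5}{16}$ all check out and reproduce the stated formulas.
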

\vspace{0.2 cm}
If $g$ is a product metric and $\Delta_{M} = - \partial_{x_{3}}^{2} + \Delta_{{\mathcal N}}$ on a collar neighborhood of ${\mathcal N}$ with $\Delta_{{\mathcal N}}$ a Laplacian on ${\mathcal N}$, then we have the
following result,
which is already obtained in \cite{Le2,PW}.
\begin{corollary}  \label{Corollary:3.5}
If $g$ is a product metric and $\Delta_{M} = - \partial_{x_{3}}^{2} + \Delta_{{\mathcal N}}$ on a collar neighborhood of ${\mathcal N}$,  we get
$\pi_{2} = \ln 2 \cdot \left( \zeta_{\Delta_{{\mathcal N}}}(0) + \Dim \Ker \Delta_{{\mathcal N}} \right)$.
\end{corollary}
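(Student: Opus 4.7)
The plan is to specialize Theorem \ref{Theorem:3.3} to the product metric and then recognize the integrated density as the standard heat-kernel expression for $\zeta_{\Delta_{\mathcal{N}}}(0)+\Dim \Ker \Delta_{\mathcal{N}}$ in dimension two. First, when $g$ is a product metric on the collar $U_{\epsilon_{0}} \cong (-\epsilon_{0}, \epsilon_{0}) \times \mathcal{N}$, one has $g_{\alpha\beta,m}(p) = 0$ for all $\alpha, \beta$, and hence by (\ref{E:3.10}) the principal curvatures vanish: $\kappa_{1}(p) = \kappa_{2}(p) = 0$, giving $H_{1}(p) = H_{2}(p) = 0$. Moreover, since the extra normal direction contributes a flat summand, $\tau_{M}(p) = \tau_{\mathcal{N}}(p)$ at every $p \in \mathcal{N}$. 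Finally, because $\Delta_{M} = -\partial_{x_{3}}^{2} + \Delta_{\mathcal{N}}$ on the collar and the connection/endomorphism decomposition (\ref{E:2.1}) is natural under orthogonal product, the bundle endomorphism $E$ for $\Delta_{M}$ agrees with the endomorphism $E_{\mathcal{N}}$ for $\Delta_{\mathcal{N}}$ when restricted to $\mathcal{N}$.

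Substituting these simplifications into the formula of Theorem \ref{Theorem:3.3}, all $H_{1}$, $H_{2}$ and $\tau_{M}-\tau_{\mathcal{N}}$ contributions drop out, leaving
\begin{eqnarray*}
\pi_{2}(p) & = & r_{0}\left(\frac{\tau_{M}(p)}{32\pi}+\frac{\tau_{\mathcal{N}}(p)}{96\pi}\right)\ln 2 + \frac{\ln 2}{4\pi}\Tr E(p) \;=\; \ln 2 \cdot \left(\frac{r_{0}\,\tau_{\mathcal{N}}(p)}{24\pi} + \frac{\Tr E(p)}{4\pi}\right),
\end{eqnarray*}
after combining $\frac{1}{32\pi}+\frac{1}{96\pi} = \frac{1}{24\pi}$.

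Next I would invoke the standard Gilkey heat-kernel expansion for the Laplace-type operator $\Delta_{\mathcal{N}}$ on the closed $2$-manifold $\mathcal{N}$,
\begin{eqnarray*}
\Tr e^{-t\Delta_{\mathcal{N}}} & \sim & \frac{1}{4\pi t}\int_{\mathcal{N}} r_{0}\,d\vol(\mathcal{N}) \;+\; \frac{1}{4\pi}\int_{\mathcal{N}}\left(\frac{r_{0}\,\tau_{\mathcal{N}}}{6}+\Tr E\right)d\vol(\mathcal{N}) \;+\; O(t),
\end{eqnarray*}
together with the general identity $\zeta_{\Delta_{\mathcal{N}}}(0) = [\text{constant term of heat expansion}] - \Dim \Ker \Delta_{\mathcal{N}}$ valid in even dimension. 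This yields
\begin{eqnarray*}
\zeta_{\Delta_{\mathcal{N}}}(0) + \Dim \Ker \Delta_{\mathcal{N}} & = & \int_{\mathcal{N}}\left(\frac{r_{0}\,\tau_{\mathcal{N}}(p)}{24\pi} + \frac{\Tr E(p)}{4\pi}\right)d\vol(\mathcal{N}).
\end{eqnarray*}

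Integrating the pointwise formula for $\pi_{2}(p)$ over $\mathcal{N}$ and comparing with the displayed heat-invariant identity immediately gives $\pi_{2} = \ln 2 \cdot (\zeta_{\Delta_{\mathcal{N}}}(0) + \Dim \Ker \Delta_{\mathcal{N}})$. There is essentially no genuine obstacle here: the only thing one must be careful about is verifying that the endomorphism $E$ appearing in Theorem \ref{Theorem:3.3} really coincides with the intrinsic endomorphism of $\Delta_{\mathcal{N}}$ in the product case, which is immediate from (\ref{E:2.1}) once one notes that $\partial_{x_{3}}$ generates a flat, parallel direction so no new curvature or Weitzenb\"ock term is introduced along it.
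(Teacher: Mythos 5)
Your proposal is correct and follows essentially the same route as the paper: specialize Theorem \ref{Theorem:3.3} to the product case (so $H_1 = H_2 = 0$, $\tau_M = \tau_{\mathcal N}$, and $E$ restricts to the endomorphism of $\Delta_{\mathcal N}$), obtain $\pi_2(p) = \ln 2 \cdot \bigl( \tfrac{r_0 \tau_{\mathcal N}(p)}{24\pi} + \tfrac{\Tr E(p)}{4\pi} \bigr)$, and then recognize the integral as the constant heat-trace coefficient of $\Delta_{\mathcal N}$ via Gilkey's Theorem 3.3.1, which in dimension two equals $\zeta_{\Delta_{\mathcal N}}(0) + \Dim\Ker\Delta_{\mathcal N}$.
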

\begin{proof}
If $g$ is a product metric near ${\mathcal N}$, it is well known that $H_{1}(p) = H_{2}(p) = 0$ and $\tau_{M}(p) = \tau_{{\mathcal N}}(p)$.
Hence, Theorem \ref{Theorem:3.3} shows that $\pi_{2}(p) = \left( r_{0} \cdot \frac{\tau_{{\mathcal N}}(p)}{24 \pi} + \frac{1}{4 \pi} \cdot \Tr E(p) \right) \ln 2$,
which together with Theorem 3.3.1 in \cite{Gi2} yields the result.
\end{proof}
Comparing the asymptotic expansions in (\ref{E:3.13}), we have the following equalities by (\ref{E:3.15}) and (\ref{E:3.19}).
\begin{eqnarray}     \label{E:3.28}
a_{0}(p) & = & - \pi_{2}(p),   \\
a_{1}(p) & = & c_{1}(p) - \pi_{0}(p) ~ = ~ r_{0} \left\{ \frac{1}{8 \pi} - \left( - \frac{1}{4 \pi} \ln 2 + \frac{1}{8 \pi} \right) \right\}
~ = ~ r_{0} \cdot \frac{1}{4 \pi} \ln 2.   \nonumber
\end{eqnarray}
\noindent
This leads to the following result.
\begin{corollary}  \label{Corollary:3.4}
Let ${\mathcal N}$ be a $2$-dimensional closed Riemannian manifold. Then $P(\lambda)$ in (\ref{E:1.4}) is given as follows.
\begin{eqnarray*}
P(\lambda) & = & a_{1} \lambda + a_{0} ~ = ~ \int_{{\mathcal N}} \left(  r_{0} \cdot \frac{\ln 2}{4 \pi} ~ \lambda ~ - ~ \pi_{2}(p) \right) ~ d \vol({\mathcal N}) \\
& = & \frac{r_{0} \cdot \vol({\mathcal N})  \ln 2}{4 \pi} ~ \lambda ~ - ~ \int_{{\mathcal N}} \pi_{2}(p)  ~ d \vol({\mathcal N}) .
\end{eqnarray*}
\end{corollary}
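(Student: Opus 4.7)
The plan is to read off $P(\lambda)$ directly from the asymptotic expansions already collected in (\ref{E:3.13}) and the densities just computed. Since ${\mathcal N}$ is $2$-dimensional, $m = 3$ and $\nu = [m/2] + 1 = 2$, so by (\ref{E:1.4}) one has $P(\lambda) = a_{0} + a_{1}\lambda$, and in particular $P$ is integrated densities in $a_{0}(p), a_{1}(p)$ over ${\mathcal N}$.

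The key step is to match the large-$\lambda$ asymptotic expansions on the two sides of the identity
\begin{eqnarray*}
\ln\Det(\Delta_{M}+\lambda) - \ln\Det(\Delta_{M_{0},D}+\lambda) &=& a_{0} + a_{1}\lambda + \ln\Det R(\lambda),
\end{eqnarray*}
using the two expansions displayed in (\ref{E:3.13}). Comparing coefficients of $\lambda\ln\lambda$, of $\lambda$, of $\ln\lambda$, and of the constant term, one extracts (in the pointwise densities) the relations $q_{0}(p) = -c_{1}(p)$, $q_{2}(p) = c_{3}(p)$, $a_{1}(p) = c_{1}(p) - \pi_{0}(p)$, and $a_{0}(p) = -\pi_{2}(p)$, which is exactly the content of (\ref{E:3.28}). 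Here the first two are consistency checks with Lemma \ref{Lemma:3.2} and Theorem \ref{Theorem:3.3}, while the last two are what we need.

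To finish, I would substitute the explicit values $c_{1}(p) = r_{0}/(8\pi)$ from (\ref{E:3.15}) and $\pi_{0}(p) = r_{0}\bigl(-\tfrac{1}{4\pi}\ln 2 + \tfrac{1}{8\pi}\bigr)$ from (\ref{E:3.19}) into $a_{1}(p)$: the two $1/(8\pi)$-terms cancel and leave $a_{1}(p) = \frac{r_{0}}{4\pi}\ln 2$. Since this density is constant on ${\mathcal N}$, integration immediately yields $a_{1} = \frac{r_{0}\,\vol({\mathcal N})\,\ln 2}{4\pi}$. For $a_{0}$, one simply writes $a_{0} = \int_{{\mathcal N}} a_{0}(p)\,d\vol({\mathcal N}) = -\int_{{\mathcal N}} \pi_{2}(p)\,d\vol({\mathcal N})$ with $\pi_{2}(p)$ given by Theorem \ref{Theorem:3.3}. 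Assembling $P(\lambda) = a_{1}\lambda + a_{0}$ yields the claimed formula.

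There is essentially no obstacle left at this stage: all of the work is in Theorem \ref{Theorem:3.3} and in the derivations of $c_{1}(p)$, $\pi_{0}(p)$ in (\ref{E:3.15}) and (\ref{E:3.19}). The only thing worth double-checking is that the comparison of (\ref{E:1.6}) and (\ref{E:1.7}) in three dimensions produces \emph{precisely} the two identifications $a_{1}(p) = c_{1}(p) - \pi_{0}(p)$ and $a_{0}(p) = -\pi_{2}(p)$, with no contribution from intermediate $\lambda^{1/2}$, $\lambda^{-1/2}$, or logarithmic terms spoiling the polynomial form; this is exactly where the vanishing $\pi_{1} = q_{1} = 0$ from Section~3.2 (via the oddness of $r_{-2}$ in $\xi$) is used, so no further analysis is needed.
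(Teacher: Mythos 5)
Your proposal is correct and follows the paper's own route: it reads off $a_1(p) = c_1(p) - \pi_0(p)$ and $a_0(p) = -\pi_2(p)$ by matching the two large-$\lambda$ expansions in (\ref{E:3.13}) (exactly the content of (\ref{E:3.28})), then substitutes (\ref{E:3.15}) and (\ref{E:3.19}) and integrates over ${\mathcal N}$. Your added remark that the vanishing of $\pi_1 = q_1 = c_2 = 0$ guarantees no half-integer powers interfere with the polynomial form is a sensible consistency check, though it is already forced by (\ref{E:1.3}).
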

{\it Remark:} In \cite{KL3} we consider the gluing formula for the Laplacian on a warped product manifold
$M = [a,b]\times_f {\mathcal N}$, where $f:[a,b]\to {\mathbf R}$ is a positive smooth function. In that case
$r_0 =1$, $E(p)=0$, and the metric tensor is given by
\begin{eqnarray}    \label{E:5.1}
g(x,x_{3}) & = & \left( \begin{array}{clcr} f(x_{3})^{2} h_{\alpha\beta}(x) & 0 \\ 0 & 1\end{array} \right),
\end{eqnarray}
where $h=(h_{\alpha \beta} (x))$ is a metric tensor on ${\mathcal N}$. In that context let $a<c<b$. We assume
$f(c)=1$ and identify $\{c \} \times {\mathcal N}$ with ${\mathcal N}$. Corollary 3.7 of \cite{KL3} then implies
\begin{eqnarray}
P(\lambda ) &=& \frac{ \mbox{vol} ( {\mathcal N}) \cdot \ln 2} {4\pi} \lambda - \frac {\ln 2} {24 \pi} \int_{\mathcal N} \tau_{\mathcal N} (p) d \mbox{vol}
({\mathcal N})\label{E:5.2}\\
&&+\frac{ \mbox{vol} ({\mathcal N})} {4\pi} \left\{ \ln 2 \left( \frac 1 2 f'' (c) + \frac 1 4 f' (c)^2 \right)
- \left( \frac 3 {16} f' (c) ^2 + \frac 1 4 f'' (c) \right) \right\} .\nonumber
\end{eqnarray}
Using the metric tensor (\ref{E:5.1}) one can show that
$$\kappa_1 = \kappa_2 = - f' (c),$$
furthermore
$$ \tau _M (p) - \tau _{\mathcal N} (p) =  - 4 f'' (c) - 2 f' (c)^2.$$
With the information given, Corollary \ref{Corollary:3.4} is then seen to agree with (\ref{E:5.2}), confirming our previous results.

Before finishing this section, we make one observation.
If we compare the coefficients of $\ln \lambda$ in the asymptotic expansions in (\ref{E:3.13}), we see that $q_{2} = c_{3}$.
This fact is shown explicitly in Lemma \ref{Lemma:3.2} and Theorem \ref{Theorem:3.3}.
\section{The heat trace expansion of $e^{- t R(\lambda)}$ for $t \rightarrow 0^{+}$}
We begin this section with the following well known asymptotic expansion for $t \rightarrow 0^{+}$,
\begin{eqnarray}    \label{E:4.1}
\Tr e^{-t R(\lambda)} & \sim & \sum_{j=0}^{\infty} v_{j}(\lambda) t^{-m + j} + \sum_{l=1}^{\infty} w_{l}(\lambda) t^{l} \ln t,
\end{eqnarray}
\noindent
where $v_{j}(\lambda)$ for $0 \leq j \leq m$ is computed by integration of some local density $v_{j}(\lambda)(p)$ involving the homogeneous symbol of $(\mu - R(\lambda))^{-1}$
\cite{Li,PS}. As in Section 3, ultimately we will assume that $m = 3$ and that we are going to compute the densities $v_{0}(\lambda)(p)$, $v_{1}(\lambda)(p)$
and $v_{2}(\lambda)(p)$ for $p \in {\mathcal N}$
 in terms of the scalar and principal curvatures of ${\mathcal N}$ in $M$. However, at the beginning we will keep $m$ general.
Here we should mention that in this section we regard $\lambda$ as a usual constant, not as a parameter of weight $2$ as in the previous section.
Hence we should put the symbol of the tangential Laplacian as (cf. \ref{E:2.17})
\begin{eqnarray}   \label{E:4.2}
\sigma \left(D \left( x, x_{m}, \frac{\partial}{\partial x}\right) + \lambda \right) & = & {\widetilde p}_{2}(x, x_{m}, \xi) ~ + ~
{\widetilde p}_{1}(x, x_{m}, \xi) ~ + ~ {\widetilde p}_{0}(x, x_{m}, \xi, \lambda),
\end{eqnarray}
\noindent
where
\begin{eqnarray}   \label{E:4.3}
& & {\widetilde p}_{2}(x, x_{m}, \xi) ~ = ~ \sum_{\alpha, \beta = 1}^{m-1} g^{\alpha\beta}(x,  x_{m}) \xi_{\alpha} \xi_{\beta} \Id
 ~ = ~ |\xi|^{2} \Id,   \\
& & {\widetilde p}_{1}(x,  x_{m}, \xi) ~ = ~ - i
\sum_{\alpha, \beta = 1}^{m-1} \left( \frac{1}{2} g^{\alpha\beta}(x,  x_{m}) \partial_{ x_{\alpha}} \ln |g|(x,  x_{m}) +
\partial_{ x_{\alpha}} g^{\alpha\beta}(x,  x_{m}) \right) \xi_{\beta} \Id
- 2 i \sum_{\alpha, \beta = 1}^{m-1} g^{\alpha\beta} \omega_{\alpha} \xi_{\beta} ,   \nonumber \\
& & {\widetilde p}_{0}(x,  x_{m}, \xi, \lambda) ~ = ~ \lambda ~ \Id
~ - ~ \sum_{\alpha , \beta =1}^{m-1} \, g^{\alpha\beta} \left( \partial_{x_{\alpha}} \omega_{\beta} + \omega_{\alpha} \omega_{\beta} -
\sum_{\gamma =1} ^{m-1} \, \Gamma_{\alpha\beta}^{\gamma} \omega_{\gamma} \right) - E . \nonumber
\end{eqnarray}
\noindent
As in (\ref{E:2.16}) and (\ref{E:2.22}), we denote the homogeneous symbols of $Q_{x_{m}}^{\pm}(\lambda)$ and $R(\lambda)$ as
\begin{eqnarray}   \label{E:4.44}
& & \sigma \left( Q^{+}_{x_{m}}(\lambda) \right) ~ \sim ~ {\widetilde \alpha}_{1}(x, x_{m}, \xi, \lambda) ~ + ~
{\widetilde \alpha}_{0}(x, x_{m}, \xi, \lambda) ~ + ~ {\widetilde \alpha}_{-1}(x, x_{m}, \xi, \lambda) ~ + ~  \cdots,   \nonumber  \\
& & \sigma \left( Q^{-}_{x_{m}}(\lambda) \right) ~ \sim ~ {\widetilde \beta}_{1}(x, x_{m}, \xi, \lambda) ~ + ~
{\widetilde \beta}_{0}(x, x_{m}, \xi, \lambda) ~ + ~ {\widetilde \beta}_{-1}(x, x_{m}, \xi, \lambda) ~ + ~  \cdots,   \nonumber    \\
& & \sigma(R(\lambda))(x, 0, \xi, \lambda)  ~ \sim ~ {\widetilde \theta}_{1}(x, 0, \xi, \lambda)
+ {\widetilde \theta}_{0}(x, 0, \xi, \lambda) + {\widetilde \theta}_{-1}(x, 0, \xi, \lambda) + \cdots,
\end{eqnarray}
\noindent
where ${\widetilde \theta}_{j}(x, 0, \xi, \lambda) = {\widetilde \alpha}_{j}(x, 0, \xi, \lambda) +
{\widetilde \beta}_{j}(x, 0, \xi, \lambda)$.
As in Section 3, we use Lemma \ref{Lemma:2.3}, (\ref{E:4.2}) and (\ref{E:4.3}) to compute ${\widetilde \alpha}_{j}(x, x_{m}, \xi, \lambda)$ and ${\widetilde \beta}_{j}(x, x_{m}, \xi, \lambda)$, recursively.
By adding them, we compute ${\widetilde \theta}_{j}(x, 0, \xi, \lambda)$. The only difference is that $\lambda$ belongs to the
zero order term in the homogeneous symbol of the tangential Laplacian $D ( x, x_{m}, \frac{\partial}{\partial x}) + \lambda $.
For example, the first three terms are given as follows (cf.~(\ref{E:2.20}), (\ref{E:2.21}))
(we omit $\Id$ to simplify the presentation as before):
\begin{eqnarray*}
& & {\widetilde \alpha}_{1}(x, x_{m}, \xi, \lambda) = {\widetilde \beta}_{1}(x, x_{m}, \xi, \lambda) = |\xi| = \alpha_{1}(x, x_{m}, \xi, 0) = \beta_{1}(x, x_{m}, \xi, 0),  \\
& & {\widetilde \alpha}_{0}(x, x_{m}, \xi, \lambda) = \alpha_{0}(x, x_{m}, \xi, 0), \qquad
{\widetilde \beta}_{0}(x, x_{m}, \xi, \lambda) = \beta_{0}(x, x_{m}, \xi, 0),  \\
& & {\widetilde \alpha}_{-1}(x, x_{m}, \xi, \lambda) ~ = ~ \frac{1}{2 |\xi|}
\Big\{\sum_{|\omega|=2} \frac{1}{\omega !} (\partial_{\xi}^{\omega} {\widetilde \alpha}_{1} )(\partial_{x}^{\omega} {\widetilde \alpha}_{1} )+
i (\partial_{\xi} {\widetilde \alpha}_{0})( \partial_{x} {\widetilde \alpha}_{1} )+ i (\partial_{\xi} {\widetilde \alpha}_{1})( \partial_{x} {\widetilde \alpha}_{0})
\nonumber\\
& &\hspace{1.0cm} - {\widetilde \alpha}_{0}^{2} + {\widetilde p}_{0} + A(x, x_{m}) {\widetilde \alpha}_{0}
  - \partial_{x_{m}} {\widetilde \alpha}_{0} + ({\widetilde \alpha}_{0} \omega_{m} - \omega_{m} {\widetilde \alpha}_{0}) \Big\},   \\
& & {\widetilde \beta}_{-1}(x, x_{m}, \xi, \lambda) ~ = ~ \frac{1}{2 |\xi|}
\Big\{ \sum_{ | \omega | =2} \frac{1}{\omega !} (\partial_{\xi}^{\omega} {\widetilde \beta}_{1} )(\partial_{x}^{\omega} {\widetilde \beta}_{1} )+
i (\partial_{\xi} {\widetilde \beta}_{0})( \partial_{x} {\widetilde \beta}_{1} )+ i (\partial_{\xi} {\widetilde \beta}_{1})( \partial_{x} {\widetilde \beta}_{0}) \nonumber\\
& & \hspace{1.0cm} - {\widetilde \beta}_{0}^{2} + {\widetilde p}_{0} - A(x, x_{m}) {\widetilde \beta}_{0}
 + \partial_{x_{m}} {\widetilde \beta}_{0} - ({\widetilde \beta}_{0} \omega_{m} - \omega_{m} {\widetilde \beta}_{0}) \Big\}.
\end{eqnarray*}
\noindent
We put
\begin{eqnarray}   \label{E:4.5}
\sigma \left( (\mu - R(\lambda))^{-1} \right) & \sim & {\widetilde r}_{-1}(x, \xi, \lambda, \mu) +
{\widetilde r}_{-2}(x, \xi, \lambda, \mu) +
{\widetilde r}_{-3}(x, \xi, \lambda, \mu) + \cdots,
\end{eqnarray}
\noindent
where
\begin{eqnarray}   \label{E:4.6}
{\widetilde r}_{-1}(x, \xi, \lambda, \mu) & = & \left( \mu - 2 |\xi| \right)^{-1},  \\
{\widetilde r}_{-1-j}(x, \xi, \lambda, \mu) & = & \left( \mu - 2 |\xi| \right)^{-1} \sum_{k=0}^{j-1} \sum_{|\omega|+l+k=j} \frac{1}{\omega!}
\partial_{\xi}^{\omega} {\widetilde \theta}_{1-l} \cdot  D_{x}^{\omega} {\widetilde r}_{-1-k},    \nonumber
\end{eqnarray}
\noindent
which shows that the first three terms are given as follows (cf. (\ref{E:2.27})),
\begin{eqnarray}   \label{E:4.7}
{\widetilde r}_{-1} & = & \left( \mu - 2 |\xi| \right)^{-1},  \\
{\widetilde r}_{-2} & = & \left( \mu - 2 |\xi| \right)^{-1}
\left\{ \partial_{\xi} {\widetilde \theta}_{1} \cdot D_{x} {\widetilde r}_{-1} + {\widetilde \theta}_{0} \cdot {\widetilde r}_{-1} \right\}, \nonumber  \\
{\widetilde r}_{-3} & = & \left( \mu - 2 |\xi| \right)^{-1} \left\{ \sum_{ | \omega | =2} \frac{1}{\omega !} \partial_{\xi}^{\omega} {\widetilde \theta}_{1} \cdot D_{x}^{\omega} {\widetilde r}_{-1}
+ \partial_{\xi} {\widetilde \theta}_{1} \cdot D_{x} {\widetilde r}_{-2} +
\partial_{\xi} {\widetilde \theta}_{0} \cdot D_{x} {\widetilde r}_{-1} + {\widetilde \theta}_{0} \cdot {\widetilde r}_{-2}
 + {\widetilde \theta}_{-1} \cdot {\widetilde r}_{-1} \right\} \nonumber   \\
& =: & ({\widetilde \Aa}) + ({\widetilde \Bb}) +({\widetilde \Cc}) +({\widetilde \Dd}) +({\widetilde \Ee}). \nonumber
\end{eqnarray}
\noindent
Then, the density $v_{j}(\lambda)(x)$ ($j = 0, 1, 2$) computing $v_{j}(\lambda)$ in (\ref{E:4.1}) is given as \cite{Gi1,PS}
\begin{eqnarray}     \label{E:4.8}
v_{j}(\lambda)(x) ~ = ~ \frac{1}{(2 \pi)^{2}} \int_{T^{\ast}_{x}{\mathcal N}}
\frac{1}{2 \pi i} \int_{\gamma} e^{- \mu} \Tr {\widetilde r}_{-1-j} (x, \xi, \lambda, \mu) d\mu d\xi .
\end{eqnarray}
\noindent
We now use the above formula to compute $v_{j}(\lambda)(x)$ ($j = 0, 1, 2$) at $x = p \in {\mathcal N}$.
The following lemma is straightforward.
\begin{lemma}   \label{Lemma:4.33}
Let ${\mathbb C}^+ = {\mathbb C} -\{r\in {\mathbb R} | r\leq 0\}$. For $z\in {\mathbb C}^+$ let $\gamma$ be a counterclockwise contour in ${\mathbb C}^+$
with $z$ inside $\gamma$.
Then
\begin{eqnarray*}
& (1) & \frac{1}{2 \pi i} \int_{\gamma} \frac{e^{-\mu}}{\mu - z} d \mu = e^{- z}, \qquad  \frac{1}{2 \pi i} \int_{\gamma} \frac{e^{-\mu}}{(\mu - z)^{2}} d \mu = - e^{- z}, \qquad
\frac{1}{2 \pi i} \int_{\gamma} \frac{e^{-\mu}}{(\mu - z)^{3}} d \mu = \frac{1}{2} e^{- z},   \\
& (2) & \frac{1}{4 \pi^{2}} \int_{{\mathbb R}^{2}} e^{-2|\xi|} d \xi ~ = ~ \frac{1}{8 \pi}, \qquad
\frac{1}{4 \pi^{2}} \int_{{\mathbb R}^{2}} \frac{e^{-2|\xi|}}{|\xi|} d \xi ~ = ~ \frac{1}{4 \pi}, \qquad
 \frac{1}{4 \pi^{2}} \int_{{\mathbb R}^{2}} \frac{e^{-2|\xi|} \xi_{1}^{2}}{|\xi|^{2}} d \xi ~ = ~ \frac{1}{16 \pi}, \\
& & \frac{1}{4 \pi^{2}} \int_{{\mathbb R}^{2}} \frac{e^{-2|\xi|} \xi_{1}^{2}}{|\xi|^{3}} d \xi ~ = ~ \frac{1}{8 \pi}, \qquad
\frac{1}{4 \pi^{2}} \int_{{\mathbb R}^{2}} \frac{e^{-2|\xi|} \xi_{1}^{4}}{|\xi|^{5}} d \xi ~ = ~ \frac{3}{32 \pi}, \qquad\frac{1}{4 \pi^{2}} \int_{{\mathbb R}^{2}} \frac{e^{-2|\xi|} \xi_{1}^{2} \xi_{2}^{2}}{|\xi|^{5}} d \xi ~ = ~ \frac{1}{32 \pi}.
\end{eqnarray*}
\end{lemma}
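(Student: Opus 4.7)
The plan is to verify the two collections of integrals directly, treating the lemma as a bookkeeping exercise. For part~(1), I would apply the Cauchy integral formula for higher derivatives to the holomorphic function $\mu \mapsto e^{-\mu}$. Since $\gamma$ is a counterclockwise contour in $\mathbb{C}^+$ encircling $z$ and $e^{-\mu}/(\mu-z)^{k}$ has a pole of order $k$ at $\mu=z$, we get
\begin{eqnarray*}
\frac{1}{2\pi i}\int_{\gamma} \frac{e^{-\mu}}{(\mu-z)^{k}}\, d\mu & = & \frac{1}{(k-1)!}\left.\frac{d^{k-1}}{d\mu^{k-1}}e^{-\mu}\right|_{\mu=z} ~ = ~ \frac{(-1)^{k-1}}{(k-1)!}\, e^{-z},
\end{eqnarray*}
which specializes to $e^{-z}$, $-e^{-z}$, and $\tfrac{1}{2}e^{-z}$ for $k=1,2,3$ respectively.

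For part~(2), I would switch to polar coordinates $\xi_{1}=r\cos\theta$, $\xi_{2}=r\sin\theta$ on $\mathbb{R}^{2}$ and separate each integral into a radial and an angular factor. The radial factors are all of the form $\int_{0}^{\infty} r^{n} e^{-2r}\, dr = n!/2^{n+1}$, and the angular factors reduce to the elementary values
\begin{eqnarray*}
\int_{0}^{2\pi} d\theta = 2\pi, \qquad
\int_{0}^{2\pi}\cos^{2}\theta\, d\theta = \pi, \qquad
\int_{0}^{2\pi}\cos^{4}\theta\, d\theta = \tfrac{3\pi}{4}, \qquad
\int_{0}^{2\pi}\cos^{2}\theta \sin^{2}\theta\, d\theta = \tfrac{\pi}{4}.
\end{eqnarray*}
For example, $\int_{\mathbb{R}^{2}} e^{-2|\xi|} d\xi = 2\pi\cdot(1!/2^{2}) = \pi/2$, giving $\tfrac{1}{4\pi^{2}}\cdot\tfrac{\pi}{2} = \tfrac{1}{8\pi}$; and $\int_{\mathbb{R}^{2}} e^{-2|\xi|}\xi_{1}^{4}/|\xi|^{5}\, d\xi = (3\pi/4)\cdot(1!/2^{2}) = 3\pi/16$, which gives $\tfrac{3}{32\pi}$ after the $1/(4\pi^{2})$ normalization. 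The remaining four entries are obtained in the same manner by combining the appropriate $r^{n}$ with the appropriate $\cos^{j}\theta\sin^{k}\theta$.

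Since everything reduces to well-known one-variable formulas, there is no genuine obstacle; the only care needed is in tracking the factors of $|\xi|^{-1}$ versus $\xi_{1}^{2}$ when converting to polar form (so that the Jacobian $r\, dr\, d\theta$ is combined correctly with the integrand's power of $r$) and in not losing track of the overall $\tfrac{1}{4\pi^{2}}$ prefactor. Both computations are elementary, which is why the authors state the lemma as "straightforward"; its role in what follows is simply to provide the explicit closed-form evaluations needed when plugging the resolvent symbol ${\widetilde r}_{-1-j}$ into the density formula~(\ref{E:4.8}) for $v_{j}(\lambda)(p)$.
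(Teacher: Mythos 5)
Your method is correct and is exactly what the paper has in mind (the paper gives no proof, dismissing the lemma as ``straightforward''): Cauchy's formula for derivatives handles part~(1), and polar coordinates with the gamma-integral $\int_{0}^{\infty} r^{n} e^{-2r}\,dr = n!/2^{n+1}$ together with the standard angular integrals handle part~(2). All six entries of part~(2) check out under this scheme.

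There is, however, an arithmetic slip in your worked example for $\int_{\mathbb{R}^{2}} e^{-2|\xi|}\,\xi_{1}^{4}/|\xi|^{5}\, d\xi$. After converting to polar coordinates and including the Jacobian $r\,dr\,d\theta$, the radial part of the integrand is $r^{4}/r^{5}\cdot r = r^{0}$, so the radial factor is $\int_{0}^{\infty} e^{-2r}\,dr = 0!/2^{1} = \tfrac{1}{2}$, not $1!/2^{2} = \tfrac{1}{4}$ as you wrote. The unnormalized integral is therefore $(3\pi/4)\cdot(1/2) = 3\pi/8$, not $3\pi/16$; note that your stated intermediate value $3\pi/16$ is also inconsistent with the final answer you quote, since $\tfrac{1}{4\pi^{2}}\cdot\tfrac{3\pi}{16} = \tfrac{3}{64\pi}$, whereas the correct $\tfrac{1}{4\pi^{2}}\cdot\tfrac{3\pi}{8} = \tfrac{3}{32\pi}$ matches the lemma. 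This is a bookkeeping typo rather than a gap in the argument: the guiding rule you yourself state, namely to combine the powers of $|\xi|$ and $\xi_{1}$ with the Jacobian $r$ before reading off $n$, gives the right answer when applied carefully.
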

\subsection{The coefficient $v_{0}(\lambda)$}
\begin{eqnarray}   \label{E:4.9}
v_{0}(\lambda)(p) ~ = ~ \frac{r_{0}}{(2 \pi)^{2}} \int_{T^{\ast}_{p}{\mathcal N}}
\frac{1}{2 \pi i} \int_{\gamma} \frac{e^{- \mu}}{\mu - 2 |\xi|} d\mu d\xi
~ = ~ \frac{r_{0}}{(2 \pi)^{2}} \int_{T^{\ast}_{p}{\mathcal N}} e^{- 2 |\xi|} d\xi ~ = ~ \frac{r_{0}}{8 \pi},
\end{eqnarray}
\noindent
which shows that
\begin{eqnarray}    \label{E:4.10}
v_{0}(\lambda) & = & \frac{r_{0}}{8 \pi} \vol({\mathcal N}).
\end{eqnarray}
\subsection{The coefficient $v_{1}(\lambda)$}
Since ${\widetilde r}_{-2} (x, \xi, \lambda, \mu)$ is an odd function with respect to $\xi$,
$v_{1}(\lambda)(p) = 0$ and hence $v_{1}(\lambda) = 0$.
\subsection{The coefficient $v_{2}(\lambda)$}
We note that
\begin{eqnarray}   \label{E:4.12}
v_{2}(\lambda)(p) ~ = ~ \frac{1}{(2 \pi)^{2}} \int_{T^{\ast}_{p}{\mathcal N}}
\frac{1}{2 \pi i} \int_{\gamma} e^{- \mu}  \Tr {\widetilde r}_{-3} (x, \xi, \lambda, \mu)\big|_{x=p} ~  d\mu ~ d\xi,
\end{eqnarray}
\noindent
where ${\widetilde r}_{-3} (x, \xi, \lambda, \mu)|_{x=p}  = ({\widetilde \Aa}) + ({\widetilde \Bb}) +({\widetilde \Cc}) +({\widetilde \Dd}) +({\widetilde \Ee})$.
We compute $({\widetilde \Aa}) \sim ({\widetilde \Ee})$  as $(\Aa) \sim (\Ee)$ in Section 3 to obtain the following.
\begin{eqnarray*}
({\widetilde \Aa}) & = & (\mu - 2 |\xi|)^{-1} \left( \sum_{|\omega |=2} \frac{1}{\omega !} \partial_{\xi}^{\omega} {\widetilde \theta}_{1} \cdot D_{x}^{\omega} {\widetilde r}_{-1} \right)\big|_{x=p}  \\
& = & (\mu - 2 |\xi|)^{-1} \left\{ \frac{1}{2}
\sum_{\alpha =1}^2\, \partial_{\xi_{\alpha}}^{2} \left( 2 |\xi|  \right) \cdot (-1)
\partial_{x_{\alpha}}^{2} \left(\mu - 2 |\xi| \right)^{-1}
 + \partial_{\xi_{1}} \partial_{\xi_{2}} \left( 2 |\xi| \right) \cdot (-1)
\partial_{x_{1}} \partial_{x_{2}} \left(\mu - 2 |\xi|  \right)^{-1} \right\}\big|_{x=p}   \\
& \approx & \frac{- \frac{2}{3} ~ \tau_{{\mathcal N}}(p) ~ \xi_{1}^{2}}{(\mu - 2 |\xi|)^{3} |\xi|^{2} } ,
\end{eqnarray*}
\noindent
which leads to
\begin{eqnarray}    \label{E:4.13}
\frac{1}{(2 \pi)^{2}}\int_{T_{p}^{\ast}{\mathcal N}}
\frac{1}{2 \pi i} \int_{\gamma} e^{-\mu} ~ \Tr ({\widetilde \Aa}) ~ d\mu ~ d \xi & = & - r_{0} \cdot \frac{\tau_{{\mathcal N}}(p)}{48 \pi}.
\end{eqnarray}
\begin{eqnarray*}
({\widetilde \Bb}) & = & \frac{\partial_{\xi} {\widetilde \theta}_{1} \cdot D_{x} {\widetilde r}_{-2}}{\mu - 2 |\xi|}\big|_{x=p}
~ = ~ \frac{(-i) \sum_{\alpha =1}^2\, \partial_{\xi_{\alpha}} ( 2 |\xi| ) \cdot \partial_{x_{\alpha}}
\left( ( \mu - 2 |\xi|)^{-1} ( \sum_{\beta =1}^2\,
\partial_{\xi_{\beta}} {\widetilde \theta}_{1} \cdot D_{x_{\beta}} {\widetilde r}_{-1} + {\widetilde \theta}_{0} \cdot {\widetilde r}_{-1}) \right)}{\mu - 2 |\xi|}\big|_{x=p}   \\
& \approx & \frac{\frac{4}{3} \tau_{{\mathcal N}}(p) \xi_{1}^{2} - 4 \sum_{\alpha =1}^2(\partial_{x_{\alpha}} \omega_{\alpha}) \xi_{1}^{2}}{(\mu - 2 |\xi|)^{3} |\xi|^{2} } ,
\end{eqnarray*}
\noindent
which leads to
\begin{eqnarray}   \label{E:4.14}
\frac{1}{(2 \pi)^{2}}\int_{T_{p}^{\ast}{\mathcal N}}
\frac{1}{2 \pi i} \int_{\gamma} e^{-\mu} ~ \Tr ({\widetilde \Bb}) ~ d\mu ~ d \xi & = &
r_{0} \cdot \frac{\tau_{{\mathcal N}}(p)}{24 \pi}
- \frac{1}{8 \pi} \Tr \left( \sum_{\alpha =1}^2\, \partial_{x_{\alpha}} \omega_{\alpha}\big|_{x=p}  \right).
\end{eqnarray}
\begin{eqnarray}    \label{E:4.15}
({\widetilde \Cc}) & = & \frac{\sum_{\alpha =1}^2\, \partial_{\xi_{\alpha}} {\widetilde \theta}_{0} \cdot D_{x_{\alpha}} {\widetilde r}_{-1}}{\mu - 2 |\xi|}\big|_{x=p}
~ = ~ \frac{(-i) \sum_{\alpha =1}^2\, \partial_{\xi_{\alpha}} {\widetilde \theta}_{0} \cdot \partial_{x_{\alpha}}
(\mu - 2 |\xi|)^{-1}}{\mu - 2 |\xi|}\big|_{x=p}  \nonumber \\
& = & \frac{(-i) \sum_{\alpha =1}^2 \partial_{\xi_{\alpha}} {\widetilde \theta}_{0} \cdot \sum_{\gamma ,\delta =1}^2\, g^{\gamma \delta, \alpha} \xi_{\gamma} \xi_{\delta}}{(\mu - 2 |\xi|)^{3} |\xi|}\big|_{x=p}
~ = ~ 0,
\end{eqnarray}
\noindent
since $g^{\gamma \delta, \alpha}(p) = 0$.
\begin{eqnarray*}
({\widetilde \Dd}) & = & \frac{{\widetilde \theta}_{0} \cdot {\widetilde r}_{-2}}{\mu - 2 |\xi|}\big|_{x=p}  ~ = ~
\frac{{\widetilde \theta}_{0}^{2}}{(\mu - 2 |\xi|)^{3}}\big|_{x=p}  ~ \approx ~
\frac{(-4 \sum_{\alpha =1}^2\, \omega_{\alpha} \omega_{\alpha}) \xi_{1}^{2}}{(\mu - 2 |\xi|)^{3} |\xi|^{2}}\big|_{x=p}.
\end{eqnarray*}
\begin{eqnarray}    \label{E:4.17}
\frac{1}{(2 \pi)^{2}}\int_{T_{p}^{\ast}{\mathcal N}}
\frac{1}{2 \pi i} \int_{\gamma} e^{-\mu} ~ \Tr ({\widetilde \Dd}) ~ d\mu ~ d \xi & = &
- \frac{1}{8 \pi} \Tr \left( \sum_{\alpha =1}^2\, \omega_{\alpha} \omega_{\alpha}\big|_{x=p}  \right).
\end{eqnarray}
\begin{eqnarray*}
({\widetilde \Ee}) & = & \frac{{\widetilde \theta}_{-1} \cdot {\widetilde r}_{-1}}{\mu - 2 |\xi|}\big|_{x=p}  ~ = ~
\frac{{\widetilde \theta}_{-1}}{(\mu - 2 |\xi|)^{2}}\big|_{x=p}  ~ = ~
\frac{1}{2(\mu - 2 |\xi|)^{2} |\xi|} \left\{- \sum_{| \omega | =2} \frac{2}{\omega !} \partial_{\xi}^{\omega} {\widetilde \alpha}_{1} \cdot D_{x}^{\omega} {\widetilde \alpha}_{1} \right.  \\
&  &\left. - \partial_{\xi} {\widetilde \theta}_{0} \cdot D_{x} {\widetilde \alpha}_{1} - \partial_{\xi} {\widetilde \alpha}_{1} \cdot D_{x} {\widetilde \theta}_{0}
- ({\widetilde \alpha}_{0}^{2} + {\widetilde \beta}_{0}^{2}) + A(x,0) ({\widetilde \alpha}_{0} - {\widetilde \beta}_{0}) - \partial_{x_{m}} ({\widetilde \alpha}_{0} - {\widetilde \beta}_{0}) + 2 {\widetilde p}_{0} \right\}\big|_{x=p}   \\
& =: & ({\widetilde 1}) + ({\widetilde 2}) + ({\widetilde 3}) + ({\widetilde 4}) + ({\widetilde 5}) + ({\widetilde 6}) + ({\widetilde 7}).
\end{eqnarray*}
\begin{eqnarray*}
({\widetilde 1}) & = & \frac{- \sum_{ | \omega | =2} \frac{2}{\omega !} \partial_{\xi}^{\omega} {\widetilde \alpha}_{1}
\cdot D_{x}^{\omega} {\widetilde \alpha}_{1}}{2(\mu - 2 |\xi|)^{2} |\xi|}\big|_{x=p}
~ \approx \frac{\tau_{{\mathcal N}}(p)}{6} \cdot \frac{\xi_{1}^{2}}{(\mu - 2 |\xi|)^{2} |\xi|^{3}} ,
\end{eqnarray*}
\noindent
which leads to
\begin{eqnarray*}
\frac{1}{(2 \pi)^{2}}\int_{T_{p}^{\ast}{\mathcal N}}
\frac{1}{2 \pi i} \int_{\gamma} e^{-\mu} ~ \Tr ({\widetilde 1}) ~ d\mu ~ d \xi & = & - r_{0} \cdot \frac{\tau_{{\mathcal N}}(p)}{48 \pi}.
\end{eqnarray*}
\begin{eqnarray*}
({\widetilde 2}) & = & \frac{- \partial_{\xi} {\widetilde \theta}_{0} \cdot D_{x} {\widetilde \alpha}_{1}}
{2(\mu - 2 |\xi|)^{2} |\xi|}\big|_{x=p}
~ = ~ \frac{i \sum_{\alpha =1}^2\, \partial_{\xi_{\alpha}} {\widetilde \theta}_{0} \cdot \sum_{\beta ,\gamma =1}^2\, g^{\beta\gamma,\alpha} \xi_{\beta} \xi_{\gamma}}{4(\mu - 2 |\xi|)^{2} |\xi|^{2}}\big|_{x=p}
~ = ~ 0,
\end{eqnarray*}
\noindent
since $g^{\beta\gamma,\alpha}(p) = 0$.
\begin{eqnarray*}
({\widetilde 3}) & = & \frac{- \sum_{\alpha =1}^2\, \partial_{\xi_{\alpha}} {\widetilde \alpha}_{1} \cdot D_{x_{\alpha}} {\widetilde \theta}_{0}}{2(\mu - 2 |\xi|)^{2} |\xi|}\big|_{x=p}
~ \approx ~ \frac{- \frac{1}{3} \tau_{{\mathcal N}}(p) \xi_{1}^{2}}{(\mu - 2 |\xi|)^{2} |\xi|^{3}} \Id
~ + ~ \frac{(\sum_{\alpha =1}^2\, \partial_{x_{\alpha}} \omega_{\alpha}) \xi_{1}^{2} }{(\mu - 2 |\xi|)^{2} |\xi|^{3}}\big|_{x=p}
\end{eqnarray*}
\noindent
which leads to
\begin{eqnarray*}
\frac{1}{(2 \pi)^{2}}\int_{T_{p}^{\ast}{\mathcal N}}
\frac{1}{2 \pi i} \int_{\gamma} e^{-\mu} ~ \Tr ({\widetilde 3}) ~ d\mu ~ d \xi & = &
r_{0} \cdot \frac{\tau_{{\mathcal N}}(p)}{24 \pi}
- \frac{1}{8 \pi} \Tr \left( \sum_{\alpha =1}^2\, \partial_{x_{\alpha}} \omega_{\alpha}\big|_{x=p} \right).
\end{eqnarray*}
\begin{eqnarray*}
({\widetilde 4}) & = & \frac{- ({\widetilde \alpha}_{0}^{2} + {\widetilde \beta}_{0}^{2})}{2(\mu - 2 |\xi|)^{2} |\xi|}\big|_{x=p}
~ = ~ \left\{ \frac{\left( \sum_{\alpha =1}^2\, \omega_{\alpha} \xi_{\alpha} \right)^{2}}{(\mu - 2 |\xi|)^{2} |\xi|^{3}}
~ + ~ \frac{- \left( \frac{1}{2} A(x,0)^{2} - \frac{A(x,0)}{{\widetilde \alpha}_{1}} \partial_{x_{m}} {\widetilde \alpha}_{1} +
\frac{1}{2 {\widetilde \alpha}_{1}^{2}} \left( \partial_{x_{m}} {\widetilde \alpha}_{1} \right)^{2}\right)}{2(\mu - 2 |\xi|)^{2} |\xi|} \right\}\big|_{x=p}  \\
& \approx & \left\{ \frac{- H_{1}^{2}}{(\mu - 2 |\xi|)^{2} |\xi|} +
 \frac{(2H_{1}^{2}) \xi_{1}^{2} + (\sum_{\alpha =1}^2\, \omega_{\alpha} \omega_{\alpha}) \xi_{1}^{2}}{(\mu - 2 |\xi|)^{2} |\xi|^{3}}
~  - ~ \frac{(4 H_{1}^{2} - 2 H_{2}) \xi_{1}^{4} + 2 H_{2} \xi_{1}^{2} \xi_{2}^{2}}{4 (\mu - 2 |\xi|)^{2} |\xi|^{5}} \right\}\big|_{x=p} ,
\end{eqnarray*}
\noindent
which leads to
\begin{eqnarray*}
\frac{1}{(2 \pi)^{2}}\int_{T_{p}^{\ast}{\mathcal N}}
\frac{1}{2 \pi i} \int_{\gamma} e^{-\mu} ~ \Tr ({\widetilde 4}) ~ d\mu ~ d \xi
& = & r_{0} \left\{ \frac{3 H_{1}^{2}(p)}{32 \pi} - \frac{H_{2}(p)}{32 \pi} \right\}
 - \frac{1}{8 \pi} \Tr \left( \sum_{\alpha =1}^2\, \omega_{\alpha} \omega_{\alpha}\big|_{x=p} \right).
\end{eqnarray*}
\begin{eqnarray*}
({\widetilde 5}) & = & \frac{A(x,0) ({\widetilde \alpha}_{0} - {\widetilde \beta}_{0})}{2(\mu - 2 |\xi|)^{2} |\xi|}\big|_{x=p} ~ = ~
\frac{A(x,0)^{2} - \frac{A(x,0)}{{\widetilde \alpha}_{1}} \frac{\partial}{\partial _{x_m}} {\widetilde \alpha}_{1}}{2(\mu - 2 |\xi|)^{2} |\xi|}\big|_{x=p} \\
&  \approx & \left\{ \frac{ 2 H_{1}^{2}}{(\mu - 2 |\xi|)^{2} |\xi|} +
\frac{ -  2 H_{1}^{2} \xi_{1}^{2}}{(\mu - 2 |\xi|)^{2} |\xi|^{3}} \right\}\big|_{x=p} ,
\end{eqnarray*}
\noindent
which leads to
\begin{eqnarray*}
\frac{1}{(2 \pi)^{2}}\int_{T_{p}^{\ast}{\mathcal N}}
\frac{1}{2 \pi i} \int_{\gamma} e^{-\mu} ~ \Tr ({\widetilde 5}) ~ d\mu ~ d \xi
& = & - ~ r_{0} ~ \frac{H_{1}^{2}(p)}{4 \pi}.
\end{eqnarray*}
\begin{eqnarray*}
({\widetilde 6}) & = & \frac{- \partial_{x_{m}} ({\widetilde \alpha}_{0} - {\widetilde \beta}_{0})}{2(\mu - 2 |\xi|)^{2} |\xi|}\big|_{x=p} ~ = ~
\frac{- \partial_{x_{m}} \left( A(x,0) - \frac{1}{{\widetilde \alpha}_{1}} \partial_{x_{m}} {\widetilde \alpha}_{1} \right)}
{2(\mu - 2 |\xi|)^{2} |\xi|}\big|_{x=p}  \\
& \approx & \left\{ \frac{-4H_{1}^{2} + H_{2} - \frac{1}{2} (\tau_{M}(x) - \tau_{{\mathcal N}}(x))}{2(\mu - 2 |\xi|)^{2} |\xi|}
~ + ~ \frac{\left( 12 H_{1}^{2} - 5 H_{2} + \frac{1}{2} (\tau_{M}(x) - \tau_{{\mathcal N}}(x)) \right) \xi_{1}^{2}}{2(\mu - 2 |\xi|)^{2} |\xi|^{3}} \right. \\
& & \left. + ~ \frac{(-4H_{1}^{2} + 2H_{2}) \xi_{1}^{4} - 2 H_{2} \xi_{1}^{2} \xi_{2}^{2}}{(\mu - 2 |\xi|)^{2} |\xi|^{5}} \right\}\big|_{x=p} \Id,
\end{eqnarray*}
\noindent
which leads to
\begin{eqnarray*}
\frac{1}{(2 \pi)^{2}}\int_{T_{p}^{\ast}{\mathcal N}}
\frac{1}{2 \pi i} \int_{\gamma} e^{-\mu} ~ \Tr ({\widetilde 6}) ~ d\mu ~ d \xi
& = & r_{0} \left\{ \frac{H_{1}^{2}(p)}{8 \pi} + \frac{H_{2}(p)}{16 \pi}
~ + ~ \frac{1}{32 \pi}  \left( \tau_{M}(p) - \tau_{{\mathcal N}}(p) \right) \right\}.
\end{eqnarray*}
\begin{eqnarray*}
({\widetilde 7}) & = & \frac{2 {\widetilde p}_{0}}{2(\mu - 2 |\xi|)^{2} |\xi|}\big|_{x=p} ~ = ~  \frac{\lambda}{(\mu - 2 |\xi|)^{2} |\xi|}\big|_{x=p} ~ \Id
~ - ~ \frac{\sum_{\alpha , \beta =1}^2\, g^{\alpha\beta}(\partial_{x_{\alpha}} \omega_{\beta} + \omega_{\alpha} \omega_{\beta} - \sum_{\gamma =1}^2\,
\Gamma^{\gamma}_{\alpha\beta} \omega_{\gamma}) + E}{(\mu - 2 |\xi|)^{2} |\xi|}\big|_{x=p} \\
& = & \frac{\lambda}{(\mu - 2 |\xi|)^{2} |\xi|}\big|_{x=p} ~ \Id ~
- ~ \frac{\sum_{\alpha =1}^2 (\partial_{x_{\alpha}} \omega_{\alpha} + \omega_{\alpha} \omega_{\alpha} ) + E}{(\mu - 2 |\xi|)^{2} |\xi|}\big|_{x=p},
\end{eqnarray*}
\noindent
which leads to
\begin{eqnarray*}
\frac{1}{(2 \pi)^{2}}\int_{T_{p}^{\ast}{\mathcal N}} \frac{1}{2 \pi i} \int_{\gamma} e^{-\mu}  ~ \Tr ({\widetilde 7}) ~ d\mu ~ d \xi
& = & - \frac{r_{0}}{4 \pi} \lambda ~ + ~
\frac{1}{4 \pi} \Tr \left\{\sum_{\alpha =1}^2\, (\partial_{x_{\alpha}} \omega_{\alpha}\big|_{x=p} + \omega_{\alpha} \omega_{\alpha}\big|_{x=p}) + E(p) \right\}.
\end{eqnarray*}
\noindent
The sum of the above seven equalities leads to the following result.
\begin{eqnarray}   \label{E:4.16}
& & \frac{1}{(2 \pi)^{2}}\int_{T_{p}^{\ast}{\mathcal N}}
\frac{1}{2 \pi i} \int_{\gamma} e^{-\mu} ~ ({\widetilde \Ee}) ~ d\mu ~ d \xi
~ = ~ r_{0} \left\{ - \frac{1}{4 \pi} \lambda + \frac{\tau_{M}(p)}{32 \pi} - \frac{\tau_{{\mathcal N}}(p)}{96 \pi} ~ - ~ \frac{H_{1}^{2}(p)}{32 \pi}   ~ + ~ \frac{H_{2}(p)}{32 \pi} \right\}   \nonumber \\
& & + \frac{1}{8 \pi} \Tr \sum_{\alpha =1}^2\, \left( \partial_{x_{\alpha}} \omega_{\alpha}\big|_{x=p} + \omega_{\alpha} \omega_{\alpha}\big|_{x=p} \right) + \frac{1}{4 \pi} \Tr E(p).
\end{eqnarray}
\noindent
Adding up from (\ref{E:4.13}) to (\ref{E:4.16}) yields the following result, which is the second main result of this paper.
\begin{theorem}   \label{Theorem:4.1}
When ${\mathcal N}$ is a $2$-dimensional Riemannian manifold, the densities computing $v_{j}(\lambda)$ ($j = 0, 1, 2$) are given as follows.
\begin{eqnarray*}
v_{0}(\lambda)(p) & = & \frac{r_{0}}{8 \pi} ,  \qquad v_{1}(\lambda)(p) ~ = ~ 0, \\
v_{2}(\lambda)(p) & = &
r_{0} \left\{ - \frac{\lambda}{4 \pi} ~ + ~ \frac{\tau_{M}(p)}{32 \pi} ~ + ~ \frac{\tau_{{\mathcal N}}(p)}{96 \pi} ~ - ~ \frac{H_{1}^{2}(p)}{32 \pi}
~ + ~ \frac{H_{2}(p)}{32 \pi} \right\} + \frac{1}{4 \pi} \Tr E(p).
\end{eqnarray*}
Integrating these densities on ${\mathcal N}$, for $t \rightarrow 0^{+}$ we get the following result.
\begin{eqnarray*}
\Tr e^{-t R(\lambda)} & \sim & \frac{r_{0}}{8 \pi} \vol({\mathcal N}) t^{-2} + \int_{{\mathcal N}} v_{2}(\lambda)(p) d \vol({\mathcal N}) + O(t).
\end{eqnarray*}
\end{theorem}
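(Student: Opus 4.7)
The plan is to mirror the argument used for Theorem \ref{Theorem:3.3}, exploiting the parallel structure already set up in the preceding sections. The $j=0$ case reduces to a direct residue/integral computation: by (\ref{E:4.6}) we have ${\widetilde r}_{-1}=(\mu-2|\xi|)^{-1}$, and combining the first integral of part (1) with the second of part (2) of Lemma \ref{Lemma:4.33} gives $v_{0}(\lambda)(p)=r_{0}/(8\pi)$ as in (\ref{E:4.9}). For $j=1$, note that ${\widetilde r}_{-2}$ collects terms of the form $\partial_{\xi}\theta_{1}\cdot D_{x}r_{-1}$ and $\theta_{0}\cdot r_{-1}$; in the boundary normal coordinate system, both $\partial_{x_{\alpha}}(\mu-2|\xi|)^{-1}|_{x=p}$ and $\theta_{0}|_{x=p}$ are odd in $\xi$ (since $g^{\alpha\beta,\gamma}(p)=0$ and $p_{1}(p)$ is linear in $\xi$), and hence $v_{1}(\lambda)(p)=0$ by symmetry of the domain of integration.

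The bulk of the work is the $j=2$ case. I would write ${\widetilde r}_{-3}|_{x=p}=({\widetilde \Aa})+({\widetilde \Bb})+({\widetilde \Cc})+({\widetilde \Dd})+({\widetilde \Ee})$ exactly as in (\ref{E:4.7}) and treat each summand separately. Since the homogeneous symbols satisfy ${\widetilde\alpha}_{j}(x,x_{m},\xi,\lambda)=\alpha_{j}(x,x_{m},\xi,0)$ and similarly for ${\widetilde\beta}_{j}$, with the sole exception that $\lambda$ now lives in ${\widetilde p}_{0}$ rather than in ${\widetilde p}_{2}$, the computations of $({\widetilde \Aa})$--$({\widetilde \Ee})$ are essentially the same as those of $(\Aa)$--$(\Ee)$ in Section 3 but with $\sqrt{|\xi|^{2}+\lambda}$ replaced by $|\xi|$. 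In particular, $({\widetilde \Cc})=0$ and the decomposition ${\widetilde \Ee}=({\widetilde 1})+\cdots+({\widetilde 7})$ proceeds term by term using Lemma \ref{Lemma:3.1} to replace the metric jets by curvature expressions: $g^{\alpha\beta,\gamma\epsilon}(p)$ by Riemann components, $\partial_{x_{\alpha}}^{2}\ln|g|(p)$ by Ricci components, and $g_{\alpha\alpha,mm}(p)$ by the combination involving $\tau_{M},\tau_{{\mathcal N}},H_{1},H_{2}$.

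Next I would replace the parameter integrals of Section 3 by the constant-coefficient integrals of Lemma \ref{Lemma:4.33}: the combination $\frac{1}{2\pi i}\int_{\gamma}\mu^{-s}(\mu-2\sqrt{|\xi|^{2}+\lambda})^{-k}d\mu$ becomes $\frac{1}{2\pi i}\int_{\gamma}e^{-\mu}(\mu-2|\xi|)^{-k}d\mu$, which by part (1) of Lemma \ref{Lemma:4.33} evaluates to an exponential times a numerical factor, and then the $\xi$ integrals are handled by part (2). The new feature relative to Theorem \ref{Theorem:3.3} is the contribution of $\lambda$ inside ${\widetilde p}_{0}$, which produces the $-\lambda/(4\pi)$ term in $({\widetilde 7})$ via $\frac{1}{4\pi^{2}}\int_{{\mathbb R}^{2}}\frac{e^{-2|\xi|}}{|\xi|}d\xi\cdot(-1)=-\frac{1}{4\pi}$. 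Summing the seven sub-terms gives (\ref{E:4.16}); combined with the analogues of (\ref{E:4.13})--(\ref{E:4.17}), the trace terms in $\omega_{\alpha}$ telescope to zero and the curvature terms assemble into the claimed formula for $v_{2}(\lambda)(p)$. Finally, integrating over ${\mathcal N}$ and substituting into (\ref{E:4.1}) yields the displayed heat trace expansion.

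The main obstacle is not conceptual but organizational: the computation of $({\widetilde \Ee})$ requires keeping track of many scalar curvature, principal curvature, and connection-form contributions, with cancellations between the $\partial_{x_{\alpha}}\omega_{\alpha}$ and $\omega_{\alpha}\omega_{\alpha}$ terms coming from $({\widetilde \Bb})$, $({\widetilde \Dd})$, $({\widetilde 3})$, $({\widetilde 4})$, and $({\widetilde 7})$. The key simplification that prevents this from getting out of hand is that everything is evaluated at $p$ in boundary normal coordinates, so Lemma \ref{Lemma:3.1} converts all metric jets into invariants of ${\mathcal N}\hookrightarrow M$, and then Lemma \ref{Lemma:4.33} handles the remaining integrals in closed form.
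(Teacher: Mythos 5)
Your proposal mirrors the paper's proof of Theorem \ref{Theorem:4.1} essentially exactly: the same parity argument for $v_{1}$, the same decomposition of $\widetilde{r}_{-3}$ into $(\widetilde{\Aa})$--$(\widetilde{\Ee})$ with $(\widetilde{\Ee})$ further split into seven sub-terms, the same use of Lemma \ref{Lemma:3.1} to convert metric jets into curvature invariants and Lemma \ref{Lemma:4.33} to evaluate the contour and $\xi$-integrals, and the same observation that the sole new feature relative to Theorem \ref{Theorem:3.3} is the $\lambda$-term migrating into $\widetilde{p}_{0}$, which supplies $-r_{0}\lambda/(4\pi)$ via $(\widetilde{7})$, while the $\partial_{x_{\alpha}}\omega_{\alpha}$ and $\omega_{\alpha}\omega_{\alpha}$ contributions cancel across $(\widetilde{\Bb}),(\widetilde{\Dd}),(\widetilde{3}),(\widetilde{4}),(\widetilde{7})$. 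This is correct and no genuinely different route is taken.
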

\noindent
Lemma \ref{Lemma:3.2} together with (\ref{E:3.14}) leads to the following result, which is obtained in \cite{KL3} when a collar neighborhood
of ${\mathcal N}$ is a warped product manifold.
\begin{corollary}
\begin{eqnarray*}
\zeta_{R(\lambda)}(0) & = & \int_{{\mathcal N}} v_{2}(\lambda)(p) ~ d \vol({\mathcal N})  \\
& = & 2 \int_{{\mathcal N}}
\left\{ r_{0} \left( - \frac{\lambda}{8 \pi} ~ + ~ \frac{\tau_{M}(p)}{64 \pi} ~ + ~ \frac{\tau_{{\mathcal N}}(p)}{192 \pi} ~ - ~ \frac{H_{1}^{2}(p)}{64 \pi} ~ + ~ \frac{H_{2}(p)}{64 \pi} \right) + \frac{1}{8 \pi} \Tr E(p) \right\} d \vol({\mathcal N})   \\
& = & 2 \left( \zeta_{(\Delta_{M} + \lambda)}(0) - \zeta_{(\Delta_{M_{0}, D} + \lambda)}(0) \right).
\end{eqnarray*}
\end{corollary}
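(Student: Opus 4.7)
The plan is to identify $\zeta_{R(\lambda)}(0)$ with the $t^{0}$ coefficient in the small-$t$ expansion of $\Tr e^{-t R(\lambda)}$ supplied by Theorem \ref{Theorem:4.1}, and to recognise this constant independently via the difference heat trace expansion (\ref{E:3.14}) together with the densities $c_{1}(p)$ and $c_{3}(p)$ from (\ref{E:3.15}) and Lemma \ref{Lemma:3.2}. Throughout I will assume $\lambda > 0$ so that $R(\lambda)$, $\Delta_{M} + \lambda$, and $\Delta_{M_{0},D} + \lambda$ are strictly positive, which removes any kernel contribution.

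First I derive the leftmost equality. Since $R(\lambda)$ is a positive self-adjoint elliptic $\Psi$DO of order $1$ on the $2$-dimensional closed manifold $\mathcal{N}$, the standard Mellin-transform argument applied to the expansion (\ref{E:4.1}) yields $\zeta_{R(\lambda)}(0) = \int_{\mathcal{N}} v_{2}(\lambda)(p)\, d\vol(\mathcal{N})$, because $v_{2}(\lambda)$ is exactly the coefficient of $t^{0}$ in the heat trace. The second equality is a purely algebraic rearrangement of Theorem \ref{Theorem:4.1}: each summand of the density $v_{2}(\lambda)(p)$ is twice the corresponding summand inside the braces in the corollary (for instance $\tau_{M}/(32\pi) = 2\cdot \tau_{M}/(64\pi)$, and analogously for the $\lambda$, $\tau_{\mathcal{N}}$, $H_{1}^{2}$, $H_{2}$ and $\Tr E$ contributions), so the factor $2$ simply pulls out of the integral.

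The third equality requires a short computation. Starting from
\begin{eqnarray*}
\Tr\!\left( e^{-t(\Delta_{M}+\lambda)} - e^{-t(\Delta_{M_{0},D}+\lambda)} \right) & = & e^{-t\lambda}\, \Tr\!\left( e^{-t\Delta_{M}} - e^{-t\Delta_{M_{0},D}} \right),
\end{eqnarray*}
I will Taylor-expand $e^{-t\lambda}$ in $t$ and multiply by (\ref{E:3.14}), using $c_{0}=0$. The $t^{0}$ coefficient of the resulting Cauchy product receives contributions only from pairs $(k,j)$ with $k + (j-3)/2 = 0$, i.e., $(k,j) = (0,3)$ and $(k,j) = (1,1)$, producing the constant $c_{3} - \lambda\, c_{1}$. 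Since both operators are positive, the same Mellin argument identifies $\zeta_{(\Delta_{M}+\lambda)}(0) - \zeta_{(\Delta_{M_{0},D}+\lambda)}(0)$ with this same number. Substituting $c_{1}(p) = r_{0}/(8\pi)$ from (\ref{E:3.15}) and the explicit formula for $c_{3}(p)$ from Lemma \ref{Lemma:3.2} into $c_{3} - \lambda c_{1}$ reproduces precisely half of the middle expression in the corollary, closing the chain.

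Because Theorem \ref{Theorem:4.1} and Lemma \ref{Lemma:3.2} already provide all the pointwise densities, no new analytic ingredient is required; the only place requiring mild care is the combinatorial extraction of the $t^{0}$ coefficient from the product $e^{-t\lambda}\cdot \sum c_{j}\, t^{(j-3)/2}$ and the verification that the two lists of integrands match term by term. This is the main (and quite modest) obstacle.
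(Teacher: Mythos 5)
Your argument is correct and follows the same route the paper intends: identify $\zeta_{R(\lambda)}(0)$ with the $t^0$-coefficient $\int_{\mathcal N} v_2(\lambda)(p)\,d\vol(\mathcal N)$ from Theorem~\ref{Theorem:4.1}, verify the factor-of-two rewriting algebraically, and recognise the same constant as $c_3 - \lambda c_1$ by multiplying the asymptotic series (\ref{E:3.14}) by $e^{-t\lambda}$ and reading off the $t^0$-term, using (\ref{E:3.15}) and Lemma~\ref{Lemma:3.2}. The paper's own one-line justification (``Lemma~\ref{Lemma:3.2} together with~(\ref{E:3.14})'') is precisely this computation made explicit, and your care in assuming $\lambda>0$ to avoid kernel corrections is consistent with the paper's implicit convention.
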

\noindent
{\it Remark} : It is interesting to compare Theorem \ref{Theorem:4.1} with the results of \cite{PS} and  \cite{Li}.
When $(X, Y, g)$ is a compact Riemannian manifold with boundary $Y$ and ${\mathcal E}$ is the trivial line bundle,
we consider the Steklov problem as in \cite{Li} and \cite{PS}.
Then, there appears the Dirichlet-to-Neumann operator ${\mathcal D}_{DN} : C^{\infty}(Y) \rightarrow C^{\infty}(Y)$.
Roughly, this operator is a half of the Dirichlet-to-Neumann operator $R(0)$ discussed in this paper.
When $\Dim X = 3$ (and so $\Dim Y = 2$), it is known that $\Tr e^{-t {\mathcal D}_{DN}} ~ \sim ~ A_{0} t^{-2} + A_{1} t^{-1} + A_{2} + O(t)$,
where $A_{j} = \int_{Y} A_{j}(y) d\vol(Y)$, $0 \leq j \leq 2$.
It is known \cite{Li,PS} that
\begin{eqnarray*}
A_{0}(y) = \frac{1}{2 \pi}, \qquad  A_{1}(y) = \frac{H_{1}(y)}{4 \pi}, \qquad
A_{2}(y) = \frac{\tau_{X}(y)}{32 \pi} + \frac{\tau_{Y}(y)}{96 \pi} + \frac{H_{1}^{2}(y)}{16 \pi}.
\end{eqnarray*}

\end{document}